\title{Optimality Conditions and Moreau--Yosida Regularization for Almost Sure State Constraints}
\author{Caroline Geiersbach\thanks{Weierstrass Institute, 10117 Berlin, Germany 
  (\texttt{caroline.geiersbach@wias-berlin.de})}
\and Michael Hinterm\"{u}ller\thanks{Weierstrass Institute, 10117 Berlin, Germany 
  (\texttt{michael.hintermueller@wias-berlin.de}) and Humboldt-Universit\"at zu Berlin, Unter den Linden 6,10099 Berlin, Germany (\texttt{hint@math.hu-berlin.de})}}
\newcommand{\R}{\mathbb{R}}
\newcommand{\cA}{\mathcal{A}}
\newcommand{\cR}{\mathcal{R}}
\renewcommand{\u}{x_1}
\newcommand{\y}{x_2}
\newcommand{\z}{\lambda}
\newcommand{\bx}{\bar{x}}
\newcommand{\p}{\lambda_e}
\newcommand{\lami}{\lambda_i}
\newcommand{\bz}{\bar{\lambda}}
\newcommand{\E}{\mathbb{E}}
\newcommand{\pP}{\mathbb{P}}
\newcommand{\D}{\text{ d}}
\DeclareMathOperator*{\esssup}{ess\,sup}
\DeclareMathOperator*{\argmin}{arg\,min}
\DeclareMathOperator*{\argmax}{arg\,max}
\theoremstyle{definition}
\newtheorem{theorem}{Theorem}[section]
\newtheorem{lemma}[theorem]{Lemma}
\newtheorem{proposition}[theorem]{Proposition}
\newtheorem{corollary}[theorem]{Corollary}
\newtheorem{defn}[theorem]{Definition}
\newtheorem{example}[theorem]{Example}
\newtheorem{rem}[theorem]{Remark}
\newcounter{assumption}
\newtheorem{assumption}[theorem]{Assumption}
\newcounter{subassumption}[assumption]
\renewcommand{\thesubassumption}{(\textit{\roman{subassumption}})}
\renewcommand{\p@subassumption}{\theassumption}% Counter prefix.
\newcommand{\subasu}{% Just like \item in a list, but for an asu
  \refstepcounter{subassumption}%
  \thesubassumption~\ignorespaces}
 \crefname{subassumption}{Assumption}{Assumptions}
\begin{document}
\maketitle
\begin{abstract}
We analyze a potentially risk-averse convex stochastic optimization problem, where the control is deterministic and the state is a Banach-valued essentially bounded random variable. We obtain strong forms of necessary and sufficient optimality conditions for problems subject to equality and conical constraints. We propose a Moreau--Yosida regularization for the conical constraint and show consistency of the optimality conditions for the regularized problem as the regularization parameter is taken to infinity.  
\end{abstract}

\section*{Introduction}
Let $X_1$ and $X_2$ be real, reflexive, and separable Banach spaces, and let
$(\Omega, \mathcal{F}, \pP)$ denote a complete probability space, where
$\Omega$ represents the sample space, $\mathcal{F} \subset 2^{\Omega}$ is
the $\sigma$-algebra of events on the power set of $\Omega$, and
$\pP\colon \Omega \rightarrow [0,1]$ is a probability measure.
Consider a convex stochastic optimization problem of the form
\begin{equation}
 \label{eq:model-problem-abstract}
 \begin{aligned}
  & \underset{x_1,x_2(\cdot)}{\mathrm{minimize}}\quad J_1(x_1)+ \cR[J_2(x_1, x_2(\cdot); \cdot)]\\
   &\text{subject to (s.t.)}\quad\left\{\begin{aligned}x_1 &\in C, \\
   x_2(\omega) &\in X_{2,\textup{ad}}(x_1,\omega) \quad \text{a.s.},
   \end{aligned}\right.
 \end{aligned}
\end{equation}
where $C \subset X_1$ is nonempty, closed, and convex;
$X_{2,\textup{ad}}(x_1,\omega) \subset X_2$ is assumed to be nonempty, closed, and convex for all $x_1 \in C$ and almost all $\omega \in \Omega$.  In the problem \eqref{eq:model-problem-abstract}, the control $x_1$ is deterministic and the state $x_2$ is a vector-valued random variable. This is the natural paradigm if one views $x_1$ as a concrete ``decision'' that should be made while taking into account uncertainties from inputs or parameters in the state system. This paradigm differs in spirit from a two-stage stochastic optimization problem, where $x_2$ (called the second stage variable) is a recourse decision that should be made after a realization $\omega$ becomes known; see, e.g., \cite{Pflug2014, Shapiro2009} for an introduction to multistage stochastic optimization. However, the convex problem \eqref{eq:model-problem-abstract} can be shown to be equivalent to a two-stage stochastic optimization problem if $x_2(\omega)$ is almost surely contained in a bounded set; see the discussion in \cite{Rockafellar1975} and \cite[Section 3]{Rockafellar1976}. This equivalence was used by Rockafellar and Wets~\cite{Rockafellar1976,Rockafellar1976a,Rockafellar1975, Rockafellar1976b} to establish optimality conditions for two-stage stochastic optimization problems by first embedding them into a static problem of the form \eqref{eq:model-problem-abstract}.
%In the classical stochastic optimization nomenclature, $x_1$ and $x_2$ are called first and second stage variables, respectively; 

There are numerous applications ranging from economics and finance \cite{Pereira1991, Mulvey2004}, engineering \cite{Conti2008, Rockafellar2015}, and life sciences \cite{Ermoliev2019} that can be mathematically expressed as \eqref{eq:model-problem-abstract}. Often, historical information (in the form of a probability distribution) about uncertain demand, performance, or physical parameters has been collected and one would like to take this information into account while making a decision. In some settings, it is enough to optimize over these possibilities on average, in which case one works with a risk-neutral formulation, meaning the measure $\cR$ is simply the expectation, or average. However, there are many applications where the average is not desirable, and one works instead with a different choice of risk functional $\cR:L^p(\Omega) \rightarrow \R \cup \{ \infty\}$, which captures the optimizer's desired risk tolerance.

For many applications, the spaces $X_1$ and $X_2$ are Euclidean spaces, which simplifies the analysis considerably. For instance, in portfolio optimization, the decision variable may be a (finite) vector of real-valued weights representing the percentages of total cash dedicated to each asset. The second-stage variable is another vector corresponding to a redistribution of assets after new information is gathered.  However, there are some problems in which the underlying spaces are infinite-dimensional, for instance, in optimization with partial differential equations (PDEs) subject to uncertainty; see e.g., \cite{Kouri2013, Kouri2016, Alexanderian2017, Kolvenbach2018, Schillings2011, Chen2013, Geiersbach2020b, Benner2016}, and the references therein. As an example, consider the optimal control of a stationary heat source over a domain $D \subset \R^d$, where the conductivity $a=a(s,\omega)$ is uncertain, but known to follow a given probability distribution. One seeks a control $x_1$ and the resulting solution $x_2 = x_2(s,\omega)$ of the PDE, the latter of which does not exceed a threshold $\psi$ and, on average, best approximates some target distribution $y_D$. With $\alpha \geq 0$, the model is formally expressed as
\begin{equation}
\label{eq:model-PDE-UQ-state-constraints}
\tag{$\textup{P}'$}
 \begin{aligned}
&\underset{x_1,x_2(\cdot)}{\mathrm{minimize}}\quad \frac{1}{2} \E \left[ \lVert x_2 - y_D \rVert_{L^2(D)}^2 \right]+ \frac{\alpha}{2} \lVert \u \rVert_{L^2(D)}^2 \\
&\quad  \text{s.t.} \left\{\begin{aligned}
x_1 &\in C, \\
  -\nabla \cdot (a(s,\omega) \nabla x_2(s,\omega)) &=  \u(s) & \text{ on }& D \times \Omega, \\
  x_2(s,\omega) &= 0 &\text{ on }& \partial D \times \Omega,\\
  x_2(s,\omega) &\leq \psi(s,\omega) & \text{ on }& D \times \Omega,
\end{aligned}\right.
 \end{aligned}
\end{equation}
where the constraints are to hold almost everywhere in $D$ and almost surely in $\Omega$. Other examples of conical constraints will be presented in \Cref{subsection-MY-revisited}. To develop optimality conditions for this problem, a constraint qualification is needed, which implicitly requires that subsets of the given function space have a nonempty interior. Standard PDE theory (cf.~\cite{Adams2003}) gives, under mild assumptions on $a$ and $D$, a unique solution $x_2(\cdot,\omega) \in X_2:=H_0^1(D) \cap H^2(D)$ for every $x_1 \in X_1:=L^2(D)$ and $\omega \in \Omega$. Through embeddings one obtains even $x_2(\cdot,\omega) \in C(\bar{D}),$ where an interior point condition is satisfiable. However, without additional structure on $\Omega$, $x_2$ as a function in $L^p(\Omega,X_2)$ can only satisfy an interior point condition if $p=\infty$. It is notable that for a
deterministic model, i.e., where $x_2 = x_2(s)$, the Lagrange multiplier associated with the conical constraint is generally only a measure. It is standard to employ regularization in numerical procedures to circumvent this difficulty; see \cite{Hintermueller2006, Hintermueller2006a}.

In this paper, we are concerned with obtaining strong forms of optimality conditions for a problem like \eqref{eq:model-problem-abstract}, by which we mean explicit, ``almost sure''-type conditions, which are amenable to building algorithms. The main difficulty here is in applying classical duality theory, where one would expect a Lagrange multiplier to belong to the space $(L^\infty(\Omega,X))^*$, $X$ being a Banach space. A multiplier from this space cannot generally be represented as a mapping from $\Omega$ to, say, $X^*$. Hence the task of establishing optimality conditions in the strong form, i.e., with almost sure conditions, is quite challenging. Another difficulty is in the structure of the problem itself, where the optimality conditions should make explicit the first stage variable's independence of $\omega$. Luckily, insights can be found in classical literature, above all in the important works by Rockafellar and Wets~\cite{Rockafellar1976,Rockafellar1976a,Rockafellar1975, Rockafellar1976b}, who developed theory for the case $X = \R^d$. A notable recent development in the theory of optimality conditions for the case $X = \R^d$ is \cite{Dentcheva2021}: the authors studied nonlinear multistage stochastic optimization problems with general objectives (not necessarily expectation functionals) and introduced a new concept of subregular recourse. Recently, the classical theory in \cite{Rockafellar1976,Rockafellar1976a,Rockafellar1975, Rockafellar1976b} was revisited in \cite{Geiersbach2020+} with a focus on the more general vector-valued case. The authors proved the existence of integrable Lagrange multipliers under a relatively complete recourse condition (defined in \Cref{assumption:relatively-complete-recourse}). For many applications, including the problem~\eqref{eq:model-PDE-UQ-state-constraints}, this assumption is too strong. In this paper, we consider problems not satisfying the condition of relatively complete recourse; in contrast to \cite{Geiersbach2020+}, the characterization of Lagrange multipliers is more delicate. Additionally, we generalize the theory from \cite{Geiersbach2020+} to include a potentially risk-averse formulation. Finally, in \cite{Geiersbach2020+}, a regularization was proposed for a specific example and consistency of the primal problem was analyzed as the penalization parameter is taken to infinity. However, a consistency analysis for a more general problem class was missing. This theoretical gap is addressed in the current paper, where we show consistency for both primal and dual variables as the penalization parameter is taken to infinity.

Optimality conditions with an application to PDE-constrained optimization under uncertainty were first presented in \cite{Kouri2018}. While the framework in \cite{Kouri2018} allows for generally nonconvex problems, the case where the feasible set $X_{2,\textup{ad}}(\omega)$ contains conical constraints is not covered.  Our work is related to a recent paper \cite{Gahururu2021}. As part of this work, a risk-neutral model problem from optimal control is presented and the authors also propose a Moreau--Yosida regularization to solve their problem. To this end, they impose more structure on their probability space, which allows for standard arguments via embeddings. However, they do not obtain strong forms of optimality conditions, which can yield deep insight into the structure of the optimality conditions for the original problem. The framework presented in our work is general enough to cover the theory in \cite{Gahururu2021} without additional structure on the probability space. It is worth mentioning that our work is related to the recent trend to include chance constraints in PDE-constrained optimization under uncertainty \cite{Chen2020, Farshbaf-Shaker2018, Geletu2020}. These models are of interest when deviations from a hard constraint are permissible with a certain probability. Notably, the chance constraint and regularized ``almost sure'' models can be shown to be related to one another \cite{Gahururu2021}.

We proceed as follows. In \Cref{sec:preliminaries}, we present notation and technical results needed for the handling of essentially bounded Banach-valued random variables. Additionally, we recall key properties of risk functionals. In \Cref{sec:extended-KKT}, we generalize the risk-neutral framework given in \cite{Geiersbach2020+} by allowing for potentially risk-averse objective functions. Additionally, we present necessary and sufficient conditions without the assumption of relatively complete recourse. While regular (integrable) Lagrange multipliers do not exist in this case, we show that Lagrange multipliers can be split into regular parts and singular terms, which can be handled separately. In \Cref{sec:Moreau-Yosida}, we present the Moreau--Yosida regularization  of almost sure conical constraints and show that the optimality system for the regularized problem is consistent with the original problem as the penalization parameter is taken to infinity. An outlook is given in \Cref{sec:Conclusion}.

\section{Preliminaries}
\label{sec:preliminaries}
\subsection{Background and Notation}
Throughout the paper, we denote the dual space of a (real) Banach space $X$ by $X^*$ and the canonical dual pairing by $\langle \cdot, \cdot \rangle_{X^*,X}$. If $X$ is paired with another space $U$, we write $\langle \cdot,\cdot \rangle_{U,X}$ to denote their pairing. The symbols $\rightarrow$, $\rightharpoonup$, and $\rightharpoonup^*$ denote strong, weak, and weak* convergence, respectively; $A\colon X\rightrightarrows Y$ denotes a set-valued operator from $X$ to $Y$. We recall that for a proper function $h\colon X \rightarrow \R \cup \{\infty \}$, the subdifferential (in the sense of convex analysis) is the set-valued operator
\begin{equation*}
  \partial h: X \rightrightarrows X^*: x \mapsto \{ q \in X^*: \langle q, y - x \rangle_{X^*,X} + h(x) \leq h(y) \quad  \forall y \in X \}.
\end{equation*} 
The domain and epigraph of $h$ are denoted by
$\textup{dom}(h)= \{ x \in X: h(x) < \infty \}$ and $\textup{epi}(h) = \{ (x,\alpha)\in X \times \R: h(x)\leq \alpha\}$, respectively.
The sum of two sets $A$ and $B$ with $\lambda \in \R$ is given by {$A+\lambda B:=\{ a+\lambda b: a \in A, b \in B\}.$}  
A nonempty subset $K$ of $X$ is said to be a cone if for any $x \in K$, $t\geq 0$, it follows that $tx \in K$. The indicator function of a set $C$ is denoted by $\delta_C$ and is defined by $\delta_C(x) = 0$ if $x \in C$ and $\delta_C(x) = \infty$ otherwise.
For a nonempty and convex set $C \subset X$, the normal cone $N_C(x)$ at $x \in C$ is defined by
$N_C(x):= \{ z \in X^*: \langle z, y-x \rangle_{X^*,X} \leq 0 \, \forall y \in C\}.$
We set $N_C(x) := \emptyset$ if $x \notin C$. Note that $\partial \delta_C(x) = N_C(x)$ for all $x \in C$.  

For all results, the probability space $(\Omega, \mathcal{F}, \pP)$ is assumed to be complete. Given a Banach space $X$ equipped with the norm $\lVert \cdot \rVert_X$, the Bochner space $L^r(\Omega, X):=L^r(\Omega,\mathcal{F}, \pP; X)$ is the set of all (equivalence classes of) strongly measurable functions $y:\Omega \rightarrow X$ having finite norm, where the norm is given by
\begin{equation*}
\lVert y \rVert_{L^r(\Omega,X)}:= \begin{cases}
                                     (\int_\Omega \lVert y(\omega) \rVert_X^r \D \pP(\omega))^{1/r}, \quad &r < \infty\\
                                     \esssup_{\omega \in \Omega} \lVert y(\omega) \rVert_X, \quad &r=\infty
                                    \end{cases}.
\end{equation*}
If $X=\R$ we use the shorthand $L^r(\Omega)$. An $X$-valued random variable $x$ is Bochner integrable if there exists a
sequence $\{ x_n\}$ of $\pP$-simple functions $x_n\colon \Omega \rightarrow X$
such that $\lim_{n \rightarrow \infty} \int_{\Omega} \lVert x_n(\omega)-x(\omega) \rVert_X \D \pP(\omega) = 0$.
The limit of the integrals of $x_n$ gives the Bochner integral
(the expectation), i.e., 
\begin{equation*}
  \E[x]:=\int_\Omega x(\omega) \D \pP(\omega) = \lim_{n \rightarrow \infty} \int_{\Omega} x_n(\omega) \D \pP(\omega).
\end{equation*}
Clearly, this expectation is an element of $X$.

Throughout the paper, we work with parametrized functions of the type $f\colon X \times \Omega \rightarrow Y$, and write $f(x;\omega)$ for the mapping from $X$ to $Y$ for a fixed parameter $\omega \in \Omega$. 
A property is said to hold almost surely (a.s.) provided that the set (in $\Omega$) where the property does not hold is a null set. As an example, two random variables $\xi, \xi'$ are said to be equal almost surely, $\xi = \xi'$ a.s., if and only if $\pP(\{ \omega \in \Omega: \xi(\omega) \neq \xi'(\omega)\}) = 0$. Sometimes we write ``for almost every $\omega$'' to mean a.s.

\subsection{Properties of $L^\infty(\Omega,X)$}
In this section, we present some technical results that will be crucial to characterizing optimality for problems like \eqref{eq:model-problem-abstract}.
Throughout this section, let $X$ be an arbitrary (real) reflexive and separable Banach space, and let the $\sigma$-algebra of Borel sets on $X$ be denoted by $\mathcal{B}$.
A function $f\colon X\times \Omega \rightarrow \R \cup \{ \infty\}$ is called a convex integrand if $f(\cdot;\omega)$ is convex for almost every $\omega$. 
Following Rockafellar \cite{Rockafellar1971}, an integrand is called \textit{normal} if it is not identically infinity\footnote{This definition corresponds to the original definition used by Rockafellar in the Bochner space setting \cite{Rockafellar1971}. The condition ``not identically infinity'' was included in the definition of a ``normal integrand'' in early works, including \cite{Rockafellar1971, Rockafellar1975, Rockafellar1976, Rockafellar1976a, Rockafellar1976b}).  Over time, Rockafellar changed his definition to exclude this condition; he acknowledged this in \cite{Rockafellar1976c} by writing ``what previously was a normal convex integrand is now a \underline{proper} normal convex integrand.''}, it is $(\mathcal{B}\times\mathcal{F})$-measurable, and
$f(\cdot;\omega)$ is lower semicontinuous in $X$ for each $\omega \in \Omega$.
An example of a function that is normal is one that is finite everywhere and Carath\'eodory, meaning $f$ is measurable in $\omega$ for fixed $x$ and continuous in $x$ for fixed $\omega$. Normality of $f$ implies that it is superpositionally measurable, meaning $\omega \mapsto f(x(\omega);\omega)$ is measurable as long as $x:\Omega \rightarrow X$ is measurable.

For a normal convex integrand $f$, we define the integral functional on $L^\infty(\Omega,X)$ by
\begin{equation*}
I_f(x) = \int_\Omega f(x(\omega);\omega) \D \pP(\omega),
\end{equation*}
i.e., the expectation of the superposition of $f$. A continuous linear functional $v \in (L^\infty(\Omega,X))^*$ of the form
\begin{equation}
\label{eq:absolutely-continuous-functionals}
  \langle v, x\rangle_{(L^\infty(\Omega,X))^*,L^\infty(\Omega,X)} = \int_{\Omega} \langle x^*(\omega), x(\omega) \rangle_{X^*,X} \D \pP(\omega)
\end{equation}
for some $x^* \in L^1(\Omega, X^*)$ is said to be absolutely continuous. A proof of the following result is in \cite{Geiersbach2020+}. This was originally shown for 
$X=\R^d$ in \cite{Rockafellar1971a} and a similar relation was shown in the context of law invariant convex risk functions in \cite{Filipovic2012}.

\begin{lemma}
\label{lemma:duality-conjugate-functions}
Assume $f$ is a normal convex integrand and $f(x(\omega);\omega)$ is an
integrable
function of $\omega$ for every $x \in L^\infty(\Omega,X)$.
Then $I_f$ and $I_{f^*}$ are
well-defined convex functionals on $L^\infty(\Omega,X)$ and $L^1(\Omega, X^*)$,
respectively, that are conjugate to each other in the sense that
\begin{align*}
I_{f^*}(x^*) &= \sup_{x\in L^\infty(\Omega,X)} \left\lbrace \int_{\Omega} \langle x^*(\omega),x(\omega)\rangle_{X^*,X} \D \pP(\omega) - I_f(x)\right\rbrace,\\
I_{f}(x) &= \sup_{x^* \in L^1(\Omega,X^*)} \left\lbrace \int_{\Omega} \langle x^*(\omega),x(\omega)\rangle_{X^*,X}  \D \pP(\omega) - I_{f^*}(x^*)\right\rbrace.
\end{align*}
Furthermore, if $v^*$ is an absolutely continuous functional corresponding
to a function $x^* \in L^1(\Omega, X^*)$, then $I_f^*(v^*) = I_{f^*}(x^*)$,
while $I_f^*(v^*) = \infty$ for any $v^*$ that is not absolutely continuous.
\end{lemma}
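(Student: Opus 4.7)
The plan is to establish the three claims (well-definedness, the two conjugacy identities, and the characterization of $I_f^*$) in that order, following the strategy pioneered by Rockafellar for $X=\R^d$ and lifting it to the reflexive separable Banach setting by invoking measurable selection and the Radon--Nikodym property of $X^*$.

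For well-definedness, the assumption that $\omega\mapsto f(x(\omega);\omega)$ is integrable for every $x\in L^\infty(\Omega,X)$ (in particular for $x\equiv 0$) immediately gives $I_f\colon L^\infty(\Omega,X)\to\R$. For $I_{f^*}$, I would invoke the standard fact that the conjugate of a normal convex integrand on a reflexive separable Banach space is itself a normal convex integrand (joint measurability follows from a projection/selection argument, lower semicontinuity on $X^*$ from the standard Fenchel biconjugate theory since $X^{**}=X$). This ensures $\omega\mapsto f^*(x^*(\omega);\omega)$ is measurable for every strongly measurable $x^*\in L^1(\Omega,X^*)$, so $I_{f^*}(x^*)$ is well-defined in $\R\cup\{+\infty\}$.

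For the two conjugacy identities, the inequality ``$\geq$'' (from the supremum side) is the elementary direction, obtained by integrating the pointwise Fenchel--Young inequality $\langle x^*(\omega),x(\omega)\rangle_{X^*,X}-f(x(\omega);\omega)\leq f^*(x^*(\omega);\omega)$ over $\Omega$. The reverse direction is the substantive step. Fix $x^*\in L^1(\Omega,X^*)$ with $I_{f^*}(x^*)<\infty$ and $\varepsilon>0$; by a von Neumann--Aumann measurable selection (using separability of $X$ and joint measurability of $f$), there is a measurable $x_\varepsilon\colon\Omega\to X$ with $\langle x^*(\omega),x_\varepsilon(\omega)\rangle - f(x_\varepsilon(\omega);\omega)\geq \min\{f^*(x^*(\omega);\omega),1/\varepsilon\}-\varepsilon$ almost surely. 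The main obstacle is that $x_\varepsilon$ need not be essentially bounded. I would resolve this by truncation: set $x_\varepsilon^{(n)}(\omega):=x_\varepsilon(\omega)\mathbbm{1}_{\{\lVert x_\varepsilon(\omega)\rVert_X\leq n\}}$ (so $x_\varepsilon^{(n)}\in L^\infty(\Omega,X)$), then pass $n\to\infty$ using dominated convergence, controlled by the $L^1$-integrability of $\langle x^*,\cdot\rangle$ on bounded sets together with the integrability of $\omega\mapsto f(0;\omega)$. A final pass $\varepsilon\downarrow 0$ delivers the identity. The second identity is proved by the symmetric argument, using $f^{**}=f$ (guaranteed by normality and lower semicontinuity) and truncating in $X^*$-norm to keep selections inside $L^1(\Omega,X^*)$.

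The third claim splits naturally. For an absolutely continuous $v^*$ with representative $x^*\in L^1(\Omega,X^*)$, the definition \eqref{eq:absolutely-continuous-functionals} lets me rewrite the Legendre--Fenchel conjugate over $L^\infty(\Omega,X)$ as exactly the supremum in the first identity, yielding $I_f^*(v^*)=I_{f^*}(x^*)$. For the case when $v^*$ is not absolutely continuous, I would apply the Yosida--Hewitt-type decomposition $v^*=v^*_a+v^*_s$ (valid on $L^\infty(\Omega,X)$ since $X^*$ has the Radon--Nikodym property in the reflexive setting) with $v^*_s\neq 0$. Choose $\bar y\in L^\infty(\Omega,X)$ with $c:=\langle v^*_s,\bar y\rangle_{(L^\infty)^*,L^\infty}>0$. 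By the singular character of $v^*_s$, there exist measurable $A_n\subset\Omega$ with $\pP(A_n)\to 0$ and $\langle v^*_s,\bar y\mathbbm{1}_{A_n}\rangle\to c$, while absolute continuity of the Bochner integral gives $\langle v^*_a,\bar y\mathbbm{1}_{A_n}\rangle\to 0$. For any fixed $\lambda>0$, the test function $x_{n,\lambda}:=\lambda\bar y\mathbbm{1}_{A_n}\in L^\infty(\Omega,X)$ satisfies
\begin{equation*}
\langle v^*,x_{n,\lambda}\rangle\to\lambda c,\qquad I_f(x_{n,\lambda})=\int_{A_n}f(\lambda\bar y(\omega);\omega)\,\D\pP(\omega)+\int_{\Omega\setminus A_n}f(0;\omega)\,\D\pP(\omega)\to I_f(0),
\end{equation*}
the first integral vanishing in the limit by absolute continuity of the integral applied to $f(\lambda\bar y;\cdot)\in L^1(\Omega)$. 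Hence $I_f^*(v^*)\geq\lambda c-I_f(0)$ for every $\lambda>0$, and sending $\lambda\to\infty$ forces $I_f^*(v^*)=+\infty$. The delicate points throughout will be the measurable selection step and the verification that the Yosida--Hewitt splitting behaves correctly for Bochner-valued functionals; both are standard but require care in the Banach-valued setting.
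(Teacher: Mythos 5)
The paper does not prove this lemma itself --- it cites \cite{Geiersbach2020+} (and, for $X=\R^d$, \cite{Rockafellar1971a}) --- so your proposal can only be measured against the classical Rockafellar argument, which is indeed the route you take: Fenchel--Young for the easy inequality, measurable selection plus truncation for the hard one, and the Yosida--Hewitt splitting with test functions supported on shrinking sets for the singular case. The first conjugacy identity and the treatment of singular functionals are sound as sketched (in the singular case one can even take $A_n=F_n$ and get $\langle v^*_s,\bar y\mathbbm{1}_{F_n}\rangle=c$ exactly, since $\bar y\mathbbm{1}_{\Omega\setminus F_n}$ vanishes on $F_n$).

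There is, however, a genuine gap in the second conjugacy identity. You propose to truncate the near-maximizing selection $x^*_\varepsilon(\omega)\in X^*$ by setting it to $0$ where $\lVert x^*_\varepsilon(\omega)\rVert_{X^*}>n$; the resulting error term is $\int_{\{\lVert x^*_\varepsilon\rVert>n\}} f^*(0;\omega)\D\pP(\omega)$, and nothing in the hypotheses makes $f^*(0;\cdot)$ integrable. Concretely, for $f(x;\omega)=\langle a(\omega),x\rangle$ with $0\neq a\in L^1(\Omega,X^*)$ the hypotheses of the lemma hold, yet $f^*(0;\omega)=+\infty$ almost surely, so every truncated selection has $I_{f^*}=+\infty$ and the argument collapses. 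The standard repair is to first prove that $I_{f^*}$ is proper and exhibit an explicit integrable point of its domain: one shows (by a measurable-selection contradiction argument) that the blanket integrability hypothesis forces $\omega\mapsto\sup_{\lVert x\rVert_X\le r}f(x;\omega)$ to be integrable for every $r$, deduces from the resulting local Lipschitz estimate that a measurable selection $a(\omega)\in\partial f(0;\omega)$ lies in $L^1(\Omega,X^*)$ with $f^*(a(\omega);\omega)=-f(0;\omega)$, and then truncates $x^*_\varepsilon$ to $a(\cdot)$ rather than to $0$. This auxiliary step is precisely the nontrivial lemma in Rockafellar's proof that your sketch omits; without it the ``symmetric argument'' is not actually symmetric, because $0\in L^\infty(\Omega,X)$ always lies in $\mathrm{dom}(I_f)$ by hypothesis, whereas $0\in L^1(\Omega,X^*)$ need not lie in $\mathrm{dom}(I_{f^*})$.
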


\begin{rem}
\label{remark:weak-star-lsc}
Since $X$ is reflexive, it satisfies the Radon--Nikodym property; moreover, the underlying probability space is $\sigma$-finite. It follows that the dual space $(L^1(\Omega,X^*))^*$ can be identified with $L^\infty(\Omega,X)$.
Thus the mapping 
\begin{equation*}
x \mapsto \int_{\Omega} \langle x^*(\omega),x(\omega)\rangle_{X^*,X} \D \pP(\omega) = \langle x^*,x\rangle_{L^1(\Omega,X^*),L^\infty(\Omega,X)}
\end{equation*}
 is weakly* continuous and convex. Moreover, $I_{f^*}$, as the supremum over $(x,\alpha) \in \textup{epi}(I_f)$ of functionals of the form
 \begin{equation*}
 x \mapsto \langle x^*,x\rangle_{L^1(\Omega,X^*),L^\infty(\Omega,X)} - \alpha,
 \end{equation*}
is convex and weakly* lower semicontinuous. A consequence of \Cref{lemma:duality-conjugate-functions} is that in the case where $f$ is a normal convex integrand and $f(x(\omega);\omega)$ is integrable for all $x \in L^\infty(\Omega,X)$, 
we have $I_f^{**} \equiv I^*_{f^*} \equiv I_f$ and thus $I_f$ is also weakly* lower semicontinuous. Another helpful method to prove weak* lower semicontinuity of $I_f$ follows from \cite[p.~227]{Rockafellar1971}: if  $f$ is a normal convex integrand and neither $I_f$ nor $I_{f^*}$ are identically infinity, then $I_f$ and $I_{f^*}$ are conjugate to each other.
\end{rem}

In this work, we make extensive use of a decomposition of elements belonging to $(L^\infty(\Omega,X))^*$. Note first that the space of absolutely continuous functionals defined by \eqref{eq:absolutely-continuous-functionals} form a closed subspace of $(L^\infty(\Omega,X))^*$ that is isometric to $L^1(\Omega,X^*)$. This subspace has a complement consisting of singular functionals, given in the next definition.
\begin{defn}
\label{def:singular-functionals}
A functional $v^\circ \in (L^\infty(\Omega,X))^*$ is called \textit{singular (relative to $\pP$)} if there exists a sequence $\{F_n\} \subset  \mathcal{F}$ with $F_{n+1} \subset F_n$ for all $n$ and $\pP(F_n) \rightarrow 0$ as $n\rightarrow \infty$ such that $v^\circ$ is concentrated on $\{ F_n\}$, i.e., $ \langle v^\circ, x\rangle_{(L^\infty(\Omega,X))^*,L^\infty(\Omega,X)} = 0$
for all $x \in L^\infty(\Omega, X)$ that vanishes on some $F_n$. 
\end{defn}

The following Yosida--Hewitt-type decomposition result for vector measures was proven in~\cite[Appendix 1, Theorem 3]{Ioffe1972} (with a slight correction to the original proof in \cite{Levin1974}). A related result is in \cite[Theorem 4]{Uhl1971}.
\begin{theorem}[Ioffe and Levin]
\label{thm:ioffe-levin}
Each functional $v^* \in (L^\infty(\Omega,X))^*$ has a unique decomposition 
\begin{equation*}
\label{eq:decomposition-dual-space}
v^* = v + v^\circ,
\end{equation*}
where $v$ is absolutely continuous, $v^\circ$ is singular relative to
$\pP$, and 
\begin{equation*}
  \lVert v^* \rVert_{(L^\infty(\Omega,X))^*} = \lVert v \rVert_{(L^\infty(\Omega,X))^*}+\lVert v^\circ \rVert_{(L^\infty(\Omega,X))^*}.
\end{equation*}
\end{theorem}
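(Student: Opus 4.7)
The plan is to mimic the classical Yosida--Hewitt decomposition for scalar finitely additive measures, upgraded to the vector-valued setting. First, I would associate to each $v^* \in (L^\infty(\Omega,X))^*$ an $X^*$-valued finitely additive set function on $\mathcal{F}$ by setting, for each $A \in \mathcal{F}$ and $x \in X$,
\begin{equation*}
\langle \mu(A), x\rangle_{X^*,X} := \langle v^*, \mathbbm{1}_A x\rangle_{(L^\infty(\Omega,X))^*,L^\infty(\Omega,X)},
\end{equation*}
where $\mathbbm{1}_A x$ denotes $\omega \mapsto \mathbbm{1}_A(\omega)\, x$ in $L^\infty(\Omega,X)$. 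Routine estimates give $\mu(A) \in X^*$ with $\lVert \mu(A)\rVert_{X^*} \leq \lVert v^*\rVert$, finite additivity on $\mathcal{F}$, and bounded semivariation controlled by $\lVert v^*\rVert$. I would then invoke a Yosida--Hewitt-type splitting for $X^*$-valued finitely additive measures to write $\mu = \mu_a + \mu_s$, where $\mu_a$ is countably additive and absolutely continuous with respect to $\pP$, and $\mu_s$ is purely finitely additive. This is precisely the vector-measure theorem that I expect to be the main obstacle; it is the technical core provided by the Ioffe--Levin reference, and the novelty relative to the scalar Yosida--Hewitt result lies in controlling the semivariation splitting additively.

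Since $X$ is reflexive and separable, $X^*$ has the Radon--Nikodym property with respect to $\pP$, so $\mu_a$ admits a density $x^* \in L^1(\Omega, X^*)$ with $\mu_a(A) = \int_A x^*(\omega)\, \D\pP(\omega)$ for every $A \in \mathcal{F}$. Setting
\begin{equation*}
\langle v, y\rangle := \int_\Omega \langle x^*(\omega), y(\omega)\rangle_{X^*,X} \D\pP(\omega), \qquad y \in L^\infty(\Omega,X),
\end{equation*}
identifies $v$ as an absolutely continuous functional in the sense of \eqref{eq:absolutely-continuous-functionals}, and I would define $v^\circ := v^* - v$. To verify that $v^\circ$ is singular in the sense of \Cref{def:singular-functionals}, I would use the characterization that $\mu_s$ is concentrated, for every $\varepsilon > 0$, on a set of $\pP$-measure less than $\varepsilon$; approximating elements of $L^\infty(\Omega,X)$ by $\mathcal{F}$-simple $X$-valued functions transfers this concentration property from the measure level to the functional level, yielding a decreasing sequence $\{F_n\} \subset \mathcal{F}$ with $\pP(F_n) \to 0$ on which $v^\circ$ is concentrated.

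Uniqueness follows by noting that if $v_1 + v_1^\circ = v_2 + v_2^\circ$ with both summands of the prescribed type, then $w := v_1 - v_2$ is both absolutely continuous, with density $y^* \in L^1(\Omega, X^*)$, and singular; testing against $\mathbbm{1}_{F_n^c} x$ for $x \in X$ forces $y^* = 0$ on $F_n^c$, and $n \to \infty$ gives $w = 0$. For the norm identity, the inequality $\lVert v^*\rVert \leq \lVert v\rVert + \lVert v^\circ\rVert$ is immediate from $v^* = v + v^\circ$. For the reverse, given $\varepsilon > 0$ pick $y_1, y_2 \in L^\infty(\Omega,X)$ of norm at most one with $\langle v, y_1\rangle \geq \lVert v\rVert - \varepsilon$ and $\langle v^\circ, y_2\rangle \geq \lVert v^\circ\rVert - \varepsilon$. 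Taking $N$ large so that $|\langle v, \mathbbm{1}_{F_N} y_1\rangle| \leq \varepsilon$ (absolute continuity of the integral, since $\pP(F_N) \to 0$) and $|\langle v^\circ, \mathbbm{1}_{F_N^c} y_2\rangle| \leq \varepsilon$ (by concentration of $v^\circ$ on $F_N$), the test function $y := \mathbbm{1}_{F_N^c} y_1 + \mathbbm{1}_{F_N} y_2$ satisfies $\lVert y\rVert_\infty \leq 1$ and $\langle v^*, y\rangle \geq \lVert v\rVert + \lVert v^\circ \rVert - 4\varepsilon$, so letting $\varepsilon \to 0$ completes the proof.
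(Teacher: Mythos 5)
The paper does not prove this theorem: it is quoted verbatim from the cited Ioffe--Levin reference (with Uhl's vector-measure decomposition mentioned as related), so there is no in-paper argument to compare against. Your outline is the standard route to that result and is essentially sound --- pass to the finitely additive $X^*$-valued set function $\mu$, split off the purely finitely additive part, represent the countably additive $\pP$-continuous part by a density using the Radon--Nikod\'ym property of the reflexive space $X^*$, and patch disjointly supported near-norming test functions for the norm identity; your uniqueness and norm-additivity arguments are complete and correct. Two points deserve tightening. First, you claim only bounded \emph{semivariation} for $\mu$, but to invoke the Radon--Nikod\'ym property and obtain a Bochner density in $L^1(\Omega,X^*)$, as \eqref{eq:absolutely-continuous-functionals} requires, you need bounded \emph{variation}; this does hold here, because for disjoint $A_1,\dots,A_n$ and unit vectors $x_1,\dots,x_n$ the function $\sum_i \mathbbm{1}_{A_i}x_i$ still has $L^\infty(\Omega,X)$-norm at most one, whence $\sum_i\lVert\mu(A_i)\rVert_{X^*}\le\lVert v^*\rVert$, but this one-line observation should be made explicit since it is precisely where the $L^\infty$ pairing improves on general vector-measure bounds. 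Second, the splitting of an $X^*$-valued finitely additive measure of bounded variation into a countably additive $\pP$-continuous part and a purely finitely additive part --- which you correctly flag as the technical core --- is exactly the content of the theorems the paper cites, so your argument is a correct reduction of the functional-level statement to the measure-level one rather than an independent proof; that is a perfectly reasonable division of labor, and it mirrors how the paper itself treats the result.
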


Using this decomposition, it is possible to obtain a particularly useful expression for the subdifferential of integral functionals on $L^\infty(\Omega,X)$. The next result was proven in \cite[Corollary 2.7]{Geiersbach2020+}.
\begin{lemma}
\label{cor:characterization-subdifferentials}
Suppose $f$ is a normal convex integrand. Let $\bar{x} \in L^\infty(\Omega,X)$
be a point at which there exists $r >0$ and $k_r \in L^1(\Omega)$ satisfying $f(x(\omega);\omega) \leq k_r(\omega)$ as long as $\lVert x- \bx\rVert_{L^\infty(\Omega,X)} < r$. Then $v^* \in (L^\infty(\Omega, X))^*$
is an element of $\partial I_f(\bx)$ if and only if
\begin{equation}
\label{eq:subdifferential-a.s.}
x^*(\omega) \in \partial f(\bx(\omega);\omega) \quad \text{a.s.},
\end{equation}
where $x^* \in L^1(\Omega,X^*)$ corresponds to the absolutely
continuous part $v$ of $v^*$. 
Moreover,
$\partial I_f(\bx)$ can be identified with a nonempty, weakly compact
subset of $L^1(\Omega, X^*)$. In particular, $v^*$ belongs to
$\partial I_f(\bx)$ if and only if $v^\circ \equiv 0$ and $v \equiv x^*$
satisfies~\eqref{eq:subdifferential-a.s.}.
\end{lemma}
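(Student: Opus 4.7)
The plan is to combine the continuity of $I_f$ near $\bar{x}$ (from the $L^1$-dominating hypothesis) with the conjugate identity in \Cref{lemma:duality-conjugate-functions} and the Ioffe--Levin decomposition of \Cref{thm:ioffe-levin}, and then pass from a global integrated Fenchel equality to the pointwise a.s. subgradient inclusion.

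First, I would exploit the bound $f(x(\omega);\omega) \leq k_r(\omega)$ with $k_r \in L^1(\Omega)$ on the $L^\infty$-ball of radius $r$ around $\bar{x}$ to show that $I_f$ is bounded above, and hence continuous (in fact locally Lipschitz), on a norm-neighborhood of $\bar{x}$ in $L^\infty(\Omega,X)$. Standard convex analysis then gives that $\partial I_f(\bar{x})$ is nonempty and weak*-compact in $(L^\infty(\Omega,X))^*$.

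Next, take any $v^* \in \partial I_f(\bar{x})$ and decompose $v^* = v + v^\circ$ via \Cref{thm:ioffe-levin}, where $v$ corresponds to a density $x^* \in L^1(\Omega,X^*)$. The Fenchel equality
\begin{equation*}
I_f(\bar{x}) + I_f^*(v^*) = \langle v^*, \bar{x}\rangle_{(L^\infty(\Omega,X))^*,L^\infty(\Omega,X)}
\end{equation*}
forces $I_f^*(v^*) < \infty$; but by \Cref{lemma:duality-conjugate-functions}, $I_f^*$ equals $+\infty$ on every functional that is not absolutely continuous, so necessarily $v^\circ \equiv 0$ and $I_f^*(v^*) = I_{f^*}(x^*)$. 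The equality then reduces to
\begin{equation*}
\int_\Omega \bigl[f(\bar{x}(\omega);\omega) + f^*(x^*(\omega);\omega) - \langle x^*(\omega),\bar{x}(\omega)\rangle_{X^*,X}\bigr]\,\D\pP(\omega) = 0;
\end{equation*}
since the integrand is pointwise nonnegative by the Young--Fenchel inequality, it vanishes a.s., which is exactly \eqref{eq:subdifferential-a.s.}. The converse direction is immediate: integrating the pointwise subgradient inequality against an arbitrary $y \in L^\infty(\Omega,X)$ yields $v \in \partial I_f(\bar{x})$. For the weak compactness assertion, the absolutely continuous functionals form a closed subspace of $(L^\infty(\Omega,X))^*$ isometric to $L^1(\Omega,X^*)$, and, since $L^\infty(\Omega,X)$ can be identified with $(L^1(\Omega,X^*))^*$ under the hypotheses of \Cref{remark:weak-star-lsc}, the relative weak* topology on this subspace coincides with the weak topology of $L^1(\Omega,X^*)$; the weak*-compactness of $\partial I_f(\bar{x})$ therefore transfers.

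The main obstacle I anticipate is the bookkeeping around the decomposition—in particular, verifying that \Cref{lemma:duality-conjugate-functions} genuinely controls $I_f^*$ on the full topological dual $(L^\infty(\Omega,X))^*$ (so that the singular part is ruled out without ad hoc arguments about perturbations concentrated on the sets $F_n$ of \Cref{def:singular-functionals}). Extracting the pointwise a.s. condition also hinges on normality of $f$ to guarantee joint measurability of $f^*$ and thus integrability of the Young--Fenchel gap, a detail easy to overlook.
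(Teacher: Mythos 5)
Your outline follows the classical Rockafellar route (Fenchel equality $+$ pointwise Young--Fenchel gap), which is indeed the standard way to prove this statement; the paper itself does not reprove it but cites \cite[Corollary 2.7]{Geiersbach2020+}. The converse direction and the identification of weak* compactness in $(L^\infty(\Omega,X))^*$ with weak compactness in $L^1(\Omega,X^*)$ via $(L^1(\Omega,X^*))^*\cong L^\infty(\Omega,X)$ are fine.

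However, there is a genuine gap exactly at the point you flag as a mere ``obstacle'': the step ``by \Cref{lemma:duality-conjugate-functions}, $I_f^*$ equals $+\infty$ on every functional that is not absolutely continuous, so $v^\circ\equiv 0$'' is not available. \Cref{lemma:duality-conjugate-functions} assumes $f(x(\omega);\omega)$ is integrable for \emph{every} $x\in L^\infty(\Omega,X)$, i.e.\ $\textup{dom}\,I_f=L^\infty(\Omega,X)$; the present lemma only assumes an integrable upper bound on a ball around $\bx$. Under that weaker hypothesis the cited conclusion is false in general: Rockafellar's formula on $(L^\infty(\Omega,X))^*$ reads $I_f^*(v+v^\circ)=I_{f^*}(x^*)+\sup_{y\in\textup{dom}\,I_f}\langle v^\circ,y\rangle$, and the supremum is finite for nonzero $v^\circ$ whenever $\textup{dom}\,I_f$ is, say, bounded. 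So you cannot rule out the singular part by appealing to the lemma as stated. The conclusion $v^\circ\equiv 0$ does hold, but it requires one of the arguments you hoped to avoid: either (a) use the full formula above together with $B_r(\bx)\subset\textup{dom}\,I_f$, which gives $\sup_{y\in\textup{dom}\,I_f}\langle v^\circ,y\rangle\geq\langle v^\circ,\bx\rangle+r\lVert v^\circ\rVert$, and combine with the pointwise Young--Fenchel inequality to see that the Fenchel equality forces $\lVert v^\circ\rVert=0$; or (b) the direct perturbation argument on the concentration sets $F_n$ of \Cref{def:singular-functionals}: for $h$ supported on $F_n$ with $\lVert h\rVert_{L^\infty(\Omega,X)}\leq r/2$ one has
\begin{equation*}
\langle v^\circ,h\rangle \leq I_f(\bx+h)-I_f(\bx)-\langle v,h\rangle \leq \int_{F_n}\bigl(k_r(\omega)-f(\bx(\omega);\omega)\bigr)\D\pP(\omega)-\int_{F_n}\langle x^*(\omega),h(\omega)\rangle_{X^*,X}\D\pP(\omega),
\end{equation*}
and both integrals vanish as $\pP(F_n)\rightarrow 0$, so $\lVert v^\circ\rVert=0$ since $v^\circ$ is concentrated on $\{F_n\}$. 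Without one of these two repairs the forward implication is not established. A secondary, smaller point: nonemptiness and local Lipschitz continuity of $I_f$ near $\bx$ require $I_f$ to be proper (in particular $I_f(\bx)>-\infty$), which should be noted since the hypothesis only supplies an upper bound.
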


To close this section, we present some auxiliary results for operator-valued random variables that will be of use later in the paper. Let $\mathcal{L}(Y,Z)$ denote the space of all bounded linear operators from $Y$ to $Z$ (both real, reflexive, and separable Banach spaces). The set $L^\infty(\Omega,\mathcal{L}(Y,Z))$ corresponds to all (strongly measurable) operator-valued random variables $\cA \colon \Omega \rightarrow \mathcal{L}(Y,Z)$ (also known as uniformly measurable operator-valued functions) such that $\esssup_{\omega \in \Omega} \lVert \cA(\omega)\rVert_{\mathcal{L}(Y,Z)} < \infty$. 
The adjoint and inverse operators are to be understood in the ``almost sure'' sense; e.g., for $\cA$, the adjoint operator is the (strongly measurable) random
operator $\cA^*$ such that for all $(y, z^*) \in Y \times Z^*$, 
\begin{align*}
\pP(\{\omega \in \Omega: \langle z^*, \cA(\omega)y \rangle_{Z^*,Z} =  \langle  \cA^{*}(\omega)z^*,y \rangle_{Y^*,Y} \}) = 1.
\end{align*}

The next two results are proven in~\Cref{sec:linear-transformations}. In the proofs, and in other arguments throughout the paper, we make use of the fact that bounded sets in $L^\infty(\Omega,Y)$ (the dual of a separable space) equipped with the weak* topology are metrizable; the same applies for $L^\infty(\Omega,Z)$. In particular, topological and sequential notions coincide. 
\begin{lemma}
\label{lemma:regularity-operators}
Let $\cA \in L^\infty(\Omega, \mathcal{L}(Y,Z))$. Then
\begin{enumerate}
\item[(i)] The operators $\cA_1\colon L^\infty(\Omega,Y) \rightarrow L^\infty(\Omega,Z), y(\cdot) \mapsto \cA(\cdot) y(\cdot)$ and $\cA_1^* \colon (L^\infty(\Omega,Z))^* \rightarrow (L^\infty(\Omega,Y))^*$ are bounded. Additionally, $\cA_1^*$ maps singular elements of $(L^\infty(\Omega,Z))^*$ to singular elements of $(L^\infty(\Omega,Y))^*$. Moreover, $\hat{\cA}_1^*\colon L^1(\Omega,Z^*) \rightarrow L^1(\Omega,Y^*),$ $z^*(\cdot) \mapsto \cA^*(\cdot) z^*(\cdot)$ is bounded.
\item[(ii)] The operators $\cA_2\colon Y \rightarrow L^\infty(\Omega,Z), y \mapsto \cA(\cdot)y$ and $\cA_2^* \colon (L^\infty(\Omega,Z))^* \rightarrow Y^*$ are bounded. Moreover, $\hat{\cA}_2^* \colon L^1(\Omega,Z^*) \rightarrow L^1(\Omega, Y^*), z^*(\cdot) \mapsto \cA^*(\cdot) z^*(\cdot)$ is bounded.
\end{enumerate}
\end{lemma}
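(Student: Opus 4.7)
The plan is to deduce every boundedness assertion from the pointwise bound $\|\cA(\omega) y\|_Z \leq \|\cA(\omega)\|_{\mathcal{L}(Y,Z)} \|y\|_Y$ together with the essential boundedness of $\omega \mapsto \|\cA(\omega)\|_{\mathcal{L}(Y,Z)}$, verifying strong measurability of the relevant compositions by simple-function approximation.

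For part (i), I would first show that $\cA_1 y := \cA(\cdot) y(\cdot)$ is well-defined in $L^\infty(\Omega, Z)$ for every $y \in L^\infty(\Omega, Y)$. Since $\cA$ is uniformly measurable, there is a sequence of simple operator-valued functions $\cA_n \to \cA$ in operator norm a.s., and $y$ is the a.s.\ limit of simple $Y$-valued functions $y_n$; the products $\cA_n(\omega) y_n(\omega)$ are clearly strongly measurable and converge a.s.\ to $\cA(\omega) y(\omega)$ by the essential boundedness of $\cA$, which yields strong measurability of the limit. Taking essential suprema then gives $\|\cA_1 y\|_{L^\infty(\Omega, Z)} \leq \|\cA\|_{L^\infty(\Omega, \mathcal{L}(Y,Z))} \|y\|_{L^\infty(\Omega, Y)}$, and boundedness of $\cA_1^*$ is an automatic consequence by duality.

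To verify that $\cA_1^*$ preserves singularity, I would invoke \Cref{def:singular-functionals} directly: if $v^\circ$ is concentrated on a nested sequence $\{F_n\}$ with $\pP(F_n) \to 0$ and $y \in L^\infty(\Omega, Y)$ vanishes on some $F_n$, then the pointwise product $\cA_1 y$ vanishes on $F_n$ as well, so $\langle \cA_1^* v^\circ, y \rangle = \langle v^\circ, \cA_1 y \rangle = 0$, showing that $\cA_1^* v^\circ$ is concentrated on the same sequence. For $\hat{\cA}_1^*$, the identity $\|\cA^*(\omega)\|_{\mathcal{L}(Z^*, Y^*)} = \|\cA(\omega)\|_{\mathcal{L}(Y,Z)}$, valid a.s., combined with the same approximation argument applied now to $z^*$, yields
\begin{equation*}
\|\hat{\cA}_1^* z^*\|_{L^1(\Omega, Y^*)} \leq \int_\Omega \|\cA(\omega)\|_{\mathcal{L}(Y,Z)} \|z^*(\omega)\|_{Z^*} \D \pP(\omega) \leq \|\cA\|_{L^\infty(\Omega, \mathcal{L}(Y,Z))} \|z^*\|_{L^1(\Omega, Z^*)}.
\end{equation*}

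Part (ii) follows the same template but is shorter, since strong measurability of $\omega \mapsto \cA(\omega) y$ for fixed $y \in Y$ is immediate from the strong measurability of $\cA$; boundedness of $\cA_2$ and, by duality, of $\cA_2^*$ follow at once, and $\hat{\cA}_2^*$ is handled exactly as in the previous paragraph. The principal obstacle in the whole lemma is the measurability claim in part (i): establishing strong measurability of $\omega \mapsto \cA(\omega) y(\omega)$ when neither factor is simple and $\mathcal{L}(Y,Z)$ need not be separable. Uniform (as opposed to merely strong) measurability of $\cA$ is precisely what makes the simultaneous approximation of both factors possible; once that step is in place, every remaining bound reduces to the pointwise operator-norm inequality integrated against either the $L^\infty$ or $L^1$ norm.
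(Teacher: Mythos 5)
Your proposal is correct and follows essentially the same route as the paper: the pointwise operator-norm bound gives all the boundedness claims, and the singularity of $\cA_1^* v^\circ$ is verified on the same concentrating sequence $\{F_n\}$ exactly as in the paper's proof. The only difference is that you spell out the strong measurability of $\omega \mapsto \cA(\omega)y(\omega)$ via simultaneous simple-function approximation, where the paper simply cites that the product of strongly measurable functions is strongly measurable — a harmless elaboration of the same step.
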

In \Cref{lemma:regularity-operators}, the notation $\cA(\cdot) y(\cdot)$ signifies an element of $L^\infty(\Omega,Z)$ with the pointwise action $\cA(\cdot)y(\cdot):\Omega \rightarrow Z$, $\omega \mapsto \cA(\omega)y(\omega).$

\begin{corollary}
\label{corollary:affine-maps-are-weak-star-closed}
Let $\cA \in L^\infty(\Omega, \mathcal{L}(Y,Z))$. Then the operator $\cA_1\colon L^\infty(\Omega,Y) \rightarrow L^\infty(\Omega,Z), y(\cdot) \mapsto \cA(\cdot) y(\cdot)$ is weakly*-to-weakly* continuous, and the affine set $\{ y \in L^\infty(\Omega, Y)\colon \cA(\omega) y(\omega) = b(\omega) \text{ a.s.}\}$ is for all $b \in L^\infty(\Omega,Z)$ weakly* closed. Moreover, the operator $\cA_2\colon Y \rightarrow L^\infty(\Omega,Z), y \mapsto \cA(\cdot)y$ is weakly-to-weakly* continuous.
\end{corollary}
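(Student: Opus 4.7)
The plan is to recognize both $\cA_1$ and $\cA_2$ as adjoints of bounded linear operators on suitable preduals, from which weak*-to-weak* continuity follows by the standard functional-analytic fact that a bounded linear map $T\colon U_0^* \to V_0^*$ between dual spaces is weak*-to-weak* continuous if and only if $T = T_0^*$ for some bounded $T_0 \colon V_0 \to U_0$. Recall from \Cref{remark:weak-star-lsc} that the reflexivity and separability of $Y$ and $Z$ yield the identifications $L^\infty(\Omega, Y) = (L^1(\Omega, Y^*))^*$ and $L^\infty(\Omega, Z) = (L^1(\Omega, Z^*))^*$, so both operators are indeed maps between duals of separable Banach spaces.

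For $\cA_1$, I would take $T_0 := \hat{\cA}_1^*\colon L^1(\Omega, Z^*) \to L^1(\Omega, Y^*)$ provided by \Cref{lemma:regularity-operators}(i). A one-line duality computation, using the defining almost-sure relation $\langle z^*, \cA(\omega)y\rangle_{Z^*,Z} = \langle \cA^*(\omega)z^*, y\rangle_{Y^*,Y}$ under the integral, gives for all $z^* \in L^1(\Omega, Z^*)$ and $y \in L^\infty(\Omega, Y)$
\begin{equation*}
\langle z^*, \cA_1 y \rangle_{L^1(\Omega,Z^*),L^\infty(\Omega,Z)} = \int_\Omega \langle \cA^*(\omega) z^*(\omega), y(\omega) \rangle_{Y^*,Y} \D \pP(\omega) = \langle \hat{\cA}_1^* z^*, y \rangle_{L^1(\Omega,Y^*),L^\infty(\Omega,Y)},
\end{equation*}
so $\cA_1 = (\hat{\cA}_1^*)^*$ is weak*-to-weak* continuous. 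Consequently, $\{ y \in L^\infty(\Omega, Y) : \cA(\omega) y(\omega) = b(\omega)\ \text{a.s.}\} = \cA_1^{-1}(\{b\})$ is the preimage of a (Hausdorff) weak* closed singleton under a weak*-to-weak* continuous map, and thus weak* closed.

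For $\cA_2$, I would define $T_0 \colon L^1(\Omega, Z^*) \to Y^*$ by $T_0 z^* := \E[\cA^*(\cdot) z^*(\cdot)]$. This is well-defined and bounded because $\cA^*(\cdot) z^*(\cdot) \in L^1(\Omega, Y^*)$ by \Cref{lemma:regularity-operators} and the Bochner integral is a bounded linear map into $Y^*$. The same pointwise exchange of $\cA(\omega)$ with its adjoint then gives $\cA_2 = T_0^*$, with the domain of $T_0^*$ identified with $Y^{**} = Y$ via reflexivity. Since the weak and weak* topologies coincide on the reflexive space $Y$, the weak*-to-weak* continuity of $T_0^*$ is exactly the claimed weak-to-weak* continuity of $\cA_2$.

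I do not expect any substantive obstacle: the crux is simply verifying the adjoint identity $\cA_i = T_0^*$ in each case, which reduces to the pointwise adjoint relation for $\cA(\omega)$ combined with Fubini-type integrability granted by the essential boundedness of $\cA$; the rest is packaging via the general duality principle. If a subtle point arises, it will be in justifying the representation of the predual pairings on $L^\infty(\Omega,Y)$ and $L^\infty(\Omega,Z)$, but this is already supplied by \Cref{remark:weak-star-lsc}.
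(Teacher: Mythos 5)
Your proposal is correct and rests on the same key identity as the paper's proof, namely $\langle z^*, \cA_1 y\rangle_{L^1(\Omega,Z^*),L^\infty(\Omega,Z)} = \langle \hat{\cA}_1^* z^*, y\rangle_{L^1(\Omega,Y^*),L^\infty(\Omega,Y)}$ via the pointwise adjoint relation and \Cref{lemma:regularity-operators}; the paper merely verifies continuity by passing to the limit along a weakly* convergent sequence and checks membership of the limit in the affine set directly, whereas you package the same computation as ``$\cA_1$ is an adjoint, hence weak*-to-weak* continuous'' and obtain closedness as the preimage of the weak* closed singleton $\{b\}$. This is essentially the same argument, with your preimage formulation being a slightly cleaner way to conclude closedness without appealing to metrizability of bounded sets.
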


\subsection{Risk measures}
\label{subsec:risk-functionals}
As the class of problems represented by  \eqref{eq:model-problem-abstract} contains risk measures, we briefly mention some properties that will later be  used in the main results.  Let $L^p(\Omega)$ be the space of random variables with $p \in [1,\infty)$. 
Throughout the paper, $p'$ will denote the H\"older conjugate of $p$, i.e., $\tfrac{1}{p}+ \tfrac{1}{p'} = 1.$  Following \cite{Artzner1999}, a risk measure $\cR: L^p(\Omega) \rightarrow\R \cup \{ \infty\}$ is called \textit{coherent}, provided it satisfies the following conditions for $\xi,\xi' \in L^p(\Omega)$:
\begin{itemize}
\item[(R1)] \textit{Convexity}: $\cR[\lambda \xi+(1-\lambda)\xi'] \leq \lambda\cR[\xi]+ (1-\lambda)\cR[\xi'] \quad \forall \lambda \in [0, 1],$
\item[(R2)]  \textit{Monotonicity}: If $\xi \leq \xi'$ a.s., then $\cR[\xi] \leq \cR[\xi'],$
\item[(R3)] \textit{Translation equivariance}: If $c \in \R$, then $\cR[\xi+c] = \cR[\xi]+ c$,
\item[(R4)] \textit{Positive homogeneity}: If $\lambda >0$, then $\cR[\lambda \xi] = \lambda \cR[\xi].$
\end{itemize}
Coherent risk measures include, for example, $\cR[\xi] = \E[\xi]$, $\cR[\xi] =\sup \xi$, and average value-at-risk $\cR[\xi] =\text{AVaR}_\alpha[\xi]$ (often called conditional value-at-risk). However, some important risk measures do not satisfy all of these axioms. For the purposes of duality, (R1) and (R2) play a central role, and we focus on this case. If $\mathcal{R}$ satisfies (R1) and is proper and lower semicontinuous, then $\mathcal{R}$ is related to its convex conjugate $\cR^*\colon L^{p'}(\Omega) \rightarrow \R \cup \{\infty\}$ by
\begin{equation}
\label{eq:biconjugate-risk-functional}
\cR[\xi] = \sup_{\vartheta \in  \mathrm{dom}(\mathcal{R}^*)} \{ \E[\xi \vartheta] - \cR^*[\vartheta] \}.
\end{equation}
Notice that $\E[\xi\vartheta] = \langle \vartheta, \xi\rangle_{L^{p'}(\Omega),L^p(\Omega)}.$ If $\cR[\xi]$ is also finite, then the maximum is attained in \eqref{eq:biconjugate-risk-functional} with
\begin{equation}
\label{eq:subdifferentiability-expression}
\partial \cR[\xi] = \argmax_{\vartheta \in \mathrm{dom}(\cR^*)} \{ \E[\xi \vartheta] - \cR^*[\vartheta]\}.
\end{equation}
It is straightforward to show that if (R2) is satisfied, then $\vartheta \in \textup{dom}(\cR^*)$ if and only if $\vartheta \geq 0 \text{ a.s.}$ Additionally, if $\mathcal{R}\colon L^p(\Omega) \rightarrow \R$ satisfies (R1) and (R2) and is everywhere finite, then it is continuous and subdifferentiable on $L^p(\Omega)$ \cite[Proposition 6.5]{Shapiro2009}.

While we do not make use of this fact here, it is worth mentioning that the conditions (R3)-(R4) provide a useful characterization of $\mathrm{dom}(\cR^*)$ and the subdifferential. In particular, a risk measure satisfying (R1)-(R3) is equivalent to the restriction $\vartheta$ to the set of probability density functions (cf.~\cite[Theorem 6.4]{Shapiro2009}), i.e., the set
$ \mathfrak{P}= \{\vartheta \in L^{p'}(\Omega): \E[\vartheta] = 1, \vartheta \geq 0 \text{ a.s.} \}.
$
Finally, a risk measure satisfying (R1)-(R4) implies $\partial \cR[\xi] = \argmax_{\vartheta \in \mathfrak{P}} \E[\xi \vartheta].$
For a more thorough introduction to risk measures, we refer to \cite{Shapiro2009, Pflug2007}.

\section{Karush--Kuhn--Tucker (KKT) Conditions}
\label{sec:extended-KKT}
We now focus on a particular subclass of \eqref{eq:model-problem-abstract}, namely on convex problems where the set $X_{2,\text{ad}}(x_1,\omega)$ restricts $x_2(\omega)$ in the form of an equality and conical constraint. As already mentioned in the introduction, under standard assumptions, classical convex duality theory would provide the existence of Lagrange multipliers in spaces of the form $(L^\infty(\Omega,X))^*$. The formulation of pointwise optimality conditions, which we present in this section, is more delicate.

\subsection{Problem Formulation}
We first equip the problem \eqref{eq:model-problem-abstract} with more structure. For the remainder of the paper, $X_1, X_2, W,$ and $R$ are assumed to be real, reflexive, and separable Banach spaces. We define the equality and inequality constraints by the mappings $e\colon X_1\times X_2\times \Omega \rightarrow W$ and $i \colon X_1 \times X_2 \times \Omega \rightarrow R$, respectively. Given a cone $K \subset R$, the partial order $\leq_K$ is defined by
\begin{equation*}
\label{eq:partial-order}
  r \leq_K 0 \quad \Leftrightarrow \quad -r \in K,
\end{equation*} 
or equivalently, $r \geq_K 0$ if and only if $r \in K$.
With 
\begin{equation*}
x:=(x_1,x_2) \in X:=X_1 \times L^\infty(\Omega,X_2),
\end{equation*}
we define the problem
\begin{equation}
 \label{eq:model-problem-abstract-long}\tag{$\textup{P}$}
 \begin{aligned}
& \underset{x \in X}{\text{minimize}} \quad \{j(x):=J_1(x_1) +\cR[J_2(x)]\}\\
 &\text{s.t.} \quad\left\{\begin{aligned}
     x \in X_0 :=\{x \in X&\colon x_1 \in C\},\\
     e(x_1,x_2(\omega);\omega) &= 0 \text{ a.s.},\\
     i(x_1, x_2(\omega); \omega) &\leq_K 0 \text{ a.s.}
   \end{aligned}\right.
 \end{aligned}
\end{equation}

Throughout our work, we often use the notational convention, where if $x \in X$, $J_2(x)$ is a shorthand for $J_2(x_1,x_2(\cdot);\cdot)$. Similarly, if $x \in X$, we write $e(x)=e(x_1,x_2(\cdot);\cdot)$ and $i(x)=i(x_1,x_2(\cdot);\cdot)$. Below, $K$-convexity of $i(\cdot,\cdot;\omega)$ means 
\begin{equation*}
i(\lambda x_1 + (1-\lambda)\hat{x}_1, \lambda x_2 + (1-\lambda)\hat{x}_2;\omega) \leq_K \lambda i(x_1,x_2;\omega) + (1-\lambda)i(\hat{x}_1, \hat{x}_2;\omega) \quad \forall (x_1,x_2),(\hat{x}_1,\hat{x}_2) \in X_1 \times X_2.
\end{equation*}

Problem  \eqref{eq:model-problem-abstract-long} is subject to the following assumptions.
\begin{assumption}
\label{assumption:general-problem}
\subasu \label{subasu:general-i} $C \subset X_1$ is nonempty, closed, and convex and $K \subset R$ is a nonempty, closed, and convex cone. \\
\subasu \label{subasu:general-ii} $J_1$ is convex, lower semicontinuous, and finite on $X_1$.\\ 
\subasu \label{subasu:general-iii} $J_2(\cdot,\cdot;\omega)$ is continuous, convex, and finite with respect to $X_1 \times X_2$ for almost all $\omega \in \Omega$ and for every $(x_1,x_2) \in X_1\times X_2$, the functions $J_2(x_1,x_2;\cdot)$ are measurable on $\Omega$. For every $r>0$ there exists $a_r \in L^p(\Omega)$, $p \in [1,\infty)$ such that for almost all $\omega$ and any $\lVert x_1\rVert_{X_1} + \lVert x_2\rVert_{X_2} \leq r$, we have
\begin{equation}
\label{eq:growth-condition-J2}
|J_2(x_1,x_2;\omega)| \leq a_r(\omega).  
\end{equation}
\subasu \label{subasu:general-iv} With respect to $X_1 \times X_2$, $e(\cdot,\cdot;\omega)$ is continuous and linear and $i(\cdot,\cdot;\omega)$ is continuous and $K$-convex for almost all $\omega \in \Omega$.\\
\subasu \label{subasu:general-v} For every $(x_1, x_2) \in X_1 \times X_2$, the functions $e(x_1,x_2;\cdot)$ and $i(x_1, x_2; \cdot)$ are measurable on $\Omega$. For every $r>0$, there exist constants $b_{r,e} >0$ and $b_{r,i} >0$ such that for any $\lVert x_1\rVert_{X_1} + \lVert x_2\rVert_{X_2} \leq r$, we have
\begin{equation}
\label{eq:growth-condition-constraints}
  \|e(x_1,x_2;\omega)\|_{W} \le b_{r,e}, \quad \|i(x_1,x_2;\omega)\|_{R} \le b_{r,i} \quad \text{a.s.}
\end{equation}
\subasu \label{subasu:general-vi} $\mathcal{R}\colon L^p(\Omega) \rightarrow \R$ is convex, monotone, and everywhere finite. 
\end{assumption}

\begin{rem}
\label{rem:growth-conditions}
The growth condition \eqref{eq:growth-condition-J2} in combination with the Carath\'eodory property ensures that $J_{2}\colon X \rightarrow L^p(\Omega)$ is continuous; see \cite[Theorem 3.5]{Kouri2018}. Continuity in the case $p = \infty$ is possible under additional assumptions; see \cite[Theorem 3.17]{Appell1990}. The conditions \eqref{eq:growth-condition-constraints} ensure  $e(\chi_1(\cdot),\chi_2(\cdot); \cdot) \in L^\infty(\Omega,W)$ and $i(\chi_1(\cdot),\chi_2(\cdot); \cdot) \in L^\infty(\Omega,R)$ for all $(\chi_1,\chi_2) \in L^\infty(\Omega,X_1) \times L^\infty(\Omega,X_2)$. This structure is used in \Cref{thm:extended-KKT-conditions} in connection with the nonanticipativity constraint.
\end{rem}
\begin{rem}
\label{remark:weak-star-lsc-2}
Suppose $h\colon X_1 \times X_2 \times \Omega \rightarrow \R$ is a normal convex integrand and $h(x_1,x_2(\omega);\omega)$ is an integrable function of $\omega$ for every $x \in X$. Then $x \mapsto I_h(x)$ is a weakly* lower semicontinuous mapping on $X$. This can be seen by the pairing $X$ with $X':=X_1^* \times L^1(\Omega,X_2^*)$  and arguing that $I_h$ and $I_{h^*}$ are well-defined on $X$ and $X'$, respectively. These functionals are conjugate to each other as in \Cref{lemma:duality-conjugate-functions} with
\begin{align*}
I_{h^*}(x') = \sup_{x \in X} \{ \langle x', x\rangle_{X',X} - I_h(x)\} \quad \text{and}\quad I_h(x) &= \sup_{x' \in X'} \{  \langle x', x\rangle_{X',X} - I_{h^*}(x')\}.
\end{align*}
This gives weak* lower semicontinuity using the arguments in \Cref{remark:weak-star-lsc}. Here, the weak* topology coincides with the weak topology on (the reflexive space) $X_1$, so one even has weak-weak* lower semicontinuity of $I_h$, i.e., weak with respect to $X_1$ and weak* with respect to  $L^\infty(\Omega,X_2)$. To simplify the discussion, we will not make the distinction between weak and weak* with respect to $x_1$.
\end{rem}

The following result is an adaptation of \cite[Proposition 3.8]{Kouri2018}.

\begin{proposition}
\label{prop:weak-weak*-lsc}
Suppose \Cref{subasu:general-iii} holds and $\mathcal{R}\colon L^p(\Omega) \rightarrow \R\cup \{ \infty\}$ is convex and monotone. If $\cR$ is finite and continuous at $J_2(x)$, then $\mathcal{R} \circ J_{2}$ is weakly* lower semicontinuous at $x$.
\end{proposition}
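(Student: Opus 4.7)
The plan is to reduce the weak* lower semicontinuity of $\cR\circ J_2$ to the weak* lower semicontinuity of integral functionals (as established in \Cref{remark:weak-star-lsc-2}) by invoking the biconjugate representation of $\cR$.

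First, I would argue that $\cR$ admits the Fenchel--Moreau representation. Since $\cR$ is convex and finite and continuous at $J_2(x)$, the point $J_2(x)$ lies in the interior of $\textup{dom}(\cR)$, and $\cR$ is bounded above on a neighborhood of $J_2(x)$. A convex function on a Banach space that is continuous at an interior point of its domain is lower semicontinuous on the closure of its domain, so $\cR$ is proper, convex, and lower semicontinuous on $L^p(\Omega)$. By \eqref{eq:biconjugate-risk-functional}, for every $\xi \in L^p(\Omega)$,
\begin{equation*}
\cR[\xi] = \sup_{\vartheta \in \textup{dom}(\cR^*)}\bigl\{\langle \vartheta,\xi\rangle_{L^{p'}(\Omega),L^p(\Omega)} - \cR^*[\vartheta]\bigr\}.
\end{equation*}
Monotonicity of $\cR$ yields $\vartheta \geq 0$ a.s.\ for every $\vartheta \in \textup{dom}(\cR^*)$.

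Next, for fixed $\vartheta \in \textup{dom}(\cR^*)$, I would introduce the integrand
\begin{equation*}
h_\vartheta(x_1,x_2;\omega) := \vartheta(\omega)\, J_2(x_1,x_2;\omega).
\end{equation*}
Because $\vartheta(\omega) \geq 0$ a.s.\ and $J_2(\cdot,\cdot;\omega)$ is convex by \Cref{subasu:general-iii}, $h_\vartheta(\cdot,\cdot;\omega)$ is convex; it is also Carath\'eodory (continuous in $(x_1,x_2)$, measurable in $\omega$), hence a normal convex integrand. For any $x \in X$, the growth bound \eqref{eq:growth-condition-J2} gives $J_2(x_1,x_2(\cdot);\cdot) \in L^p(\Omega)$, so H\"older's inequality with $\vartheta \in L^{p'}(\Omega)$ implies that $h_\vartheta(x_1,x_2(\omega);\omega)$ is integrable. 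Therefore, by \Cref{remark:weak-star-lsc-2}, the integral functional
\begin{equation*}
I_{h_\vartheta}(x) = \E[\vartheta\, J_2(x)]
\end{equation*}
is weakly*-lower semicontinuous on $X$ (weak with respect to $X_1$ and weak* with respect to $L^\infty(\Omega,X_2)$).

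Finally, combining these two pieces, one has the pointwise identity
\begin{equation*}
\cR[J_2(x)] = \sup_{\vartheta \in \textup{dom}(\cR^*)}\bigl\{I_{h_\vartheta}(x) - \cR^*[\vartheta]\bigr\},
\end{equation*}
which exhibits $\cR\circ J_2$ as a pointwise supremum of weakly*-lower semicontinuous functions (each shifted by a constant). The supremum of weakly*-lower semicontinuous functionals is weakly*-lower semicontinuous, and the result follows. The main technical obstacle is ensuring the Fenchel--Moreau biconjugate identity under only local (rather than global) continuity of $\cR$, which is handled by observing that a convex function continuous at an interior point of its domain is automatically lower semicontinuous on the closure of the domain; the rest is bookkeeping with H\"older's inequality and the integrability results already recorded.
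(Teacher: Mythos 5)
Your overall strategy is the same as the paper's: represent $\cR$ through its conjugate, observe that for each $\vartheta \in \textup{dom}(\cR^*)$ the weighted integrand $\vartheta J_2$ is a normal convex integrand whose integral functional is weakly* lower semicontinuous by \Cref{remark:weak-star-lsc-2}, and pass this semicontinuity through the dual representation. The integrability and normality checks (monotonicity giving $\vartheta \geq 0$ a.s., the growth bound \eqref{eq:growth-condition-J2} plus H\"older) match the paper exactly.

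There is, however, one genuine flaw in how you justify the dual representation. The auxiliary claim that a convex function continuous at an interior point of its domain is ``lower semicontinuous on the closure of its domain'' is false (take $f = 0$ on $(0,1)$, $f(0)=f(1)=1$, $f=\infty$ elsewhere: convex, continuous on the interior of its domain, not lsc at the endpoints). Hence you have not established $\cR = \cR^{**}$ globally, and the identity $\cR[J_2(\cdot)] = \sup_\vartheta\{I_{h_\vartheta}(\cdot) - \cR^*[\vartheta]\}$ is not justified at points other than the fixed $x$. Fortunately the global identity is not needed: Fenchel--Young gives $\cR[\xi] \geq \E[\xi\vartheta] - \cR^*[\vartheta]$ for \emph{every} $\xi$ and $\vartheta$, and at the single point $J_2(x)$ the equality $\cR[J_2(x)] = \E[J_2(x)\bar\vartheta] - \cR^*[\bar\vartheta]$ holds for any $\bar\vartheta \in \partial\cR[J_2(x)]$, which is nonempty because $\cR$ is finite and continuous there. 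Thus for $x^n \rightharpoonup^* x$ one gets $\liminf_n \cR[J_2(x^n)] \geq \liminf_n \{\E[J_2(x^n)\bar\vartheta] - \cR^*[\bar\vartheta]\} \geq \E[J_2(x)\bar\vartheta] - \cR^*[\bar\vartheta] = \cR[J_2(x)]$ using the weak* lower semicontinuity of the single functional $I_{h_{\bar\vartheta}}$. This is precisely the paper's argument via \eqref{eq:subdifferentiability-expression}: one fixed subgradient suffices, and the supremum-of-lsc-functions step (together with the shaky global biconjugacy it rests on) can be dispensed with entirely.
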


\begin{proof}
Let $f(x_1,x_2;\omega):= J_{2}(x_1,x_2; \omega) \vartheta(\omega)$, where $\vartheta \in \textup{dom}(\cR^*) \subset L^{p'}(\Omega)$. This function is clearly a normal convex integrand since $\vartheta \geq 0$ a.s.~by monotonicity of $\cR$. Moreover \eqref{eq:growth-condition-J2} in combination with $\vartheta \in L^{p'}(\Omega)$ imply that $f$ is integrable on $X$.
Weak* lower semicontinuity of $I_f$ on $X$ follows by \Cref{remark:weak-star-lsc-2}.

Now, suppose $\{x^n \} \subset X$ is a sequence such that $x^n \rightharpoonup^* x$. Then
\begin{equation}
\label{eq:inequality-risk-subdifferential}
\begin{aligned}
\cR[{J}_{2}(x^n)] &= \sup_{\vartheta \in \textup{dom}(\cR^*)} \{ \E[ {J}_{2}(x^n) \vartheta] - \cR^* [\vartheta]\} \geq \E[ {J}_{2}(x^n)\theta] - \cR^* [\theta]
\end{aligned}
\end{equation}
for any $\theta \in \textup{dom}(\cR^*)$. Since $\cR$ is finite and continuous at ${J}_{2}(x)$, it is subdifferentiable there (cf.~\cite[Proposition 7.74]{Shapiro2009}), so \eqref{eq:inequality-risk-subdifferential} holds for $\theta \in \partial \cR[J_{2}(x)].$ With \eqref{eq:subdifferentiability-expression} and weak* lower semicontinuity of $I_f$ on $X$, weak* lower semicontinuity of $\cR \circ J_{2}$ at $x$ follows.
\end{proof}

As a result of \Cref{prop:weak-weak*-lsc}, \Cref{subasu:general-iii}, and \Cref{subasu:general-vi} imply weak* lower semicontinuity of $\cR \circ J_2$ on $X$.

\subsection{Definition of Lagrangians}
To obtain pointwise optimality conditions, it will be advantageous to use the decomposition afforded by \Cref{thm:ioffe-levin}. This structure will be used to distinguish between the extended (standard) Lagrangian associated with the natural pairing and the Lagrangian associated with the weak* pairing.
We define the sets $\Lambda :=L^1(\Omega,W^*) \times L^1(\Omega,R^*)$ and $\Lambda^\circ := \{ \z^\circ = (\p^\circ,\lambda_i^\circ) \in \mathcal{S}_e \times \mathcal{S}_i\},$
where $\mathcal{S}_e$ and $\mathcal{S}_i$ denote the sets of singular functionals (as in~\Cref{def:singular-functionals}) defined on $L^\infty(\Omega,W)$ and $L^\infty(\Omega,R)$, respectively. We define the dual cones
\begin{equation*}
\begin{aligned}
K^\oplus &:= \{  r^* \in R^*: \langle r^*,r \rangle_{R^*,R} \geq 0 \quad \forall r \in K \},\\
\mathcal{K}^\oplus &:= \{\lambda_i^\circ \in \mathcal{S}_i: \langle \lambda_i^\circ, y \rangle_{(L^\infty(\Omega,R))^*,L^\infty(\Omega,R)} \geq 0 \quad \forall y \in L^\infty(\Omega,R) \text{ satisfying } y(\omega)\in K \text{ a.s.}\}.
%& \hspace{60mm} .
\end{aligned}
\end{equation*}

Now, we define the admissible sets
$\Lambda_0 =\{ \lambda=(\p,\lambda_i) \in \Lambda \colon \lambda_i(\omega) \in K^\oplus \text{ a.s.}\}$ and $\Lambda_0^\circ = \{ \z^\circ = (\p^\circ,\lambda_i^\circ) \in \Lambda^\circ \colon  \lambda_i^\circ \in \mathcal{K}^\oplus \}$ corresponding to the associated dual problem.

Finally, we define the \textit{extended Lagrangian} over $X_0 \times \Lambda_0 \times \Lambda_0^\circ$ by
\begin{equation}
\label{eq:Lagrangian-extended}
\bar{L}(x,\z,\z^\circ)= L(x,\lambda)+ L^\circ(x,\z^\circ),
\end{equation}
where
\begin{align*}
L(x,\lambda)& := j(x)+\langle \p, e(x) \rangle_{L^1(\Omega,W^*), L^\infty(\Omega,W)} +\langle \lambda_i, i(x) \rangle_{L^1(\Omega,R^*), L^\infty(\Omega,R)} \\
& \phantom{:}=j(x)+ \E[\langle \p, e(x) \rangle_{W^*,W}+\langle \lambda_i, i(x) \rangle_{R^*,R}]
\end{align*}
is called the \textit{Lagrangian} and
\begin{align*}
  L^\circ(x,\z^\circ) &:=\langle \p^\circ, e(x)\rangle_{(L^\infty(\Omega,W))^*,L^\infty(\Omega,W)} +\langle \lambda_i^\circ, i(x)\rangle_{(L^\infty(\Omega,R))^*,L^\infty(\Omega,R)}.
\end{align*}
By convention, we set $\bar{L}(x,\lambda,\lambda^\circ) = -\infty$ if $x \in X_0$ but $(\lambda,\lambda^\circ) \not\in \Lambda_0 \times \Lambda_0^\circ$, and $\bar{L}(x,\lambda,\lambda^\circ) = \infty$ if $x \not\in X_0$.

By \Cref{thm:ioffe-levin}, $\lambda_e$ and $\lambda_i$ can be identified with elements $\lambda_e^a$ and $\lambda_i^a$ of the spaces $(L^\infty(\Omega,W))^*$ and $(L^\infty(\Omega,R))^*$, respectively. Hence \eqref{eq:Lagrangian-extended} simply corresponds to the standard Lagrangian when 
\begin{equation*}
U:=L^\infty(\Omega,W)\times L^\infty(\Omega,R)
\end{equation*}
is paired with its dual $U^*$.

We will now present the conditions under which saddle points to the extended Lagrangian $\bar{L}$ exist. To that end, let $u=(u_e,u_i) $
be a point in a neighborhood of zero in $U$. We define the perturbed admissible set 
\begin{align*}
F_{\text{ad}, u}&:=\{x \in X \colon x_1 \in C,e(x_1,x_2(\omega);\omega) = u_e(\omega) \text{ a.s.}, i(x_1, x_2(\omega); \omega) \leq_K u_i(\omega) \text{ a.s.} \} 
\end{align*}
and the value function
\begin{equation*}
v(u):=\inf_{x \in X} j(x) + \delta_{F_{\textup{ad},u}}(x).
\end{equation*}

For the existence of saddle points, we impose the following standard assumption.
\begin{assumption}
\label{assumption:existence-saddle-points}
\setcounter{subassumption}{0}
\subasu \label{subasu:existence-saddle-points-i}
$F_{\text{ad}, u}$ is bounded for all $u$ in a neighborhood of zero or $j$ is radially unbounded, meaning $\lVert x \rVert_X \rightarrow \infty$ implies $j(x) \rightarrow \infty$. \\
\subasu \label{subasu:existence-saddle-points-ii} The problem is strictly feasible in the sense that 
\begin{equation}
\label{eq:constraint-qualification}
0 \in \textup{int}\,\textup{dom}\, v. 
\end{equation}
\end{assumption}
The condition \eqref{eq:constraint-qualification} is satisfied if $v$ is finite in a neighborhood of zero and corresponds to an ``almost sure''-type Robinson constraint qualification. Suppose
\begin{equation*}
G(x_1, x_2; \omega):= \begin{pmatrix}
e(x_1, x_2; \omega)\\
i(x_1, x_2; \omega)
\end{pmatrix} , \quad \hat{K}:= \begin{pmatrix}
\{ 0\}\\
K
\end{pmatrix}.
\end{equation*}
Notice that the condition \eqref{eq:constraint-qualification} implies that for all $u$ in a neighborhood of zero in $L^\infty(\Omega,W) \times L^\infty(\Omega,R)$, there exists $x_1 \in C$, $x_2 \in L^\infty(\Omega,X_2)$ such that  
\begin{equation}
\label{eq:metric-regularity}
-G(x_1,x_2(\omega);\omega)-u(\omega) \in \hat{K} \quad \text{a.s.}
\end{equation}
Note that $G(\cdot,\cdot;\omega)$ is $\hat{K}$-convex.
Then the condition \eqref{eq:metric-regularity} is equivalent to a Robinson's constraint qualification for fixed $\omega$ in the case where $G(\cdot,\cdot;\omega)$ is continuously differentiable on $X_1\times X_2$; cf.~\cite[Proposition 2.104]{Bonnans2013}.

To obtain ``regular'' Lagrange multipliers, meaning multipliers with respect to the Lagrangian $L$, we make an additional assumption. 
\begin{assumption}[relatively complete recourse]
\label{assumption:relatively-complete-recourse}
The induced feasible set 
\begin{align*}
\tilde{C} := \{ x_1 \in X_1 & \colon \exists x_2 \in L^\infty(\Omega,X_2) \text{ satisfying } e(x_1,x_2(\omega);\omega) = 0 \text{ a.s.},   i(x_1, x_2(\omega); \omega) \leq_K 0 \text{ a.s.} \}% \\&\hspace{54mm}
\end{align*} contains the first-stage feasible set, i.e. $C \subset \tilde{C}.$
\end{assumption}

\Cref{assumption:relatively-complete-recourse} is quite strong, essentially requiring the second-stage variable to be feasible for any feasible first-stage variable. In \Cref{subsec:regular-mutlipliers}, we will see that for problems satisfying this condition, Lagrange multipliers happen to be more regular. In that case, integrable multipliers in $\Lambda$ can be found and the singular terms from $\Lambda^\circ$ vanish.

\subsection{KKT conditions}
In this section, we present strong KKT conditions without the additional \Cref{assumption:relatively-complete-recourse}. This leads to Lagrange multipliers belonging to $\Lambda \times \Lambda^\circ$, i.e., multipliers have singular parts.
\Cref{thm:extended-KKT-conditions} uses arguments made in \cite[Section 5]{Rockafellar1976a}, where the finite-dimensional case was considered. Many arguments from \cite{Geiersbach2020+} can be carried over; we repeat some arguments here and in \Cref{section:saddle-points-relatively-complete-recourse} for completeness. First, we obtain the existence of saddle points.

\begin{lemma}
\label{lemma:saddle-points-lagrangian}
Let \Cref{assumption:general-problem} and \Cref{assumption:existence-saddle-points} be satisfied. Then there exists a saddle point $(\bar{x},\bz,\bz^\circ) \in X_0 \times \Lambda_0 \times \Lambda_0^\circ$ to the Lagrangian $\bar{L}$, i.e, the point $(\bar{x},\bz,\bz^\circ)$ satisfies
\begin{equation}
\label{eq:saddle-point-extended-Lagrangian}
\bar{L}(\bx,\z,\z^\circ)  \leq \bar{L}(\bx,\bz,\bz^\circ)\leq \bar{L}(x,\bz,\bz^\circ) \quad \forall (x,\z,\z^\circ) \in X \times \Lambda \times \Lambda^\circ.
\end{equation}
\end{lemma}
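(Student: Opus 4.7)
The plan is to invoke classical perturbational convex duality for the value function $v$ and then decompose the resulting multiplier via \Cref{thm:ioffe-levin}. Under \Cref{assumption:general-problem}, the objective $j$ is convex, $e(\cdot,\cdot;\omega)$ is linear, and $i(\cdot,\cdot;\omega)$ is $K$-convex, so the perturbed admissible map $u \mapsto F_{\textup{ad},u}$ is convex in the cone-order sense and $v \colon U \to \R \cup \{-\infty, \infty\}$ is convex. By \Cref{subasu:existence-saddle-points-ii}, $v$ is finite on a neighborhood of $0$, hence continuous and subdifferentiable there; standard perturbational duality identifies any $\mu^* \in \partial v(0) \subset U^*$ as a Lagrange multiplier for \eqref{eq:model-problem-abstract-long}, and the monotonicity of the perturbed admissible set in directions from $L^\infty(\Omega, K)$ shows that the inequality component $\mu_i^*$ is nonnegative on that cone.

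I would then apply \Cref{thm:ioffe-levin} componentwise to write $\mu_e^* = \p + \p^\circ$ and $\mu_i^* = \lami + \lami^\circ$ with $(\p,\lami) \in \Lambda$ (identified with their $L^1$-representatives) and $(\p^\circ,\lami^\circ) \in \Lambda^\circ$. The cone condition then splits between the two components: for any $k \in L^\infty(\Omega,R)$ with $k(\omega) \in K$ a.s., testing $\langle \mu_i^*, k\rangle \geq 0$ against $k$ that vanishes on the carrier sets $\{F_n\}$ of $\lami^\circ$ annihilates the singular contribution and yields, via a measurable selection argument, $\lami(\omega) \in K^\oplus$ a.s.; testing against $k$ supported on $F_n$ and passing $n \to \infty$ suppresses the absolutely continuous part and gives $\lami^\circ \in \mathcal{K}^\oplus$.

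To produce the primal point $\bx$, I use the direct method: \Cref{subasu:existence-saddle-points-i} bounds any minimizing sequence, so a subsequence converges in the product topology (weak on $X_1$, weak* on $L^\infty(\Omega,X_2)$) to some $\bx \in X$. The equality constraint is weak*-closed by \Cref{corollary:affine-maps-are-weak-star-closed}; the inequality constraint is closed by weak* lower semicontinuity of $x \mapsto \E[\langle r^*, i(x)\rangle_{R^*,R}]$ for each fixed $r^* \in K^\oplus$ (cf.~\Cref{remark:weak-star-lsc-2}); and $j$ is weak* lower semicontinuous by reflexivity of $X_1$ together with \Cref{prop:weak-weak*-lsc}. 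The classical equivalence between saddle points of the Lagrangian on $X_0 \times U^*$ and pairs (primal optimum, subgradient of $v$ at $0$) then delivers \eqref{eq:saddle-point-extended-Lagrangian} after substituting the decomposed $\bar{\mu}^*$ into $\bar{L}$. I expect the most delicate step to be verifying $\lami^\circ \in \mathcal{K}^\oplus$, because the cone condition cannot be checked pointwise for singular functionals and must be obtained through the concentration property in \Cref{def:singular-functionals} together with the norm-additivity in \Cref{thm:ioffe-levin}.
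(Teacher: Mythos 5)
Your proposal is correct and follows essentially the same route as the paper: the paper obtains primal attainment by the direct method (via weak* lower semicontinuity of the perturbation function, \Cref{lemma:lsc-conjugate-function} and \Cref{thm:minP-supD}) and dual attainment from the constraint qualification via Rockafellar's perturbational duality applied to the value function $v$ (\Cref{thm:infP-maxDbar}), with the Ioffe--Levin decomposition built into the definition of the extended dual over $\Lambda \times \Lambda^\circ$. The only cosmetic difference is that the paper sidesteps your explicit cone-splitting verification of $\lami(\omega) \in K^\oplus$ a.s.\ and $\lami^\circ \in \mathcal{K}^\oplus$ through the convention that $\bar{L}(x,\lambda,\lambda^\circ) = -\infty$ whenever $(\lambda,\lambda^\circ) \notin \Lambda_0 \times \Lambda_0^\circ$, which forces the dual maximizer into the admissible set automatically.
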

\begin{proof}
This is proven in \Cref{thm:minP-supD} and \ref{thm:infP-maxDbar}.
\end{proof}

It will be convenient to define the function
\begin{equation}
\label{eq:definition-singular-functional}
 \ell(x_1,{\lambda}^\circ) =\inf_{x_2 \in L^\infty(\Omega,X_2)} L^\circ(x,\lambda^\circ).
\end{equation}

Let us start with a technical lemma, which adapts  \cite[Theorem 3]{Rockafellar1976b} to include risk functionals.
\begin{lemma}
\label{lemma:technical-for-singular-terms}
Let \Cref{assumption:general-problem} be satisfied. Then for every $(\lambda,\lambda^\circ) \in \Lambda_0 \times \Lambda_0^\circ$, we have
\begin{equation*}
\inf_{x \in X_0} \{ L(x,\lambda) + L^\circ(x,\lambda^\circ) \} = \inf_{x \in X_0} \{ L(x,\lambda) + \ell(x_1,\lambda^\circ )\}.
\end{equation*}
\end{lemma}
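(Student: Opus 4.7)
The inequality $\inf_{x\in X_0}\{L(x,\lambda)+L^\circ(x,\lambda^\circ)\}\geq \inf_{x\in X_0}\{L(x,\lambda)+\ell(x_1,\lambda^\circ)\}$ is immediate from the definition of $\ell$ in \eqref{eq:definition-singular-functional}, since $L^\circ(x,\lambda^\circ)\geq \ell(x_1,\lambda^\circ)$ for every $x\in X$. The content of the lemma is therefore the reverse inequality, and the plan is the following: for a fixed $x=(x_1,x_2)\in X_0$, I will modify only the second-stage component $x_2$ on an appropriate shrinking sequence of sets so as to drive $L^\circ$ arbitrarily close to $\ell(x_1,\lambda^\circ)$ while perturbing $L(\cdot,\lambda)$ by an arbitrarily small amount.

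Concretely, fix $x\in X_0$ and $\varepsilon>0$, select $\tilde{x}_2\in L^\infty(\Omega,X_2)$ with $L^\circ((x_1,\tilde{x}_2),\lambda^\circ) \leq \ell(x_1,\lambda^\circ)+\varepsilon$ (or, when $\ell(x_1,\lambda^\circ)=-\infty$, with $L^\circ<-1/\varepsilon$ in place of this bound), and combine the decreasing sequences from \Cref{def:singular-functionals} associated with $\lambda_e^\circ$ and $\lambda_i^\circ$ via unions into a single decreasing $\{F_n\}\subset\mathcal{F}$ with $\pP(F_n)\to 0$ on which both singular functionals are concentrated. Then set
\begin{equation*}
x_2^n := \tilde{x}_2\,\chi_{F_n}+x_2\,\chi_{F_n^c}, \qquad x^n := (x_1,x_2^n),
\end{equation*}
which lies in $X_0$. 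Since $x_2^n$ agrees pointwise with $\tilde{x}_2$ on $F_n$, the $L^\infty$-valued maps $e(x_1,x_2^n;\cdot)-e(x_1,\tilde{x}_2;\cdot)$ and $i(x_1,x_2^n;\cdot)-i(x_1,\tilde{x}_2;\cdot)$ vanish on $F_n$, and \Cref{def:singular-functionals} then yields the exact identity $L^\circ(x^n,\lambda^\circ) = L^\circ((x_1,\tilde{x}_2),\lambda^\circ)\leq \ell(x_1,\lambda^\circ)+\varepsilon$.

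The remaining, and main technical, step is to verify $L(x^n,\lambda)\to L(x,\lambda)$ as $n\to\infty$. For the two integral pairings this follows from the growth bounds \eqref{eq:growth-condition-constraints}, the integrability $\lambda_e\in L^1(\Omega,W^*)$ and $\lambda_i\in L^1(\Omega,R^*)$, and absolute continuity of the Lebesgue integral on $F_n$. The composed risk term $\mathcal{R}[J_2(x^n)]$ requires more care, as here pointwise cancellation is unavailable; the plan is to dominate $|J_2(x_1,x_2^n;\cdot)-J_2(x_1,x_2;\cdot)|$ by $2a_r$ with $a_r\in L^p(\Omega)$ from \Cref{subasu:general-iii} for a radius $r$ covering both $x_2$ and $\tilde{x}_2$, to deduce $L^p(\Omega)$-convergence $J_2(x^n)\to J_2(x)$ by absolute continuity of the $L^p$-integral on $F_n$, and to invoke continuity of $\mathcal{R}$ on $L^p(\Omega)$ (guaranteed by \Cref{subasu:general-vi}) to conclude $\mathcal{R}[J_2(x^n)]\to \mathcal{R}[J_2(x)]$. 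Combining these facts yields $\inf_{y\in X_0}\{L(y,\lambda)+L^\circ(y,\lambda^\circ)\}\leq L(x,\lambda)+\ell(x_1,\lambda^\circ)+\varepsilon$; letting $\varepsilon\to 0$ and taking the infimum over $x\in X_0$ delivers the reverse inequality and completes the proof.
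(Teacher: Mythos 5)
Your proposal is correct and follows essentially the same route as the paper: form the union $F_n$ of the concentration sets, splice the near-minimizer $\tilde{x}_2$ of $\ell(x_1,\cdot)$ onto $F_n$ and the given $x_2$ onto $F_n^c$, observe that $L^\circ$ is unchanged exactly because the difference vanishes on $F_n$, and pass to the limit in $L(\cdot,\lambda)$ via $L^p$-convergence of $J_2$ and continuity of $\mathcal{R}$. The only (welcome) deviations are cosmetic: you treat the case $\ell(x_1,\lambda^\circ)=-\infty$ explicitly and argue $J_2(x^n)\to J_2(x)$ in $L^p(\Omega)$ directly by domination, rather than through the paper's intermediate claim of strong convergence $x^n\to x''$ in $X$.
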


\begin{proof}
To begin, notice that for all $x_1 \in C_1$, we trivially have
\begin{equation*}
 \inf_{x_2 \in L^\infty(\Omega,X_2)} \{ L(x,\lambda) + L^\circ(x,\lambda^\circ)\} \geq  \inf_{x_2 \in L^\infty(\Omega,X_2)} L(x,\lambda) + \inf_{x_2 \in L^\infty(\Omega,X_2)} L^\circ(x,\lambda^\circ).
\end{equation*}
Hence it is enough to show the inequality ``$\leq$'', or equivalently, that for arbitrary $\varepsilon >0$ and arbitrarily fixed $x':=(x_1, y'), x'':=(x_1, y'') \in X_0$, there exists $x=(x_1,y) \in X_0$ such that the inequality
\begin{equation}
\label{eq:proof-infP-maxD-q2}
 L(x,\lambda) + L^\circ(x,\lambda^\circ) \leq L(x'',\lambda) + L^\circ(x',\lambda^\circ) + \varepsilon
\end{equation}
is true. Then \eqref{eq:proof-infP-maxD-q2} in combination with the definition \eqref{eq:definition-singular-functional} will prove the assertion.

Let $\{ F_{e,n}\}$ and $\{ F_{i,n}\}$ be the sets on which $\lambda_e^\circ$ and $\lambda_i^\circ$ are concentrated according to \Cref{def:singular-functionals}.
% \CG{Vgl. Levin1975, PDF p.24}
We define $F_n = F_{e,n}\cup F_{i,n}$ and
\begin{equation*}
  y^n(\omega) = \begin{cases}
y'(\omega), &\omega \in F_n\\
y''(\omega), &\omega \not\in F_n
  \end{cases}.
\end{equation*}
On $F_n$, we have
$e(\u,y^n(\omega);\omega) = e(\u,y'(\omega);\omega)$ and
$i(\u,y^n(\omega);\omega) = i(\u,y'(\omega);\omega).$ Therefore, thanks to \Cref{def:singular-functionals}, for $x^n = (x_1,y^n)$ we have 
\begin{equation*}
\langle  \p^\circ, e(x^n)-e(x')\rangle_{(L^\infty(\Omega,W))^*,L^\infty(\Omega,W)} = 0, \quad
\langle  \lami^\circ, i(x^n) - i(x')\rangle_{(L^\infty(\Omega,R))^*,L^\infty(\Omega,R)} = 0,
\end{equation*}
which implies
\begin{equation}
\label{eq:equality-on-singular-Lagrangian}
L^\circ(x^n,\lambda^\circ) = L^\circ(x',\lambda^\circ) \quad \forall n.
\end{equation}
Let $\mathbbm{1}_A$ denote the indicator function on $A$, defined by $\mathbbm{1}_A(x) = 1$ if $x \in A$ and $\mathbbm{1}_{A}(x) = 0$ otherwise. Then $J_2(x_1, y^n(\cdot); \cdot) \rightarrow J_2(x_1, y''(\cdot); \cdot)$ in $L^p(\Omega)$ as $n \rightarrow \infty.$ Indeed, 
\begin{equation*}
\begin{aligned}
&\lVert J_2(x_1, y^n(\cdot); \cdot) -   J_2(x_1, y''(\cdot); \cdot)\rVert_{L^p(\Omega)}^p \\
&\quad = \int_\Omega |J_2(x_1, y''(\omega); \omega)\mathbbm{1}_{\Omega\backslash F_n}(\omega) + J_2(x_1, y'(\omega); \omega)\mathbbm{1}_{F_n}(\omega) -   J_2(x_1, y''(\omega); \omega)|^p \D \pP(\omega) \\
& \quad = \int_{\Omega\backslash F_n} |J_2(x_1, y''(\omega); \omega) -   J_2(x_1, y''(\omega); \omega)|^p \D \pP(\omega) \\
&\quad \quad\quad + \int_{F_n}  |J_2(x_1, y'(\omega); \omega) -   J_2(x_1, y''(\omega); \omega)|^p \D \pP(\omega).
\end{aligned}
\end{equation*}
Since the integrand in the last integral above is bounded and $\pP(F_n) \rightarrow 0$, the last integral vanishes. We now show that $x^n$ converges strongly to $x''$ in $X$. Notice that
\begin{align*}
\lVert x^n - x''\rVert_{X} &= \lVert y^n - y''\rVert_{L^\infty(\Omega, X_2)} = \esssup_{\omega \in \Omega} \lVert y^n(\omega)-y''(\omega) \rVert_{X_2}\\
& = \esssup_{\omega \in F_n} \lVert y^n(\omega)-y''(\omega) \rVert_{X_2} \\
&= \inf \{a \in \R: \pP\{ \omega \in F_n : \lVert y^n(\omega)-y''(\omega) \rVert_{X_2} > a  \} =0 \}.
\end{align*}
In combination with $\pP(F_n) \rightarrow 0$ as $n\rightarrow \infty$, the event $\{ \omega \in F_n : \lVert y^n(\omega)-y''(\omega) \rVert_{X_2} > a  \} $ has vanishing small probability for all $a \geq 0$ as $n \rightarrow \infty$, i.e., in particular for the choice $a=0$. Recall that $J_2  \colon X \rightarrow L^p(\Omega)$ is continuous by \Cref{rem:growth-conditions}. Now, continuity of $\cR: L^p(\Omega) \rightarrow \R$ gives 
\begin{equation*}
\lim_{n\rightarrow \infty} \cR[J_2(x^n)] = \cR[J_2(x'')].
\end{equation*}
Analogous arguments yield 
\begin{equation*}
\lim_{n \rightarrow \infty} \E[\langle \lambda_e,e(x^n)\rangle_{W^*,W}] =  \E[\langle \lambda_e,e(x'')\rangle_{W^*,W}], \quad \lim_{n \rightarrow \infty} \E[\langle \lambda_i,i(x^n)\rangle_{R^*,R}] = \E[\langle \lambda_i,i(x'')\rangle_{R^*,R}].
\end{equation*}
As a result, we obtain $\lim_{n \rightarrow \infty} L(x^n, \lambda) = L(x'', \lambda).$
In combination with \eqref{eq:equality-on-singular-Lagrangian},  for any $\varepsilon >0$, there exists an $n_0$ such that for all $n\geq n_0$ we have 
\begin{align*}
 L(x^n,\lambda) +L^\circ(x^n, \lambda^\circ)\leq L(x'',\lambda) + L^\circ(x', \lambda^\circ) + \varepsilon.
\end{align*}
After setting $x:=x^n$ (for an arbitrary $n \geq n_0$), we obtain \eqref{eq:proof-infP-maxD-q2}.
\end{proof}

\begin{theorem}
\label{thm:extended-KKT-conditions}
Let \Cref{assumption:general-problem} be satisfied. Then $(\bar{x}, \bar{\lambda},\bar{\lambda}^\circ) \in X_0 \times \Lambda_0 \times \Lambda_0^\circ$ satisfies \eqref{eq:saddle-point-extended-Lagrangian} if and only if there exist $\bar{\rho} \in L^1(\Omega, X_1^*)$ and $\bar{\vartheta} \in \partial \cR[J_2(\bx)]$ fulfilling the following conditions:
\begin{enumerate}
\item[(i)] The function
\begin{equation*}
x_1 \mapsto J_1(x_1) + \langle \E[\bar{\rho}], x_1\rangle_{X_1^*,X_1} + \ell(x_1,\bar{\lambda}^\circ)
\end{equation*}
 attains its minimum over $C$ at $\bar{x}_1.$
\item[(ii)] The function 
\begin{align*}
(x_1,x_2) &\mapsto J_2(x_1,x_2;\omega)\bar{\vartheta}(\omega)+\langle \bar{\lambda}_e(\omega),e(x_1,x_{2};\omega)\rangle_{W^*,W} \\
&\quad + \langle \bar{\lambda}_i(\omega), i(x_1,x_2;\omega)\rangle_{R^*,R} - \langle \bar{\rho}(\omega), x_1\rangle_{X_1^*,X_1}
\end{align*}
attains its minimum in $X_1 \times X_2$ at $(\bar{x}_1,\bar{x}_2(\omega))$ for almost every $\omega \in \Omega$.
\item[(iii)] It holds that $\bar{x}_1 \in C$ and the following conditions hold almost surely:
\begin{align*}
e(\bar{x}_1,\bar{x}_2(\omega);\omega) &= 0,\\
 \bar{\lambda}_i(\omega) \in K^\oplus, \quad i(\bar{x}_1,\bar{x}_2(\omega);\omega)\leq_K 0,& \quad \langle \bar{\lambda}_i(\omega), i(\bar{x}_1, \bar{x}_2(\omega); \omega)\rangle_{R^*,R}=0.
\end{align*}
\item[(iv)] It holds that $ \ell(\bar{x}_1,\bar{\lambda}^\circ) = 0$ and
\begin{align*}
\bar{\lambda}_i^\circ \in \mathcal{K}^\oplus, \quad \langle \bar{\lambda}_i^\circ, i(\bx)\rangle_{(L^\infty(\Omega,R))^*, L^\infty(\Omega,R)}=0.
\end{align*}
\end{enumerate}
\end{theorem}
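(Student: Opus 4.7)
The plan is to prove both directions of the equivalence; necessity is the substantive direction while sufficiency is a direct verification. The overall strategy mirrors the arguments in \cite{Geiersbach2020+}, but additionally handles the risk measure $\cR$ via a subgradient $\bar\vartheta$.

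I would first extract a.s.\ feasibility and complementarity from the dual-side saddle-point inequality $\bar L(\bx,\lambda,\lambda^\circ) \le \bar L(\bx,\bar\lambda,\bar\lambda^\circ)$ on $\Lambda_0\times\Lambda_0^\circ$: varying $\lambda_e,\lambda_e^\circ$ over the linear spaces $L^1(\Omega,W^*)$ and $\mathcal S_e$, and $\lambda_i,\lambda_i^\circ$ over the cones $\{r^*\in L^1(\Omega,R^*):r^*(\omega)\in K^\oplus\text{ a.s.}\}$ and $\cK^\oplus$, yields $e(\bx)=0$ a.s., $i(\bx)\le_K 0$ a.s., $\bar\lambda_i(\omega)\in K^\oplus$ a.s., pointwise complementarity $\langle\bar\lambda_i(\omega),i(\bx_1,\bx_2(\omega);\omega)\rangle_{R^*,R}=0$ a.s., and the singular complementarity clauses of (iv). The primal-side inequality gives $\bx\in\argmin_{x\in X_0}[L(x,\bar\lambda)+L^\circ(x,\bar\lambda^\circ)]$; combining this with \Cref{lemma:technical-for-singular-terms} and the pointwise bound $\ell(x_1,\bar\lambda^\circ)\le L^\circ(x,\bar\lambda^\circ)$, we deduce $L^\circ(\bx,\bar\lambda^\circ)=\ell(\bx_1,\bar\lambda^\circ)$ and $\bx\in\argmin_{x\in X_0}[L(\cdot,\bar\lambda)+\ell(\cdot_1,\bar\lambda^\circ)]$. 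Using the just-derived complementarity and $e(\bx)=0$, we get $L^\circ(\bx,\bar\lambda^\circ)=0$, hence $\ell(\bx_1,\bar\lambda^\circ)=0$, completing (iii) and (iv).

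To extract (i) and (ii) along with $\bar\vartheta$ and $\bar\rho$, I would apply the convex chain rule for $\partial(\cR\circ J_2)$. Since $\cR$ is continuous and subdifferentiable on $L^p(\Omega)$ by \Cref{subasu:general-vi} and \cite[Proposition 6.5]{Shapiro2009}, the chain rule for the composition of the convex monotone $\cR$ with the convex $J_2$ yields $\partial(\cR\circ J_2)(\bx)=\bigcup_{\vartheta\in\partial\cR[J_2(\bx)]}\partial I_{\vartheta J_2}(\bx)$. The sum rule for the optimality of $\bx$ in $L(\cdot,\bar\lambda)+\ell(\cdot_1,\bar\lambda^\circ)+\delta_{X_0}$ then produces a $\bar\vartheta\in\partial\cR[J_2(\bx)]$ such that $\bx$ also minimizes
\[
\tilde\Phi(x):=J_1(x_1)+I_g(x)+\ell(x_1,\bar\lambda^\circ)+\delta_{X_0}(x),
\]
where $g(x_1,x_2;\omega):=\bar\vartheta(\omega)J_2(x_1,x_2;\omega)+\langle\bar\lambda_e(\omega),e(x_1,x_2;\omega)\rangle_{W^*,W}+\langle\bar\lambda_i(\omega),i(x_1,x_2;\omega)\rangle_{R^*,R}$. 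Viewing $x_1$ as a constant element of $L^\infty(\Omega,X_1)$ and applying \Cref{cor:characterization-subdifferentials} to the joint integrand $g$ on $L^\infty(\Omega,X_1\times X_2)$, together with the Lagrange multiplier associated with the nonanticipativity constraint $x_1\equiv\mathrm{const}$, produces $\bar\rho\in L^1(\Omega,X_1^*)$ such that, a.s., $(\bar\rho(\omega),0)\in\partial_{(x_1,x_2)}g(\bx_1,\bx_2(\omega);\omega)$ (which is (ii)) and $\E[\bar\rho]$ is the deterministic subgradient giving (i).

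The main obstacle is precisely the joint pointwise subgradient condition $(\bar\rho(\omega),0)\in\partial g(\bx_1,\bx_2(\omega);\omega)$ in (ii), which is strictly stronger than having the two partial subdifferential memberships $\bar\rho(\omega)\in\partial_{x_1}g(\bx_1,\bx_2(\omega);\omega)$ and $0\in\partial_{x_2}g(\bx_1,\bx_2(\omega);\omega)$ separately; the nonanticipativity lift of $x_1$ into $L^\infty(\Omega,X_1)$ combined with \Cref{cor:characterization-subdifferentials} on the joint integrand $g$ is what supplies it, mimicking the Rockafellar--Wets approach in the vector-valued setting of \cite{Geiersbach2020+}. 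Sufficiency follows by reversing the construction: given $\bar\rho,\bar\vartheta$ and (i)--(iv), the first saddle-point inequality is immediate from feasibility and complementarity; the second is obtained by integrating the pointwise joint minimality in (ii) over $\omega$, using the Fenchel inequality $\cR[J_2(x)]\ge\E[\bar\vartheta J_2(x)]-\cR^*[\bar\vartheta]$ (with equality at $\bx$) to reinstate the risk term, and invoking (i) for the $x_1$-optimality over $C$.
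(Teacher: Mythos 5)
Your proposal is correct and follows essentially the same route as the paper: feasibility and complementarity from the dual-side saddle inequality, reduction of $L^\circ$ to $\ell$ via \Cref{lemma:technical-for-singular-terms}, the nonanticipativity lift of $x_1$ into $L^\infty(\Omega,X_1)$ combined with the Moreau--Rockafellar sum rule, and \Cref{cor:characterization-subdifferentials} applied to the joint integrand to obtain the pointwise condition in (ii) with $\bar{\rho}=\bar{q}_1$ and $\E[\bar{\rho}]$ in (i). The only (harmless) deviation is how $\bar{\vartheta}$ enters: you invoke a subdifferential chain rule for $\cR\circ J_2$, whereas the paper picks any $\bar{\vartheta}\in\partial\cR[J_2(\bx)]$ and uses the Fenchel inequality $\E[J_2(x)\bar{\vartheta}]-\cR^*[\bar{\vartheta}]\leq\cR[J_2(x)]$, with equality at $\bx$, so that the linearized functional minorizes the Lagrangian and is still minimized at $\bx$ --- avoiding the chain rule altogether.
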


The origin of the multiplier $\bar{\rho}$ in \Cref{thm:extended-KKT-conditions} will become apparent in the proof. There, in view of applying the Moreau--Rockafellar theorem, the variable $x_1 \in X_1$ first needs to be identified with its corresponding element in the Bochner space $L^\infty(\Omega,X_1)$. This identification is equivalent to imposing $x_1(\omega) \equiv x_1$, meaning $x_1(\omega) = x_1$ a.s. Sometimes, this condition is imposed explicitly in the problem formulation and is known as a ``nonanticipativity'' constraint; see \cite[Section 2.4]{Shapiro2009}.
\begin{rem}
\label{rem:interpretation-ell-condition}
Recall the convention $\bar{L}(x,\lambda,\lambda^\circ) = -\infty$ if $x \in X_0$ and $(\lambda,\lambda^\circ) \not\in \Lambda_0 \times \Lambda_0^\circ$. Given $\lambda^\circ \in \Lambda^\circ$, we have the implication
 \begin{equation}
\label{eq:feasibility-negative-Lagrangian-term-1}
e(\bx_1,\bx_2(\omega);\omega) = 0 \text{ a.s.}, \quad i(\bx_1, \bx_2(\omega);\omega) \leq_K 0 \text{ a.s.} \quad \Rightarrow \quad L^\circ(\bx,\lambda^\circ)\leq 0.
\end{equation}
Hence conditions (iii) and (iv) imply 
\begin{equation}
\label{eq:ell-function-zero}
0=\ell(\bar{x}_1,\bar{\lambda}^\circ) \leq L^\circ(\bar{x},\bar{\lambda}^\circ) \leq 0.
\end{equation}
\end{rem}

\begin{proof}[Proof of \Cref{thm:extended-KKT-conditions}]
We show that if conditions (i)--(iv) are satisfied, then a saddle point exists. Notice that $\bar{\vartheta} \in \partial \cR[J_2(\bx)]$,  \eqref{eq:biconjugate-risk-functional}, and \eqref{eq:subdifferentiability-expression} imply with $\xi=J_2(\bx)$
\begin{equation}
\label{eq:inequality-risk-functional-1}
\E[J_2(\bx)\bar{\vartheta}] - \cR^*[\bar{\vartheta}] = \cR[J_2(\bx)].
\end{equation}
Additionally, \eqref{eq:biconjugate-risk-functional} implies with $\xi=J_2(x)$
\begin{equation}
\label{eq:inequality-risk-functional}
\begin{aligned}
 \E[J_2(x)\bar{\vartheta}]- \cR^*[\bar{\vartheta}] &\leq \sup_{\vartheta \in \textup{dom} (\cR^*)} \{ \E[J_2(x){\vartheta}]- \cR^*[{\vartheta}]\} =\cR[J_2(x)].
\end{aligned}
\end{equation}
Now,
\begin{align*}
\bar{L}(\bar{x},\bar{\lambda},\bar{\lambda}^\circ)  &= L(\bar{x},\bar{\lambda}) + L^\circ(\bar{x},\bar{\lambda}^\circ)\\
&\overset{\eqref{eq:ell-function-zero}}{=}J_1(\bx_1)+\cR[J_2(\bx)]  +\E[\langle \bar{\lambda}_e, e(\bx)\rangle_{W^*,W} + \langle \bar{\lambda}_i, i(\bx)\rangle_{R^*,R}]  + \ell(\bx_1,\bz^\circ)\\
& \overset{\eqref{eq:inequality-risk-functional-1}}{=} J_1(\bx_1)+  \ell(\bx_1,\bz^\circ)+\E[J_2(\bx)\bar{\vartheta}] - \cR^*[\bar{\vartheta}]  +\E[\langle \bar{\lambda}_e, e(\bx)\rangle_{W^*,W} +\langle \bar{\lambda}_i, i(\bx)\rangle_{R^*,R}] \\
& = J_1(\bx_1)+ \langle \E[\bar{\rho}],\bx_1 \rangle_{X_1^*,X_1} + \ell(\bx_1,\bz^\circ)+\E[J_2(\bx)\bar{\vartheta}] - \cR^*[\bar{\vartheta}]  \\
& \quad \quad +\E[\langle \bar{\lambda}_e, e(\bx)\rangle_{W^*,W} +\langle \bar{\lambda}_i, i(\bx)\rangle_{R^*,R} - \langle \bar{\rho},\bx_1 \rangle_{X_1^*,X_1} ]  \\
&\overset{\textup{(i),(ii)}}{\leq}J_1(x_1)+\langle \E[\bar{\rho}],x_1 \rangle_{X_1^*,X_1} + \ell(x_1,\bz^\circ) + \E[J_2(x)\bar{\vartheta}] - \cR^*[\bar{\vartheta}] \\
& \quad \quad +\E[\langle \bar{\lambda}_e, e(x)\rangle_{W^*,W} + \langle \bar{\lambda}_i, i(x)\rangle_{R^*,R}- \langle \bar{\rho},x_1 \rangle_{X_1^*,X_1} ]\\
& \overset{\eqref{eq:inequality-risk-functional}}{\leq} \bar{L}(x,\bz,\bz^\circ) \quad \forall x \in X.
\end{align*}
Note that we used monotonicity of the expectation operator in the second to last inequality above.

To prove that $\bar{L}(\bar{x},\z,\z^\circ)\leq \bar{L}(\bx,\bz,\bz^\circ)$ for all $(\z,\z^\circ) \in \Lambda \times \Lambda^\circ$, we first prove $L(\bx, \lambda) \leq L(\bx, \bar{\lambda})$ for all $\lambda \in \Lambda$. Notice this follows if
\begin{equation*}
\E[\langle \lambda_e,e(\bx)\rangle_{W^*,W} +\langle \lambda_i,i(\bx)\rangle_{R^*,R} ] \leq \E[\langle \bar{\lambda}_e,e(\bx)\rangle_{W^*,W} +\langle \bar{\lambda}_i,i(\bx)\rangle_{R^*,R} ],
\end{equation*}
for all $\lambda \in \Lambda$. This is fulfilled thanks to condition (iii) and the fact that $\langle \lambda_i(\omega), i(\bx_1, \bx_2(\omega);\omega)\rangle_{R^*,R} \leq 0$ a.s.~for all $\lambda \in \Lambda$. Now, it is enough to argue that $L^\circ(\bx,\lambda^\circ) \leq L^\circ(\bx,\bz^\circ)$ for all $\lambda^\circ \in \Lambda^\circ.$ This is also clear, since given $\lambda^\circ \in \Lambda^\circ$ and conditions (iii) and (iv), \eqref{eq:ell-function-zero} holds, hence $0 = L^\circ(\bar{x},\bz^\circ) = \sup_{\lambda^\circ \in \Lambda^\circ} L^\circ(\bx,\z^\circ) \geq L^\circ(\bx,\z^\circ).$

For the second part of the proof, we show that $(\bx,\bz,\bz^\circ)$ being a saddle point implies conditions (i)--(iv).  Trivially, $(\bar{x}, \bar{\lambda}, \bar{\lambda}^\circ) \in X_0 \times \Lambda_0 \times \Lambda_0^\circ$  implies $\bx_1 \in C$, $\bar{\lambda}_i(\omega) \in K^\oplus$ a.s., and $\bz_i^\circ \in \mathcal{K}^\oplus$. It is straightforward to argue that $e(\bx_1,\bx_2(\omega);\omega) = 0$ a.s.~and $i(\bx_1,\bx_2(\omega);\omega) \leq_K 0$ a.s., since otherwise one would have $\sup_{\lambda \in \Lambda} L(\bx, \lambda) = \infty$. 
Now,
\begin{equation*}
\label{eq:saddle-point-singular-terms-zero}
\begin{aligned}
0 &\geq   L^\circ(\bx,\bar{\lambda}^\circ) = \sup_{\lambda^\circ \in \Lambda_0^\circ} \{\langle \lambda_e^\circ, e(\bx)\rangle_{(L^\infty(\Omega,W))^*,L^\infty(\Omega,W)} +\langle \lambda_i^\circ, i(\bx)\rangle_{(L^\infty(\Omega,R))^*,L^\infty(\Omega,R)} \}.
\end{aligned}
\end{equation*}
Since $0 \in \mathcal{K}^\oplus$, we obtain
\begin{equation*}
\label{eq:singular-Lagrange-disappears}
L^\circ(\bar{x},\bar{\lambda}^\circ) = 0
\end{equation*}
and thus $\ell(\bx_1,\bar{\lambda}^\circ) = 0$ and $\langle \lambda_i^\circ, i(\bx)\rangle_{(L^\infty(\Omega,R))^*,L^\infty(\Omega,R)} =0.$ Analogous arguments can be made to conclude that $\langle \bar{\lambda}_i(\omega),i(\bx_1,\bx_2(\omega);\omega)\rangle_{R^*,R} = 0$ a.s.
We have shown that conditions (iii)-(iv) hold.

Now, we show that conditions (i)--(ii) must hold. Since $(\bx, \bar{\lambda}, \bar{\lambda}^\circ)$ is a saddle point, we have in combination with \Cref{lemma:technical-for-singular-terms} that
\begin{align}
\label{eq:optimality-saddle-point-primal}
&\bar{L}(\bx,\bz,\bz^\circ)  = \inf_{x \in X_0} \bar{L}(x,\bz,\bz^\circ)=  \inf_{x \in X_0} \{L(x,\bar{\lambda})  + \ell(x_1,\bar{\lambda}^\circ) \}.
\end{align}
We first work on rewriting the expression $L(x,\bar{\lambda})  + \ell(x_1,\bar{\lambda}^\circ).$ We define the convex function 
\begin{equation}
\label{eq:definition-H1}
H_1(x) = \begin{cases}
 J_1(x_1) + \ell(x_1, \bar{\lambda}^\circ), & x \in X_0,\\
 \infty, & \text{else}.
 \end{cases}
\end{equation}
Additionally, we define the integrand
\begin{equation*}
\begin{aligned}
h_2(x_1,x_2;\omega)& = J_2(x_1,x_2; \omega)\bar{\vartheta}(\omega) + \langle \bar{\lambda}_e(\omega), e(x_1,x_2;\omega)\rangle_{W^*,W} \\
&\quad \quad +  \langle \bar{\lambda}_i(\omega), i(x_1,x_2;\omega)\rangle_{R^*,R} -\cR^*[\bar{\vartheta}].
\end{aligned}
\end{equation*}
\Cref{assumption:general-problem} ensures that $h_2$ is a Carath\'eodory, i.e., continuous in $(x_1,x_2)$ for every $\omega$ and measurable in $\omega$ for every $(x_1,x_2)$. Additionally, $h_2$ is everywhere finite and convex with respect to $(x_1,x_2)$; recall that the monotonicity of $\cR$ implies $\bar{\vartheta} \geq 0$ a.s. Note also that $\bar{\lambda} \in \Lambda_0$ implies $\bar{\lambda}_i(\omega) \in K^\oplus$ a.s., giving convexity of the mapping $(x_1,x_2) \mapsto \langle \bar{\lambda}_i(\omega), i(x_1,x_2;\omega)\rangle_{R^*,R}.$ As a result, $h_2$ is a normal convex integrand. Let $\mathfrak{X}:=L^\infty(\Omega,X_1)\times L^\infty(\Omega,X_2)$.
The growth conditions from \Cref{assumption:general-problem} ensure that for any $r>0$, there exist $a_r \in L^p(\Omega)$ and positive constants $b_{r,e}$ and $b_{r,i}$ such that for all $\chi=(\chi_1, \chi_2) \in \mathfrak{X}$ satisfying $\lVert \chi \rVert_{\mathfrak{X}} \leq r$, we have %Note: we use generalized Cauchy-Schwarz here; See Bonnans (2.12)
\begin{align*}
|h_2(\chi_1(\omega), \chi_2(\omega);\omega)| &\leq |J_2(\chi_1(\omega), \chi_2(\omega);\omega)\bar{\vartheta}(\omega)|  +|\langle \bar{\lambda}_e(\omega), e(\chi_1(\omega), \chi_2(\omega);\omega)\rangle_{W^*,W}| \\
&\quad \quad +  |\langle \bar{\lambda}_i(\omega), i(\chi_1(\omega), \chi_2(\omega);\omega)\rangle_{R^*,R}| + |\cR^*[\bar{\vartheta}]| \\
& \leq |a_r(\omega)||\bar{\vartheta}(\omega)| + b_{r,e} \lVert \bar{\lambda}_e(\omega)\rVert_{W^*} + b_{r,i}  \lVert \bar{\lambda}_i(\omega)\rVert_{R^*}+ |\cR^*[\bar{\vartheta}]| =:k_r(\omega).
\end{align*}
It is straightforward to show that $k_r \in L^1(\Omega)$. Since $r$ is arbitrary, 
$h_2(\chi_1(\omega),\chi_2(\omega);\omega)$ is integrable for all $\chi=(\chi_1, \chi_2) \in \mathfrak{X}$. In particular, the functional
\begin{equation*}
\begin{aligned}
H_2(\chi) &= \int_{\Omega} J_2(\chi_1(\omega),\chi_2(\omega); \omega)\bar{\vartheta}(\omega) + \langle \bar{\lambda}_e(\omega), e(\chi_1(\omega),\chi_2(\omega);\omega)\rangle_{W^*,W} \D \pP(\omega) \\
&\quad \quad + \int_{\Omega} \langle \bar{\lambda}_i(\omega), i(\chi_1(\omega),\chi_2(\omega);\omega)\rangle_{R^*,R} \D \pP(\omega) -\cR^*[\bar{\vartheta}]
\end{aligned}
\end{equation*}
is well-defined and finite on $\mathfrak{X}$ by \Cref{lemma:duality-conjugate-functions}; it is continuous on $\mathfrak{X}$ since for an arbitrary $r>0$ it is always possible to bound $H_2$ for all $\chi$ satisfying $\lVert \chi\rVert_{\mathfrak{X}}\leq r$ due to
\begin{equation*}
H_2(\chi) = \int_\Omega h_2(\chi_1(\omega),\chi_2(\omega);\omega) \D \pP(\omega) \leq \int_\Omega k_r(\omega) \D \pP(\omega) < \infty.
\end{equation*}
Since the domain of $H_2$ is $\mathfrak{X}$, it follows by, e.g., \cite[Proposition 2.107]{Bonnans2013} that $H_2$ is continuous on $\mathfrak{X}.$
  
Let $\iota\colon X \rightarrow \mathfrak{X}$
be the continuous injection, which maps elements of $X_1$ to the
corresponding constant in $L^\infty(\Omega,X_1)$ and maps each element
of $L^\infty(\Omega,X_2)$ to itself. In particular, $\iota(\bx) = (\bar{\chi}_1(\cdot), \bar{\chi}_2(\cdot))$ with 
 $(\bar{\chi}_1(\omega),\bar{\chi}_2(\omega)) = (\bx_1,\bx_2(\omega))$ a.s., implying
\begin{equation}
\label{eq:definition-H2}
\begin{aligned}
H_2(\iota(\bar{x})) &=\int_{\Omega} J_2(\bar{x}_1,\bar{x}_2(\omega); \omega)\bar{\vartheta}(\omega)  + \langle \bar{\lambda}_e(\omega), e(\bx_1,\bx_2(\omega);\omega)\rangle_{W^*,W} \D \pP(\omega) \\
&\quad \quad + \int_{\Omega} \langle \bar{\lambda}_i(\omega), i(\bx_1,\bx_2(\omega);\omega)\rangle_{R^*,R} \D \pP(\omega) -\cR^*[\bar{\vartheta}]\\
&= \cR[J_2(\bx)] +  \E[\langle \lambda_e, e(\bx) \rangle_{W^*,W}+\langle \lambda_i, i(\bx) \rangle_{R^*,R}],
\end{aligned}
\end{equation}
where the last equality follows since $\bar{\vartheta} \in \partial \cR[J_2(\bx)]$.

Combining \eqref{eq:optimality-saddle-point-primal}, \eqref{eq:definition-H1} and \eqref{eq:definition-H2}, we have
\begin{equation*}
\inf_{x \in X_0} \bar{L}(x,\bz,\bz^\circ) =  \bar{L}(\bx,\bz,\bz^\circ) = H_1(\bx)+ H_2(\iota(\bx)).
\end{equation*}

Since $H_2$ is continuous on $\mathfrak{X}$ and $\bx \in \textup{dom}(H_1)$ (note $x \in X_0$ implies $x_1 \in C$),  
it is possible to invoke the Moreau--Rockafellar theorem (cf.~\cite[Theorem 2.168]{Bonnans2013})
to obtain
\begin{equation*}
0 \in \partial H_1(\bx) + \iota^* \partial H_2(\iota (\bx)).
\end{equation*}
In particular, there exists
$\bar{q} \in \mathfrak{X}^*= (L^\infty(\Omega,X_1) \times L^\infty(\Omega,X_2))^*$ such that 
\begin{equation*}
  -\iota^* \bar{q} \in \partial H_1(\bx) \quad \text{and} \quad \bar{q} \in \partial H_2(\iota(\bx)).
\end{equation*}

From \Cref{cor:characterization-subdifferentials}, it follows that $\partial H_2(\iota(\bx))$
consists of continuous linear functionals on
$L^\infty(\Omega,X_1) \times L^\infty(\Omega,X_2)$, which can be identified with
pairs $(\bar{q}_1,\bar{q}_2)$ from a weakly compact subset of $L^1(\Omega,X_1^*) \times L^1(\Omega,X_2^*)$ such that
\begin{equation}
\label{eq:subdifferential-h2}
\bar{q}(\omega) = (\bar{q}_1(\omega),\bar{q}_2(\omega)) \in \partial h_2(\bx_1, \bx_2(\omega);\omega) \quad \text{a.s.}
\end{equation}
Let $\iota_1\colon X_1 \rightarrow L^\infty(\Omega,X_1)$ be the continuous injection such that $\iota(x) = (\iota_1 x_1, x_2)$.
Notice that for $\bar{q}_1 \in L^1(\Omega,X_1^*)$, the adjoint
$\iota_1^*\colon (L^\infty(\Omega,X_1))^* \rightarrow X_1^*$ satisfies
\begin{align*}
\langle \iota_1^* \bar{q}_1,\u \rangle_{X_1^*,X_1} & =  \langle \bar{q}_1, \iota_1 \u \rangle_{L^1(\Omega,X_1^*),L^\infty(\Omega,X_1)} = \E[ \langle \bar{q}_1(\cdot), x_1\rangle_{X_1^*,X_1}] \quad \forall x_1 \in X_1. 
\end{align*}
Hence $\iota^* \bar{q} = (\E[\bar{q}_1], \bar{q}_2) \in X_1^* \times L^1(\Omega,X_2^*).$
Thus $-\iota^* \bar{q} \in \partial H_1(\bx)$ can be equivalently written as
\begin{equation*}
  H_1(x) \geq H_1(\bx) - \langle \E[\bar{q}_1], \u - \bx_1\rangle_{X_1^*,X_1} -\E[\langle \bar{q}_2, \y - \bx_2\rangle_{X_2^*,X_2}]
\end{equation*}
for all $x \in X$. We obtain from \eqref{eq:definition-H1}
\begin{equation}
\label{eq:inequality-KKT-proof-0}
J_1(\u) + \ell(x_1, \bar{\lambda}^\circ) \geq J_1(\bx_1) +\ell(\bx_1, \bar{\lambda}^\circ) - \langle \E[\bar{q}_1], \u - \bx_1 \rangle_{X_1^*,X_1} \quad \forall \u \in C
\end{equation}
and
\begin{equation}
\label{eq:inequality-KKT-proof-1}
\E[\langle \bar{q}_2, \y - \bx_2\rangle_{X_2^*,X_2}] \geq 0 \quad \forall \y \in L^\infty(\Omega,X_2).
\end{equation}
The expression~\eqref{eq:inequality-KKT-proof-0} is clearly equivalent to condition (i). Moreover, \eqref{eq:inequality-KKT-proof-1} implies $\bar{q}_2(\omega) = 0$ a.s. In combination with~\eqref{eq:subdifferential-h2}, this implies that for all
$(x_1,x_2) \in X_1 \times X_2$,
\begin{equation*}
  h_2(\u,\y;\omega)\geq h_2(\bx_1, \bx_2(\omega);\omega) + \langle \bar{q}_1(\omega), \u - \bx_1 \rangle_{X_1^*,X_1} \quad \text{a.s.}
\end{equation*}
From the definition of $h_2$, it follows that
\begin{equation}
\label{eq:inequality-KKT-proof-3}
\begin{aligned}
&J_2(\u,\y;\omega)\bar{\vartheta}(\omega) + \langle \bar{\lambda}_e(\omega), e(\u,\y;\omega)\rangle_{W^*,W} \\
&\quad + \langle \bar{\lambda}_i(\omega), i(\u,\y;\omega) \rangle_{R^*,R} - \langle \bar{q}_1(\omega), \u\rangle_{X_1^*,X_1} \\
&\quad\quad\geq J_2(\bx_1,\bx_2(\omega);\omega)\bar{\vartheta}(\omega) + \langle \bar{\lambda}_e(\omega),e(\bx_1,\bx_2(\omega);\omega) \rangle_{W^*,W} \\
&\quad\quad\quad+ \langle \bar{\lambda}_i(\omega), i(\bx_1, \bx_2(\omega);\omega)\rangle_{R^*,R} - \langle \bar{q}_1(\omega), \bx_1\rangle_{X_1^*,X_1}
\end{aligned}
\end{equation}
for all $(x_1,x_2) \in X_1 \times X_2.$
The inequality~\eqref{eq:inequality-KKT-proof-3} is clearly equivalent to
condition (ii) with $\bar{\rho}(\omega):=\bar{q}_1(\omega)$. 
\end{proof}

\begin{rem}
\label{rem:weak-compactness-subdifferential-J2}
Implicitly proven in \Cref{thm:extended-KKT-conditions} are some additional properties that will later be convenient. First, $f(x_1, x_2;\omega):=J_2(x_1,x_2;\omega)\bar{\vartheta}(\omega)$ is Carath\'eodory and convex with respect to $(x_1,x_2)$. The growth conditions ensure that $F(\chi) =\E[ f(\chi_1(\cdot),\chi_2(\cdot);\cdot)]$ is continuous with respect to $\chi \in \mathfrak{X}=L^\infty(\Omega,X_1)\times L^\infty(\Omega,X_2)$. Therefore, an element of the subdifferential $\partial F(\iota(\bx))$ can be identified with a pair $(\bar{\phi}_1,\bar{\phi}_2)$ belonging to a weakly compact subset in $L^1(\Omega,X_1^*)\times L^1(\Omega,X_2^*)$ such that
\begin{equation*}
\bar{\phi}(\omega) = (\bar{\phi}_1(\omega),\bar{\phi}_2(\omega)) \in \partial f(\bx_1,\bx_2(\omega);\omega) = \partial J_{2}(\bx_1,\bx_2(\omega);\omega) \bar{\vartheta}(\omega) \quad \text{a.s.} 
\end{equation*}
The equality of the subdifferentials follows since $\bar{\vartheta} \geq 0$ a.s.

Additionally, we show that $\bar{\zeta}$ satisfying $\bar{\zeta}(\omega) \in \partial J_{2}(\bx_1,\bx_2(\omega);\omega)$ a.s.~belongs to the set $L^p(\Omega,X_1^*) \times L^p(\Omega,X_2^*)$. By definition of the subdifferential, we have for all $\tilde{\chi} \in \mathfrak{X}$
\begin{equation*}
\langle \bar{\zeta}(\omega), \tilde{\chi}(\omega)-\bx(\omega)\rangle_{X_1^* \times X_2^*, X_1\times X_2} \leq J_2(\tilde{\chi}_1(\omega), \tilde{\chi}_2(\omega); \omega) - J_2(\bx_1, \bx_2(\omega);\omega) \quad \text{a.s.}
\end{equation*}
Hence with $\chi:=\tilde{\chi}-\bx$ we obtain $\langle \bar{\zeta}(\omega), \chi(\omega)\rangle_{X_1^* \times X_2^*, X_1\times X_2} \leq J_2(\chi_1(\omega)+\bx_1, \chi_2(\omega)+\bx_2(\omega); \omega)-J_2(\bx_1, \bx_2(\omega); \omega)$ a.s. For any $r>0$, there exists by \Cref{subasu:general-iii} a function $a_r \in L^p(\Omega)$ such that $|J_2(\chi_1(\omega),\chi_2(\omega);\omega)| \leq a_{r}(\omega)$ for $\chi$ satisfying $\lVert \chi \rVert_{\mathfrak{X}} < r$. With $r>0$ chosen large enough we get
\begin{equation*}
\lVert \bar{\zeta}(\omega) \rVert_{X_1^* \times X_2^*} = \sup_{\chi(\omega):\lVert \chi(\omega) \rVert_{X_1 \times X_2}\leq 1 } \langle \bar{\zeta}(\omega), \chi(\omega)\rangle_{X_1^* \times X_2^*, X_1\times X_2} \leq 2a_r(\omega),
\end{equation*}
so it follows that $\E[\lVert \bar{\zeta}_i \rVert^p_{X_i^*}]< \infty$ for $i=1,2$.
Since $\bar{\zeta}$ is measurable, it follows that $\bar{\zeta} \in L^p(\Omega,X_1^*) \times L^p(\Omega,X_2^*)$ as claimed.

\end{rem}

\subsection{Existence of regular multipliers}
\label{subsec:regular-mutlipliers}

In this section, we refine the results by \cite{Geiersbach2020+} to include objective functions with risk measures. The additional \Cref{assumption:relatively-complete-recourse} ensures that Lagrange multipliers are more regular.

\begin{lemma}
\label{lemma:saddle-points-lagrangian}
Let \Cref{assumption:general-problem}, \Cref{assumption:existence-saddle-points}, and \Cref{assumption:relatively-complete-recourse} be satisfied. Then there exists a saddle point $(\bar{x},\bar{\lambda}) \in X_0 \times \Lambda_0$ to the Lagrangian $L$, i.e., $(\bar{x},\bar{\lambda})$ satisfies
\begin{equation}
\label{eq:saddle-point-Lagrangian}
L(\bx, \lambda) \leq L(\bx,\bar{\lambda}) \leq L(x,\bar{\lambda}) \quad \forall (x,\lambda) \in X \times \Lambda.
\end{equation}
\end{lemma}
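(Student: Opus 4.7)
The plan is to bootstrap from the extended saddle point result. Since \Cref{assumption:general-problem} and \Cref{assumption:existence-saddle-points} hold, the preceding saddle point lemma furnishes $(\bar{x}, \bar{\lambda}, \bar{\lambda}^\circ) \in X_0 \times \Lambda_0 \times \Lambda_0^\circ$ satisfying \eqref{eq:saddle-point-extended-Lagrangian}, and by \Cref{thm:extended-KKT-conditions} there exist $\bar{\rho} \in L^1(\Omega, X_1^*)$ and $\bar{\vartheta} \in \partial \mathcal{R}[J_2(\bar{x})]$ for which conditions (i)--(iv) hold. The goal is to show that this same $(\bar{x}, \bar{\lambda})$ is already a saddle point of $L$ alone, i.e., that under relatively complete recourse the singular component $\bar{\lambda}^\circ$ may effectively be dropped.

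The core step is the one-sided bound $\ell(x_1, \bar{\lambda}^\circ) \leq 0$ for every $x_1 \in C$. For such an $x_1$, \Cref{assumption:relatively-complete-recourse} (namely $C \subset \tilde{C}$) supplies a companion $x_2 \in L^\infty(\Omega, X_2)$ with $e(x_1, x_2(\omega); \omega) = 0$ and $i(x_1, x_2(\omega); \omega) \leq_K 0$ almost surely. The equality residual vanishes in $L^\infty(\Omega, W)$, so the $\bar{\lambda}_e^\circ$-term contributes zero; since $-i(x_1, x_2(\cdot); \cdot) \in L^\infty(\Omega, R)$ takes values in $K$ almost surely, the definition of $\mathcal{K}^\oplus$ gives $\langle \bar{\lambda}_i^\circ, i(x_1, x_2) \rangle_{(L^\infty(\Omega, R))^*, L^\infty(\Omega, R)} \leq 0$. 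Hence $L^\circ((x_1, x_2), \bar{\lambda}^\circ) \leq 0$, and taking the infimum over $x_2$ yields $\ell(x_1, \bar{\lambda}^\circ) \leq 0$.

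Combining condition (i) of \Cref{thm:extended-KKT-conditions} with the identity $\ell(\bar{x}_1, \bar{\lambda}^\circ) = 0$ from (iv) and the inequality just derived, one concludes that $\bar{x}_1$ minimizes $x_1 \mapsto J_1(x_1) + \langle \E[\bar{\rho}], x_1 \rangle_{X_1^*, X_1}$ over $C$ with the singular term removed. Integrating condition (ii) over $\omega$ and invoking the Fenchel identities $\E[J_2(\bar{x}) \bar{\vartheta}] - \mathcal{R}^*[\bar{\vartheta}] = \mathcal{R}[J_2(\bar{x})]$ and $\E[J_2(x) \bar{\vartheta}] - \mathcal{R}^*[\bar{\vartheta}] \leq \mathcal{R}[J_2(x)]$ from \eqref{eq:biconjugate-risk-functional} and \eqref{eq:subdifferentiability-expression}, then adding the first-stage inequality, yields $L(\bar{x}, \bar{\lambda}) \leq L(x, \bar{\lambda})$ for every $x \in X_0$, and hence for every $x \in X$ by the standard convention. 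The dual inequality $L(\bar{x}, \lambda) \leq L(\bar{x}, \bar{\lambda})$ is immediate: for $\lambda \notin \Lambda_0$ the left-hand side is $-\infty$, while for $\lambda \in \Lambda_0$ primal feasibility together with complementarity from (iii) makes the difference non-positive.

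I expect the main obstacle to be the key inequality $\ell(x_1, \bar{\lambda}^\circ) \leq 0$ on all of $C$: this is precisely where \Cref{assumption:relatively-complete-recourse} plays an indispensable role, since in its absence $\ell(\cdot, \bar{\lambda}^\circ)$ may take the value $-\infty$ on otherwise feasible first-stage points, thereby expressing second-stage infeasibility and forcing the singular term in $\bar{L}$ to remain active. Everything else in the argument is a structural rearrangement of the KKT system established in \Cref{thm:extended-KKT-conditions}.
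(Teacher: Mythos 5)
Your proposal is correct, but it takes a different route from the paper. The paper proves this lemma purely at the level of value functions: \Cref{thm:minP-supD} gives $-\infty < \min \textup{P} = \sup \textup{D}$, and \Cref{thm:infP-maxD} gives $\inf \textup{P} = \max \textup{D}$, where the latter establishes $\bar{g}(\lambda,\lambda^\circ) \leq g(\lambda)$ via Fenchel's duality theorem applied in the first-stage variable to $h(x_1) - k(x_1)$ with $k(x_1) = -\ell(x_1,\lambda^\circ)$; combining the two theorems yields attainment on both sides and hence the saddle point. You instead bootstrap from the already-established saddle point of $\bar{L}$ and its KKT characterization in \Cref{thm:extended-KKT-conditions}, and show that relatively complete recourse forces $\ell(x_1,\bar{\lambda}^\circ) \leq 0$ on all of $C$, so that the singular terms can be stripped from condition (i) and the remaining conditions (i)--(iii) are exactly those of \Cref{thm:KKT-basic-conditions}, whose sufficiency direction then delivers the saddle point of $L$. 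Both arguments hinge on the same key fact --- $\ell(x_1,\lambda^\circ) \leq 0$ for $x_1 \in \tilde{C} \supset C$, which you derive correctly from the definition of $\mathcal{K}^\oplus$ --- but the paper deploys it inside a Fenchel duality argument on the dual value functions, whereas you deploy it inside the pointwise KKT system. Your route is arguably more economical given that \Cref{thm:extended-KKT-conditions} is already available, and it has the additional virtue of exhibiting the saddle point of $L$ explicitly as the $(x,\lambda)$-part of the extended saddle point; the paper's route is more self-contained at the level of duality theory and does not require the (heavier) KKT machinery, which in the paper's logical ordering is only developed after the existence of saddle points is secured. Neither argument is circular, since the extended saddle point lemma and \Cref{thm:extended-KKT-conditions} require only \Cref{assumption:general-problem} and \Cref{assumption:existence-saddle-points}.
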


\begin{proof}
This is proven in \Cref{thm:minP-supD} and \Cref{thm:infP-maxD}.
\label{thm:infP-maxD}
\end{proof}

\begin{theorem}
\label{thm:KKT-basic-conditions}
Let \Cref{assumption:general-problem} be satisfied. Then $(\bar{x}, \bar{\lambda}) \in X_0 \times \Lambda_0$ satisfies \eqref{eq:saddle-point-Lagrangian} if and only if there exist
$\bar{\rho} \in L^1(\Omega,X_1^*)$ and $\bar{\vartheta} \in \partial \cR[J_2(\bx)]$ fulfilling
the following conditions:
\begin{enumerate}[label=(\roman*)]
\item The function
  \begin{equation*}
    x_1 \mapsto J_1(x_1) + \langle \E[\bar{\rho}], x_1 \rangle_{X_1^*,X_1}
  \end{equation*}
attains its
minimum over $C$ at $\bx_1$. \label{eq:FOC-basic-problem1}
\item The function 
\begin{align*}(x_1,x_2) \mapsto & \, J_2(x_1,x_2; \omega)\bar{\vartheta}(\omega) + \langle \bar{\lambda}_e(\omega), e(x_1,x_2;\omega)\rangle_{W^*,W} \\
\quad &+ \langle \bar{\lambda}_i(\omega),i(x_1,x_2;\omega) \rangle_{R^*,R} - \langle \bar{\rho}(\omega), x_1 \rangle_{X_1^*,X_1} 
\end{align*}
attains its minimum in $X_1 \times X_2$ at $(\bx_1,\bx_2(\omega))$ for almost
every $\omega \in \Omega$.  \label{eq:FOC-basic-problem2}
\item It holds that $\bx_1 \in C$ and the following conditions hold almost
  surely:
\begin{align*}
e(\bx_1, \bx_2(\omega);\omega)&=0,\\
\bar{\lambda}_i(\omega)\in K^\oplus,\quad i(\bx_1, \bx_2(\omega);\omega) \leq_K 0,& \quad \langle \bar{\lambda}_i(\omega), i(\bx_1, \bx_2(\omega);\omega) \rangle_{R^*,R} = 0.
\end{align*}
\label{eq:FOC-basic-problem3}
\end{enumerate}
\end{theorem}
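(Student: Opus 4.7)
The plan is to mirror the structure of the proof of \Cref{thm:extended-KKT-conditions} but drop the singular components $(\bar{\lambda}_e^\circ,\bar{\lambda}_i^\circ)$ and the auxiliary function $\ell$, which are no longer needed once saddle points for $L$ itself exist by \Cref{lemma:saddle-points-lagrangian} under \Cref{assumption:relatively-complete-recourse}. The equivalence will be shown in two directions.

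For the direction that (i)--(iii) imply the saddle-point property, I would start from $\bar{\vartheta} \in \partial \cR[J_2(\bx)]$, which by \eqref{eq:biconjugate-risk-functional}--\eqref{eq:subdifferentiability-expression} yields the Fenchel--Young equality $\E[J_2(\bx)\bar{\vartheta}] - \cR^*[\bar{\vartheta}] = \cR[J_2(\bx)]$ and the inequality $\E[J_2(x)\bar{\vartheta}] - \cR^*[\bar{\vartheta}] \le \cR[J_2(x)]$ for every $x \in X$. Adding and subtracting the term $\langle \E[\bar{\rho}], x_1 \rangle_{X_1^*,X_1}$, rewriting $L(\bx,\bar{\lambda})$ using (iii) (which makes the $\bar{\lambda}_e$-term vanish and the complementarity kill the $\bar{\lambda}_i$-term), and invoking (i) and (ii) in their ``almost sure'' form together with monotonicity of the expectation, one obtains $L(\bx,\bar{\lambda}) \le L(x,\bar{\lambda})$ for all $x \in X$. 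For the other saddle-point inequality, I would observe that condition (iii) makes $\E[\langle \lambda_e - \bar{\lambda}_e, e(\bx)\rangle_{W^*,W}] = 0$ for all $\lambda_e \in L^1(\Omega,W^*)$, while $\bar{\lambda}_i(\omega) \in K^\oplus$ and $i(\bx_1,\bx_2(\omega);\omega) \leq_K 0$ a.s.~together with the complementarity condition yield $\E[\langle \lambda_i - \bar{\lambda}_i, i(\bx)\rangle_{R^*,R}] \le 0$ for all $\lambda_i \in \Lambda_0$, which gives $L(\bx,\lambda) \le L(\bx,\bar{\lambda})$.

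For the converse, assume $(\bx,\bar{\lambda})$ is a saddle point. Varying $\lambda \in \Lambda$ in $L(\bx,\lambda) \le L(\bx,\bar{\lambda}) < \infty$ immediately forces $e(\bx_1,\bx_2(\omega);\omega) = 0$ a.s.\ and $i(\bx_1,\bx_2(\omega);\omega) \leq_K 0$ a.s., and the complementarity $\langle \bar{\lambda}_i(\omega), i(\bx_1,\bx_2(\omega);\omega)\rangle_{R^*,R} = 0$ a.s.\ follows from $\bar{\lambda}_i \in \Lambda_0$ combined with the fact that one can separately take $\lambda_i = 0$ and $\lambda_i = 2\bar{\lambda}_i$. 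This establishes (iii). For (i)--(ii), use $\bx \in \argmin_{x \in X_0} L(x,\bar{\lambda})$ and introduce, as in the proof of \Cref{thm:extended-KKT-conditions}, the functional $H_1(x) := J_1(x_1) + \delta_{X_0}(x)$ on $X$ and the integral functional
\[
H_2(\chi) := \int_\Omega h_2(\chi_1(\omega),\chi_2(\omega);\omega)\,\D\pP(\omega)
\]
on $\mathfrak{X} := L^\infty(\Omega,X_1) \times L^\infty(\Omega,X_2)$, where $h_2(x_1,x_2;\omega) := J_2(x_1,x_2;\omega)\bar{\vartheta}(\omega) + \langle \bar{\lambda}_e(\omega), e(x_1,x_2;\omega)\rangle_{W^*,W} + \langle \bar{\lambda}_i(\omega), i(x_1,x_2;\omega)\rangle_{R^*,R} - \cR^*[\bar{\vartheta}]$. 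The growth conditions of \Cref{assumption:general-problem} together with $\bar{\vartheta} \in L^{p'}(\Omega)$ and $\bar{\lambda} \in \Lambda$ show that $h_2$ is a normal convex integrand that is integrable for every $\chi \in \mathfrak{X}$, so $H_2$ is finite and continuous on $\mathfrak{X}$. With $\iota\colon X \to \mathfrak{X}$ the natural injection (embedding $x_1 \in X_1$ as the constant function in $L^\infty(\Omega,X_1)$), one has, using the Fenchel--Young identity for $\bar{\vartheta}$ at $J_2(\bx)$, the identity $L(x,\bar{\lambda}) = H_1(x) + H_2(\iota(x))$ on $X_0$.

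Since $H_2$ is continuous on $\mathfrak{X}$ and $\bx \in \textup{dom}(H_1)$, the Moreau--Rockafellar theorem yields existence of $\bar{q} \in \partial H_2(\iota(\bx))$ with $-\iota^* \bar{q} \in \partial H_1(\bx)$. By \Cref{cor:characterization-subdifferentials}, $\bar{q}$ can be identified with $(\bar{q}_1,\bar{q}_2) \in L^1(\Omega,X_1^*) \times L^1(\Omega,X_2^*)$ satisfying $(\bar{q}_1(\omega),\bar{q}_2(\omega)) \in \partial h_2(\bx_1,\bx_2(\omega);\omega)$ a.s., while the adjoint of the injection on the $X_1$ component sends $\bar{q}_1$ to $\E[\bar{q}_1]$. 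Testing $-\iota^*\bar{q} \in \partial H_1(\bx)$ against arbitrary variations in $x_2 \in L^\infty(\Omega,X_2)$ forces $\bar{q}_2 \equiv 0$ a.s., and testing in $x_1 \in C$ yields exactly condition (i) with $\bar{\rho} := \bar{q}_1$ and $\E[\bar{\rho}]$ as the linear correction. Feeding $\bar{q}_2 \equiv 0$ back into the pointwise subdifferential inclusion for $h_2$ translates into the minimization statement (ii). The main conceptual obstacle, as in the extended case, is the correct handling of the injection $\iota$ and its adjoint (so that the first-stage multiplier appears as the expectation $\E[\bar{\rho}]$ in (i) and as $\bar{\rho}(\omega)$ in (ii)); once this is organized, no additional difficulty arises because the absence of singular multipliers makes the $\ell$-term and \Cref{lemma:technical-for-singular-terms} unnecessary.
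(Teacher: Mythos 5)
Your proposal is correct and follows essentially the same route as the paper, whose own proof of this theorem consists of the single remark that one repeats the argument of \Cref{thm:extended-KKT-conditions} with $L^\circ \equiv 0$; your writeup is a faithful expansion of exactly that argument (Fenchel--Young for $\bar{\vartheta}$, complementarity from varying $\lambda$, and Moreau--Rockafellar applied to $H_1$ and $H_2\circ\iota$ with the injection $\iota$ producing $\E[\bar{\rho}]$). The only imprecision is your claim that $L(x,\bar{\lambda}) = H_1(x) + H_2(\iota(x))$ holds identically on $X_0$: by Fenchel--Young one only has $H_1(x)+H_2(\iota(x)) \le L(x,\bar{\lambda})$ with equality at $\bx$, so that $\bx$ minimizing $H_1+H_2\circ\iota$ requires the extra observation that $\bar{\vartheta}$ maximizes $\vartheta \mapsto \E[J_2(x)\vartheta]-\cR^*[\vartheta]$ at $x=\bx$ --- but this is precisely the step the paper itself takes in \Cref{thm:extended-KKT-conditions}, so it is not a deviation from the reference proof.
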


\begin{proof}
The proof follows the arguments from \Cref{thm:extended-KKT-conditions} after setting $L^\circ \equiv 0$. 
\end{proof}

In the risk-neutral case $\cR = \E$, the subdifferential consists of the unique subgradient $\bar{\vartheta} \equiv 1.$ In that case, \Cref{thm:KKT-basic-conditions} recovers the optimality conditions presented in \cite{Geiersbach2020+}.

\section{Moreau--Yosida Regularization}
\label{sec:Moreau-Yosida}
Optimal control problems with state constraints were first handled in \cite{Casas1986} and the use of a Moreau--Yosida regularization for such problems was first studied in \cite{Hintermueller2006, Hintermueller2006a}. In this section, we detail the use of Moreau--Yosida regularization for the state constraint
 \begin{equation}
\label{eq:conical-constraint}
i(x_1,x_2(\omega);\omega) \leq_K 0 \quad \quad (\Leftrightarrow -i(x_1,x_2(\omega);\omega) \in K)
\end{equation}
from Problem~\eqref{eq:model-problem-abstract-long}, i.e., in the novel setting of stochastic optimization problem with almost sure constraints.

\subsection{Moreau--Yosida Revisited}
\label{subsection-MY-revisited}
In applications, it is often the case that $K$ coincides with a cone on a weaker space than $R$, the space where $K$ has a nonempty interior. Let $(H, (\cdot,\cdot)_H)$ be a Hilbert space.
%, where $H$ is identified with its dual via the Riesz representation theorem. 
We consider cases in which the image space $R$ of the conical constraint belongs to a Gelfand triple in the sense that
\begin{equation}
\label{eq:rigged-space}
R \hookrightarrow H \cong H^* \hookrightarrow R^*,
\end{equation}
where $\hookrightarrow$ denotes the dense and continuous embedding. In particular $\langle h, r \rangle_{R^*, R} = (h,r)_{H}$ whenever $r \in R$ and $h \in H$. We assume the cone $K$ is compatible with a cone $K_H$ on the less regular space $H$ in the sense that $K_H \cap R = K$. This construction is quite natural and allows the construction of a regularization with respect to the weaker $H$-norm, where computations are cheaper. Notably, since $i$ has an image in $R$, it follows that
\begin{equation}
\label{eq:equivalence-inequality-cones}
i(x_1, x_2(\omega);\omega) \leq_{K_H} 0 \quad \Leftrightarrow \quad i(x_1, x_2(\omega);\omega) \leq_{K} 0. 
\end{equation}
Examples of applications are given at the end of this section. 

Given a parameter $\gamma >0$, the Moreau--Yosida regularization (or envelope) for $\delta_{K_H}$ is given by the function $\beta^\gamma: H \rightarrow \R$ with
\begin{equation*}
\label{eq:MY-function}
\beta^\gamma(k) = \inf_{y \in H} \left\lbrace \delta_{K_H}(y) +\frac{\gamma}{2} \lVert k-y \rVert_H^2 \right\rbrace.
\end{equation*}

Clearly, $\beta^\gamma$ has the property that $\beta^\gamma(k)=0$ whenever $k \in {K_H}$, and otherwise $\beta^\gamma(k) >0$. Additionally,
\begin{equation}
\label{eq:explicit-formula-MY-term}
\beta^\gamma(k) =\frac{\gamma}{2}  \inf_{y \in H} \left\lbrace \delta_{K_H}(y) + \lVert k-y\rVert_{H}^2 \right\rbrace = \frac{\gamma}{2} \lVert k-\pi_{K_H}(k)\rVert_{H}^2,
\end{equation}
where $\pi_{K_H}\colon H \rightarrow {K_H}, k \mapsto \argmin_{y \in H} \{ \delta_{K_H}(y) + \lVert k-y\rVert_{H}\}$ denotes the projection onto $K_H$. Let $K_H^\oplus= \{ h \in H\colon (h,k)_H \geq 0 \, \forall k \in K_H \}$ denote the dual cone to $K$. The projection $\pi_{K_H}$ onto a nonempty, closed, convex cone $K_H$ admits the characterization (cf. \cite[Proposition 6.27]{Bauschke2011})
\begin{equation}
\label{eq:projection-inequality}
\begin{aligned}
\pi_{K_H}(x) = p \quad \Leftrightarrow \quad p \in K_H, (p,x-p)_H = 0, \text{ and } p-x \in K_H^\oplus.
% \quad & \Leftrightarrow \quad p \in K_H \text{ and } \forall y \in K_H: (y-p,x-p)_H \leq 0\\
\end{aligned}
\end{equation}
%The last equivalence in \eqref{eq:projection-inequality} can be readily shown by choosing $y = 0$ and $y=2p$. 
Moreover (cf.~\cite[Proposition 12.29]{Bauschke2011}), $\beta^\gamma$ is Fr\'echet differentiable on $H$ with the gradient $\nabla \beta^\gamma =\gamma (\textup{id}_H - \pi_{K_H})$.

The following result is proven in Appendix~\ref{sec:appendix-additional-proofs} and is needed in the following \Cref{lemma:properties-MY-regularization}.
\begin{lemma}
\label{lemma:concatenation-convex-functions}
Let $Y$ and $Z$ be real Banach spaces and $K \subset Z$ be a nonempty, closed, and convex cone. Suppose $f\colon Y \rightarrow Z$ is $K$-convex and $g: Z \rightarrow \R$ is convex having the property that $g(z) = 0$ if $z \in K$. Then $g \circ (-f)$ is convex.
\end{lemma}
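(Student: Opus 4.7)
The plan is to reduce the statement to an intermediate monotonicity property for $g$, and then combine that property with $K$-convexity of $f$. The key observation is that $K$-convexity of $f$ produces
\[
-f(\lambda y_1 + (1-\lambda) y_2) = \lambda\, (-f(y_1)) + (1-\lambda)(-f(y_2)) + k, \qquad k \in K,
\]
so the only obstruction to convexity of $g \circ (-f)$ is the extra $k$-term. Consequently, it will suffice to show that $g$ is monotone nonincreasing in $K$-directions, i.e.,
\[
g(z + k) \leq g(z) \qquad \text{for every } z \in Z \text{ and } k \in K.
\]
Once this is available, convexity of $g$ applied to the point $\lambda(-f(y_1)) + (1-\lambda)(-f(y_2))$ yields the desired estimate
\[
g(-f(\lambda y_1 + (1-\lambda) y_2)) \leq g(\lambda(-f(y_1)) + (1-\lambda)(-f(y_2))) \leq \lambda\, g(-f(y_1)) + (1-\lambda)\, g(-f(y_2)).
\]

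The crux is therefore to prove this monotonicity from only the hypotheses $g|_K = 0$ and convexity of $g$, without any continuity assumption. My approach is a two-step convex-combination argument. First, fix $s \in (0,1)$ and use, for each integer $n \geq 1$, the decomposition
\[
z + n k \;=\; s \cdot \tfrac{z}{s} \;+\; (1-s) \cdot \tfrac{n}{1-s} k.
\]
Since $K$ is a cone, $\tfrac{n}{1-s} k \in K$, so $g$ vanishes there, and convexity of $g$ gives the bound $g(z + nk) \leq s\, g(z/s)$, which is independent of $n$. Second, using the decomposition $z + k = \tfrac{1}{n}(z + n k) + \tfrac{n-1}{n}\, z$ and applying convexity of $g$ a second time, one gets
\[
g(z + k) \;\leq\; \tfrac{1}{n} g(z + n k) + \tfrac{n-1}{n} g(z) \;\leq\; \tfrac{s\, g(z/s)}{n} + \tfrac{n-1}{n} g(z).
\]
Passing to the limit $n \to \infty$ then yields the desired monotonicity.

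The principal obstacle is spotting this right bootstrapping decomposition: since neither continuity of $g$ nor any a priori monotonicity hypothesis is available, one has to extract it directly from the vanishing of $g$ on $K$. After that, the final step of gluing monotonicity with convexity of $g$ back into the $K$-convex decomposition of $-f$ is routine.
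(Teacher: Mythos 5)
Your proof is correct, and on the decisive step it is actually more careful than the paper's. Both arguments share the same skeleton: use $K$-convexity to write $-f(\lambda y_1+(1-\lambda)y_2)=\lambda(-f(y_1))+(1-\lambda)(-f(y_2))+k$ with $k\in K$, and then eliminate the extra $k$ using $g|_K=0$ together with convexity of $g$. The paper does the elimination by asserting $g(k+w)\leq g(k)+g(w)$ ``by convexity''; as stated this is subadditivity, which does not follow from convexity alone (e.g.\ $g(x)=x^2$ is convex with $g(0)=0$ but not subadditive), so the paper's justification of that line is at best elliptical. What is actually true and needed is exactly the $K$-monotonicity $g(z+k)\leq g(z)$ that you isolate, and your two-stage convex-combination argument --- first bounding $g(z+nk)\leq s\,g(z/s)$ uniformly in $n$ via the cone property, then using $z+k=\tfrac1n(z+nk)+\tfrac{n-1}{n}z$ and letting $n\to\infty$ --- establishes it rigorously without any continuity assumption on $g$, using only that $g$ is real-valued. (Equivalently: $t\mapsto g(z+tk)$ is a real-valued convex function on $[0,\infty)$ that is bounded above, hence nonincreasing.) So your route buys a complete proof of the intermediate inequality that the paper takes for granted; the paper's version is shorter but leaves a genuine gap at that step.
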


\begin{lemma}
\label{lemma:properties-MY-regularization}
Under Assumption ~\ref{assumption:general-problem},  $x \mapsto \E[\beta^\gamma(-i(x))]$ is weakly* lower semicontinuous on $X$ for any $\gamma >0$. Moreover, we have $\E[\beta^\gamma(-i(x))] = 0$ if and only if $i(x_1,x_2(\omega);\omega) \leq_{K} 0$ a.s. 
\end{lemma}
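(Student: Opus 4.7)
The overall strategy is to realize $x\mapsto\E[\beta^\gamma(-i(x))]$ as an integral functional $I_h$ on $X=X_1\times L^\infty(\Omega,X_2)$ for the integrand $h(x_1,x_2;\omega):=\beta^\gamma(-i(x_1,x_2;\omega))$, verify that $h$ satisfies the hypotheses of \Cref{remark:weak-star-lsc-2} (normal convex integrand, everywhere integrable), and then read off the second claim directly from the defining property of $\beta^\gamma$ together with the compatibility $K_H\cap R=K$.

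First I would check that $h$ is a normal convex integrand. Convexity of $h(\cdot,\cdot;\omega)$ follows from \Cref{lemma:concatenation-convex-functions} applied with $f:=i(\cdot,\cdot;\omega)$ (which is $K$-convex by \Cref{subasu:general-iv}, hence also $K_H$-convex when viewed via the embedding $R\hookrightarrow H$, since $K\subset K_H$) and $g:=\beta^\gamma$ (which is convex on $H$ and vanishes on $K_H$, hence on $K$). For the Carath\'eodory property: continuity of $h(\cdot,\cdot;\omega)$ follows by chaining continuity of $i(\cdot,\cdot;\omega)$ from \Cref{subasu:general-iv}, continuity of the embedding $R\hookrightarrow H$, and Fr\'echet differentiability (hence continuity) of $\beta^\gamma$; measurability of $\omega\mapsto h(x_1,x_2;\omega)$ for fixed $(x_1,x_2)$ follows from measurability of $\omega\mapsto i(x_1,x_2;\omega)$ per \Cref{subasu:general-v} composed with the continuous map $\beta^\gamma(-\,\cdot\,)$. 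A Carath\'eodory function is normal, as noted just after the definition in \Cref{sec:preliminaries}.

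Next, for integrability, use the elementary bound $\beta^\gamma(k)=\tfrac{\gamma}{2}\lVert k-\pi_{K_H}(k)\rVert_H^2\le \tfrac{\gamma}{2}\lVert k\rVert_H^2$, which holds because $0\in K_H$ (as $K_H$ is a cone). Combined with the continuous embedding $\lVert\cdot\rVert_H\le c\lVert\cdot\rVert_R$ and the a.s.~bound $\lVert i(x_1,x_2;\omega)\rVert_R\le b_{r,i}$ from \Cref{subasu:general-v}, this gives
\begin{equation*}
0\le h(\chi_1(\omega),\chi_2(\omega);\omega)\le \tfrac{\gamma c^2}{2}\,b_{r,i}^{\,2}\quad\text{a.s.}
\end{equation*}
for every $\chi=(\chi_1,\chi_2)\in L^\infty(\Omega,X_1)\times L^\infty(\Omega,X_2)$ with $\lVert\chi\rVert_{\mathfrak{X}}\le r$, in particular for any $x\in X$ identified with its constant-$x_1$ extension. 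Thus $h(x_1,x_2(\omega);\omega)$ is integrable on $\Omega$ for all $x\in X$, and \Cref{remark:weak-star-lsc-2} yields weak* lower semicontinuity of $I_h$ on $X$.

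Finally, for the characterization, since $\beta^\gamma\ge 0$, one has $\E[\beta^\gamma(-i(x))]=0$ if and only if $\beta^\gamma(-i(x_1,x_2(\omega);\omega))=0$ a.s., which by \eqref{eq:explicit-formula-MY-term} is equivalent to $-i(x_1,x_2(\omega);\omega)\in K_H$ a.s. Because $i$ takes values in $R$ and $K_H\cap R=K$, this is in turn equivalent to $-i(x_1,x_2(\omega);\omega)\in K$ a.s., i.e.\ $i(x_1,x_2(\omega);\omega)\le_K 0$ a.s. The main subtlety in the whole argument is ensuring that the auxiliary cone $K_H$ on $H$ interacts correctly with $K$ on $R$ both in the convexity step (for \Cref{lemma:concatenation-convex-functions}) and in the equivalence at the end; the compatibility assumption $K_H\cap R=K$ together with $K\subset K_H$ is exactly what is needed in both places.
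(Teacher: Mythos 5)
Your proof is correct and follows essentially the same route as the paper's: establish that $h(x_1,x_2;\omega)=\beta^\gamma(-i(x_1,x_2;\omega))$ is a normal convex integrand via \Cref{lemma:concatenation-convex-functions} and the Carath\'eodory property, bound it using \Cref{subasu:general-v} and the embedding $R\hookrightarrow H$ to get integrability, invoke \Cref{remark:weak-star-lsc-2}, and read off the equivalence from $K_H\cap R=K$. The only (cosmetic) difference is that you bound $\beta^\gamma(k)\le\tfrac{\gamma}{2}\lVert k\rVert_H^2$ directly from the infimum definition, whereas the paper uses nonexpansiveness of $\pi_{K_H}$; both give the needed uniform a.s.~bound.
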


\begin{proof}
First, we prove that $h(x_1,x_2;\omega):=\beta^\gamma(-i(x_1,x_2;\omega))$ is a normal convex integrand.
Since the indicator function $\delta_{K_H}$ is a proper, convex, and lower semicontinuous function on $H$, $\beta^\gamma$ is a convex, real-valued, and continuous function (cf.~\cite[Proposition 12.15]{Bauschke2011}). In particular, $\beta^\gamma$ is a normal integrand, so $\omega \mapsto \beta^\gamma(f(\omega))$ is measurable for any measurable mapping $f\colon \Omega\rightarrow \R$. Continuity of both $\beta^\gamma$ and $i$ imply continuity of $h$ on $X_1 \times X_2$; convexity of $h$ on $X_1 \times X_2$ follows by \Cref{lemma:concatenation-convex-functions}. As a result, $h$ is a normal convex integrand.

Let $r>0$ and suppose $(x_1, x_2) \in X_1 \times X_2$ satisfies $\lVert x_1 \rVert_{X_1} + \lVert x_2 \rVert_{X_2} \leq r$. Since $0 \in K = K_H \cap R$ and $\pi_{K_H}$ is nonexpansive, we have thanks to the characterization \eqref{eq:explicit-formula-MY-term}
\begin{align*}
h(x_1,x_2;\omega) &= \frac{\gamma}{2} \lVert -i(x_1,x_2;\omega)-\pi_{K_H}(-i(x_1,x_2;\omega))\rVert_H^2\\
&\leq \gamma (\lVert-i(x_1,x_2;\omega)- 0\rVert_H^2 + \lVert \pi_{K_H}(0) - \pi_{K_H}(-i(x_1,x_2;\omega)) \rVert_H^2)\\
&\leq 2 \gamma \lVert i(x_1,x_2;\omega)\rVert_H^2 \leq 2 \gamma c b_{r,i}^2, 
\end{align*}
where $b_{r,i}$ is defined as in \Cref{subasu:general-v} and $c>0$ depends on the embedding constant from $R \hookrightarrow H$. Hence $h(x_1,x_2(\omega);\omega)$ is integrable in $\omega$ for all $x$. Weak* lower semicontinuity of $I_h$ follows by \Cref{remark:weak-star-lsc-2}. The second statement is clear when considering \eqref{eq:equivalence-inequality-cones} in combination with $\beta^\gamma \geq 0$ and $\beta^\gamma(k) = 0$ if and only if $k\in K_H$. 
\end{proof}

In the introduction, we already gave one example \eqref{eq:model-PDE-UQ-state-constraints} of an application to PDE-constrained optimization under uncertainty.
Now, we will consider three representative examples of the almost sure constraint \eqref{eq:conical-constraint} from the same problem class. Additionally, we provide their regularization functions. 
%Given $D \subset \R^d$, recall the standard definitions of Lebesgue $L^2(D)$ and Sobolev $H^1(D)$ spaces (cf.~\cite{Adams2003}).
\begin{example}[Mixed control/state constraints]
\label{example:state-constraints}
Suppose $D$ is a bounded subset of $\R$. Assume $X_2 = H^1(D)$, $X_1 = H=L^2(D)$, and $K _H= \{z \in H: z(s) \geq 0 \text{ a.e.~in } D \}$. For $a \in L^\infty(\Omega,X_2)$, the pointwise almost sure constraint
\begin{equation*}
x_2(s,\omega) \leq a(s,\omega)+\varepsilon x_1(s), \quad \text{a.e. } s\in D, \text{ a.s. } \omega \in \Omega
\end{equation*}
has the form \eqref{eq:conical-constraint} with $i(x_1(\cdot),x_2(\cdot,\omega);\omega) = x_2(\cdot,\omega) - a(\cdot,\omega) - \varepsilon x_1(\cdot)$.  In the case $\varepsilon = 0$, $R = H^1(D)$ and we have a stochastic version of state constraints originally investigated by \cite{Casas1986}. In the case $\varepsilon >0$, $R=L^2(D)$, and we have a stochastic version of a bottleneck constraint as investigated by \cite{Bergounioux1998}.

By \eqref{eq:explicit-formula-MY-term},
\begin{equation*}
\begin{aligned}
\beta^\gamma(-i(x_1(\cdot),x_2(\cdot,\omega);\omega)) & = \frac{\gamma}{2} \lVert \max(0,x_2(\cdot,\omega)-a(\cdot,\omega) - \varepsilon x_1(\cdot))\rVert_{L^2(D)}^2,
\end{aligned}
\end{equation*}
 where $\max$ is defined pointwise in $D$.
\end{example}

\begin{example}[Volume constraint on state]
Assume $X_2=H^1(D)$, $H=R=\R$, and $K_H = \R_+$. For $b \in L^\infty(\Omega)$, the constraint 
\begin{equation*}
 \int_D x_2(s,\omega) \D s \leq b(\omega), \quad \text{a.s. } \omega \in \Omega
\end{equation*}
takes the form $i(x_2(\cdot,\omega);\omega) = \int_D x_2(s,\omega) \D s - b(\omega)$ and the regularization function is given by
\begin{equation*}
\beta^\gamma(-i(x_2(\cdot,\omega);\omega)) = \frac{\gamma}{2}  \left\vert \max \left(0,\int_D x_2(s,\omega) \D s - b(\omega) \right) \right\vert^2.
\end{equation*}
\end{example}

\begin{example}[State gradient constraint]
Let $D$ be a bounded subset of $\R^d$. Assume $X_2 = W^{2,p}(D) \cap W_0^{1,p}(D)$ for  $p >2d$, $H=L^2(D)$, and $K_H$ is the same cone as in \Cref{example:state-constraints}. For $\psi \in L^\infty(\Omega,H)$, the almost sure constraint
\begin{equation*}
|\nabla x_2(s,\omega) |_2 \leq \psi(s,\omega) \quad \text{a.e. } s\in D, \text{ a.s. } \omega \in \Omega
\end{equation*}
takes the form $i(x_2(\cdot,\omega);\omega) = |\nabla x_2(\cdot,\omega) |_2 - \psi(\cdot,\omega)$. In this case, $R = W^{1,p}(D)$. The regularization is
\begin{equation*}
\beta^\gamma(-i(x_2(\cdot,\omega);\omega)) = \frac{\gamma}{2} \lVert \max(0,|\nabla x_2(\cdot,\omega)|_2 -\psi(\cdot,\omega)) \rVert_{L^2(D)}^2.
\end{equation*}
\end{example}

\subsection{Convergence of Regularized Problems}
We now discuss the Moreau--Yosida regularization of the conical constraint from Problem~\eqref{eq:model-problem-abstract-long}. To that end, we define the family of regularized problems, parametrized by $\gamma >0$, by
\begin{equation}
\label{eq:model-PDE-UQ-state-constraints-reformulation}
\tag{$\textup{P}^\gamma$}
 \begin{aligned}
  &\underset{x \in X}{\text{minimize}} \quad \left\lbrace j^\gamma(x):= j(x)+ \E\left[\beta^\gamma( -i(x) )\right]  \right\rbrace \\
  & \text{s.t.} \quad \left\{\begin{aligned}
x_1 &\in C,\\  
  e(x_1,x_2(\omega);\omega) &=0\quad \text{a.s.}
    \end{aligned}\right.
 \end{aligned}
\end{equation} 

Problem \eqref{eq:model-PDE-UQ-state-constraints-reformulation} is interesting in its own right. Due to the linearity of the constraint, in the case where $j$ is quadratic, it is related to a subproblem used in sequential quadratic programming. 

Thanks to \Cref{assumption:general-problem}, the functions $e(\cdot,\cdot;\omega)$ are Fr\'echet differentiable with partial derivatives that are bounded and linear, i.e.,~$e_{x_i}(\omega) \in \mathcal{L}(X_i,W).$ Without loss of generality, affine terms can be incorporated in the objective function, so by linearity, 
\begin{equation*}
e(x_1,x_2;\omega) = e_{x_1}(\omega) x_1 + e_{x_2}(\omega) x_2.
\end{equation*}

We focus on the case where Problem~\eqref{eq:model-problem-abstract-long} does not satisfy the relatively complete recourse condition. This occurs naturally when the equality constraint has a unique solution for every $x_1 \in C$. Then it is clear that the relatively complete recourse condition will not be satisfied except for trivial choices of $K$. 

For the next results, we need the following additional assumptions.

\begin{assumption} 
\label{assumption:example-problem-MY}  \setcounter{subassumption}{0}
\subasu \label{assumption:example-problem-MY-i} $e_{x_2}(\omega)$ is a linear isomorphism for almost every $\omega$ and $e_{x_2}^{-1} \in L^\infty(\Omega,\mathcal{L}(W,X_2))$.\\
\subasu \label{assumption:example-problem-MY-ii} $i$ is continuously Fr\'echet differentiable on $X_1 \times X_2$ for almost every $\omega$ with partial derivatives satisfying $i_{x_k}(x_1, x_2;\cdot) \in L^\infty(\Omega,\mathcal{L}(X_k,R))$ for every $(x_1,x_2) \in X_1\times X_2$ and $k=1,2$.\\
% Additionally, the mapping $x \mapsto i_{x_k}(x_1, x_2(\cdot);\cdot)$ is continuous from $X$ to $L^\infty(\Omega,\mathcal{L}(X_k,R))$.\\
%Moreover, for every $r>0$ there exists $b'_{r,i}>0$ such that $\lVert i_{x_k}(x_1,x_2;\omega)\rVert_{\mathcal{L}(X_k,R)} \leq b'_{r,i}$ for $k=1,2$. \\
%\subasu \label{assumption:example-problem-MY-iia} $x \mapsto i(x_1, x_2(\cdot); \cdot)$ is weakly*-to-weakly* continuous from $X$ to $L^\infty(\Omega,R)$. \\
\subasu \label{assumption:example-problem-MY-iii} If $x^\gamma \rightharpoonup^* \bar{x}$ and $j(x^\gamma) \rightarrow j(\bx)$, then $\lVert x_1^\gamma \rVert_{X_1} \rightarrow \lVert \bx_1 \rVert_{X_1}$.
\end{assumption}

\begin{rem}
For the consistency of the primal problems, shown in \Cref{lemma:MY-basic-convergence}, we only need \Cref{assumption:example-problem-MY-i}. By \Cref{assumption:example-problem-MY-i}, for every $x_1 \in C$, there exists a unique solution $x_2 \in L^\infty(\Omega,X_2)$ such that 
\begin{equation}
\label{eq:solution-PDE-abstract}
x_2(\omega) = -e_{x_2}^{-1}(\omega) e_{x_1}(\omega) x_1.
\end{equation}
Thanks to the additional structure afforded by \Cref{assumption:example-problem-MY-i}, it is in fact enough to require that $j$ is radially unbounded with respect to $x_1$ only (instead of $x$). 
\Cref{assumption:example-problem-MY-iii} is a property that is satisfied in combination with \eqref{eq:solution-PDE-abstract}, e.g., if $J_1$ is a $\mu$-strongly convex function, as is the case for the Tichonov regularization $J_1(x_1) = \frac{\mu}{2} \lVert x_1 \rVert_{X_1}^2, \mu >0$. To see this, notice that by \eqref{eq:solution-PDE-abstract}, the control-to-state operator $S(\omega):X_1 \rightarrow X_2$, $x_1 \mapsto x_2(\omega)$ is a linear mapping for almost every $\omega$. The reduced functional $\hat{J}_2(x_1):=\cR[J_2(x_1, S(\cdot)x_1; \cdot)]$ is therefore convex. It is straightforward to prove that $\hat{j}(x_1^\gamma):=J_1(x_1^\gamma) + \hat{J}_2(x_1^\gamma) = j(x^\gamma)$ is a $\mu$-strongly convex function over $X_1$. Note that if $x^\gamma \rightharpoonup^* \bar{x}$, then $x_1^\gamma \rightharpoonup^* \bar{x}_1$ and thus the convex combination $z_t^\gamma:=t x_1^\gamma + (1-t) \bar{x}_1 \in C$ satisfies $z_t^\gamma \rightharpoonup^* \bar{x}_1$ for any $t \in [0,1]$. By weak* lower semicontinuity and convexity of $\hat{j}$ as well as $j(x^\gamma) \rightarrow j(\bar{x})$,%(which by definition is equivalent to $\hat{j}(x_1^\gamma) \rightarrow \hat{j}(\bar{x}_1)$), one gets
\begin{equation*}
\hat{j}(\bar{x}_1) \leq \liminf_{\gamma \rightarrow \infty} \hat{j}(z_t^\gamma) \leq \limsup_{\gamma \rightarrow \infty} \hat{j}(z_t^\gamma) \leq  \limsup_{\gamma \rightarrow \infty} t \hat{j}(x_1^\gamma) + (1-t) \hat{j}(\bar{x}_1) = \hat{j}(\bar{x}_1),
\end{equation*}
meaning $\lim_{\gamma \rightarrow \infty}  \hat{j}(t x_1^\gamma + (1-t) \bar{x}_1) = \hat{j}(\bar{x}_1)$ for any $t \in [0,1]$. Now, $\mu$-strong convexity of $\hat{j}$ gives %Bauschke2011, Def. 10.5
\begin{equation*}
\begin{aligned}
&\lim_{\gamma \rightarrow \infty} \frac{\mu}{2} t(1-t) \lVert x_1^\gamma - \bar{x}_1 \rVert_{X_1}^2 + \lim_{\gamma \rightarrow \infty} \hat{j}(t x_1^\gamma + (1-t) \bar{x}_1) \leq \lim_{\gamma \rightarrow \infty} t \hat{j}(x_1^\gamma) + (1-t)\hat{j}(\bar{x}_1)\\
&\Leftrightarrow \quad \lim_{\gamma \rightarrow \infty} \frac{\mu}{2} t(1-t) \lVert x_1^\gamma - \bar{x}_1 \rVert_{X_1}^2 \leq 0,
\end{aligned}
\end{equation*}
which gives $\lVert x_1^\gamma \rVert_{X_1} \rightarrow \lVert \bx_1 \rVert_{X_1}$ as required.
\end{rem}

The first result concerns optimality of limit points of solutions to Problem~\eqref{eq:model-PDE-UQ-state-constraints-reformulation}. 
\begin{proposition}
\label{lemma:MY-basic-convergence}
Suppose Problem~\eqref{eq:model-problem-abstract-long} satisfies \Cref{assumption:general-problem}, \Cref{assumption:existence-saddle-points}, and \Cref{assumption:example-problem-MY-i}. Then there exists a solution $x^\gamma \in X$ to Problem~\eqref{eq:model-PDE-UQ-state-constraints-reformulation} for all $\gamma>0$. Additionally, given $\{ \gamma_n\}$ with $\gamma_n \rightarrow \infty$, weak* limit points of $\{ x^{\gamma_n}\}$ solve Problem~\eqref{eq:model-problem-abstract-long}.
\end{proposition}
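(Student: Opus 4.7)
The proposition has two parts: existence of $x^\gamma$ for each $\gamma>0$, and optimality of weak* accumulation points of $\{x^{\gamma_n}\}$. I split the argument accordingly into (a) a direct-method existence argument, (b) a uniform bound on the penalty term yielding vanishing constraint violation, and (c) identification of the limit as a feasible optimal point.

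For (a), I would apply the direct method of the calculus of variations. The objective $j^\gamma = j + \E[\beta^\gamma(-i(\cdot))]$ is weakly* lower semicontinuous on $X$: the functional $j = J_1 + \cR \circ J_2$ is weakly* lsc by \Cref{prop:weak-weak*-lsc} combined with \Cref{subasu:general-ii} and \Cref{subasu:general-vi}, and the penalty term is weakly* lsc by \Cref{lemma:properties-MY-regularization}. The feasible set of \eqref{eq:model-PDE-UQ-state-constraints-reformulation} is nonempty (any feasible point of \eqref{eq:model-problem-abstract-long} works, and such a point exists by \Cref{subasu:existence-saddle-points-ii}); it is weakly* closed because $C$ is weakly closed in the reflexive space $X_1$ and because the equality-constraint set is weakly* closed by \Cref{corollary:affine-maps-are-weak-star-closed} applied to the operator-valued random variables $e_{x_1}$, $e_{x_2}$, which are essentially bounded thanks to \Cref{subasu:general-iv}--\Cref{subasu:general-v}. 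Coercivity on the feasible set follows from \Cref{subasu:existence-saddle-points-i}: either $j$ is radially unbounded and so is $j^\gamma \geq j$, or $F_{\textup{ad},u}$ is bounded near zero, in which case \Cref{assumption:example-problem-MY-i} combined with \eqref{eq:solution-PDE-abstract} represents $x_2$ as a bounded linear image of $x_1$, so that a minimizing sequence is controlled through $x_1$ alone. Banach--Alaoglu (using that $L^\infty(\Omega,X_2)$ is the dual of the separable space $L^1(\Omega,X_2^*)$) then delivers a weak* convergent subsequence whose limit minimizes $j^\gamma$.

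For (b), fix any $\bar{x}$ feasible for \eqref{eq:model-problem-abstract-long}. By the second statement of \Cref{lemma:properties-MY-regularization}, $\E[\beta^{\gamma_n}(-i(\bar{x}))]=0$, so $j^{\gamma_n}(\bar{x}) = j(\bar{x})$, and optimality of $x^{\gamma_n}$ in \eqref{eq:model-PDE-UQ-state-constraints-reformulation} yields
\begin{equation*}
j(x^{\gamma_n}) + \E\bigl[\beta^{\gamma_n}(-i(x^{\gamma_n}))\bigr] \leq j(\bar{x}).
\end{equation*}
Let $\tilde{x}$ be a weak* accumulation point, and pass (without relabeling) to a subsequence with $x^{\gamma_n} \rightharpoonup^* \tilde{x}$. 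Weak* lower semicontinuity of $j$ gives $\limsup_n (j(\bar{x}) - j(x^{\gamma_n})) \leq j(\bar{x}) - j(\tilde{x}) < \infty$, so inserting the explicit formula \eqref{eq:explicit-formula-MY-term},
\begin{equation*}
\tfrac{\gamma_n}{2}\, \E\bigl[\lVert -i(x^{\gamma_n}) - \pi_{K_H}(-i(x^{\gamma_n})) \rVert_H^2\bigr] \leq j(\bar{x}) - j(x^{\gamma_n}),
\end{equation*}
from which $\E[\lVert -i(x^{\gamma_n}) - \pi_{K_H}(-i(x^{\gamma_n}))\rVert_H^2] \to 0$ as $\gamma_n \to \infty$.

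For (c), I would introduce $\Phi(x) := \E[\lVert -i(x) - \pi_{K_H}(-i(x)) \rVert_H^2]$, which up to a $\gamma$-factor is the functional studied in \Cref{lemma:properties-MY-regularization} with $\beta^1$ replaced by $k \mapsto \mathrm{dist}_H(k,K_H)^2$ (a convex, continuous, nonnegative function vanishing exactly on $K_H$). Repeating verbatim the proof of \Cref{lemma:properties-MY-regularization} (using \Cref{lemma:concatenation-convex-functions}, the growth bound in \Cref{subasu:general-v}, and \Cref{remark:weak-star-lsc-2}) shows that $\Phi$ is weakly* lower semicontinuous, so $\Phi(\tilde{x}) \leq \liminf_n \Phi(x^{\gamma_n}) = 0$. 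The characterization of the Moreau--Yosida envelope then gives $i(\tilde{x}_1,\tilde{x}_2(\omega);\omega) \leq_K 0$ a.s.; the remaining first-stage and equality constraints pass to $\tilde{x}$ by the same weak*-closedness arguments as in (a). For optimality, note that every feasible $y$ for \eqref{eq:model-problem-abstract-long} satisfies $j^{\gamma_n}(y) = j(y)$, hence
\begin{equation*}
j(\tilde{x}) \leq \liminf_n j(x^{\gamma_n}) \leq \liminf_n j^{\gamma_n}(x^{\gamma_n}) \leq j(y).
\end{equation*}
The main obstacle is step (c): the vanishing of $\E[\beta^{\gamma_n}(-i(x^{\gamma_n}))]$ does not transfer naively to the limit because $\gamma_n\to\infty$, and the route through the $\gamma$-free functional $\Phi$ requires essentially a second application of the machinery of \Cref{lemma:properties-MY-regularization}. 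A secondary, more delicate point is coercivity of $j^\gamma$ on the enlarged feasible set of \eqref{eq:model-PDE-UQ-state-constraints-reformulation} in (a), which is where the solvability assumption \Cref{assumption:example-problem-MY-i} enters crucially.
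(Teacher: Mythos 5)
Your proposal is correct and follows essentially the same route as the paper's proof: direct method for existence, the comparison $j(x^{\gamma_n}) \le j^{\gamma_n}(x^{\gamma_n}) \le j(\bar{x})$ to force the penalty term to vanish, and weak* lower semicontinuity of the distance functional to recover feasibility and optimality of the limit point. The only simplification you miss is that your $\Phi$ equals $2\,\E[\beta^{1}(-i(\cdot))]$, so its weak* lower semicontinuity is already granted by \Cref{lemma:properties-MY-regularization} applied with a fixed $\gamma$ and no second pass through that machinery is required.
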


\begin{proof}
Let $F_{\text{ad}}' := \{ x \in X: x_1 \in C, e(x_1, x_2(\omega);\omega) = 0 \text{ a.s.}\}$ denote the feasible set for Problem \eqref{eq:model-PDE-UQ-state-constraints-reformulation}. From \eqref{eq:solution-PDE-abstract}, we have
\begin{equation}
\label{eq:boundedness-x2}
\lVert x_2 \rVert_{L^\infty(\Omega,X_2)} \leq \lVert e_{x_2}^{-1} \rVert_{L^\infty(\Omega,\mathcal{L}(W,X_2))} \lVert e_{x_1} x_1 \rVert_{L^\infty(\Omega,W)} \leq c \lVert x_1 \rVert_{X_1}.
\end{equation}
Either boundedness of $C$ or radial unboundedness of $j$ imply boundedness of $j^\gamma$ over $F'_{\text{ad}}$.
%In the case where $C$ is bounded, $F'_{\text{ad}}$ is therefore bounded. In the case where $j$ is radially unbounded, we also obtain that $j$, and trivially $j^\gamma$, are bounded below over $F_{\text{ad}}'$.  
Thus a minimizing sequence $\{x^n\} \subset F_{\text{ad}}'$ exists such that 
\begin{equation}
\label{eq:minimizing-sequence}
\lim_{n \rightarrow \infty} j^\gamma(x^n) = \bar{j}^\gamma:=\inf_{x \in F_{\text{ad}}'} j^\gamma(x).
\end{equation}
If $C$ is bounded, then \eqref{eq:boundedness-x2} implies $x^n$ is bounded in $X$ for all $n$. Otherwise, if $j$ is radially unbounded, then for every $c>0$ there exists $r >0$ such that for all $n$ satisfying $\lVert x^n \rVert_{X} >r$, we have $j^\gamma(x^n) \geq j(x^n)>c$. Choosing $c$ large enough produces a contradiction to \eqref{eq:minimizing-sequence}. Hence $x^n$ is bounded for all $n$, so there exists a subsequence $\{n_k\}$ such that $x^{n_k} \rightharpoonup^* x^\gamma$.  \Cref{assumption:general-problem} guarantees that \Cref{prop:weak-weak*-lsc} and \Cref{lemma:properties-MY-regularization} apply, from which we deduce weak* lower semicontinuity of $j^\gamma$. As a result, we obtain
\begin{equation*}
\label{eq:weak-star-lsc-minimizing-sequence}
\bar{j}^\gamma \leq j^\gamma(x^\gamma) \leq \lim_{k \rightarrow \infty} j^\gamma(x^{n_k}) = \bar{j}^\gamma. 
\end{equation*}
Notice that $C$ is closed and by \Cref{corollary:affine-maps-are-weak-star-closed}, $F'_{\textup{ad}}$ is weakly* closed. Hence $x^\gamma \in F'_{\textup{ad}}$ and therefore $x^\gamma$ is an optimum for Problem \eqref{eq:model-PDE-UQ-state-constraints-reformulation}.

Now, observe a sequence $\{ x^{\gamma_n}\}$ of optima to Problem \eqref{eq:model-PDE-UQ-state-constraints-reformulation} with $\gamma_n \rightarrow \infty$.  If $\bar{x}$ is an optimum of Problem~\eqref{eq:model-problem-abstract-long}, then by optimality,
\begin{equation}
\label{eq:MY-points-uniformly-bounded}
j(x^{\gamma_{n}}) \leq j^{\gamma_{n}}(x^{\gamma_{n}}) \leq j^{\gamma_{n}}(\bx) = j(\bx).
\end{equation}
In fact \eqref{eq:MY-points-uniformly-bounded} implies that $\{x^{\gamma_n}\}$ makes up a minimizing sequence for $j$. Again, boundedness of $C$ or radial unboundedness of $j$ imply boundedness of $\{x^{\gamma_n} \}$.
Therefore, there exists a subsequence $\{n_k\}$ such that $x^{\gamma_{n_k}} \rightharpoonup^* \hat{x}$.
Weak* lower semicontinuity of $j$ gives $j(\hat{x}) \leq j(\bar{x}).$ Now, it is enough to show feasibility of $\hat{x}$, since from feasibility one must conclude $j(\hat{x}) = j(\bar{x})$ by optimality of $\bar{x}$. 
Notice that $j$ is bounded over $F'_{\text{ad}}$, so from \eqref{eq:MY-points-uniformly-bounded}, we have 
\begin{equation*}
\E[\beta^{\gamma_{n_k}}(-i(x^{\gamma_{n_k}}))] = \frac{\gamma_{n_k}}{2} \E[\lVert -i(x^{\gamma_{n_k}})- \pi_{K_H}( -i(x^{\gamma_{n_k}}))\rVert_H^2] \leq c.
\end{equation*}
Using the weak* lower semicontinuity granted by \Cref{lemma:properties-MY-regularization}, we obtain the convergence $\E[\lVert -i(x^{\gamma_{n_k}})- \pi_{K_H}( -i(x^{\gamma_{n_k}}))\rVert_H^2] \rightarrow 0$ as $k \rightarrow \infty$, meaning $-i(\hat{x}_1,\hat{x}_2(\omega);\omega) \in K$ a.s.~by \Cref{lemma:properties-MY-regularization}.
Feasibility of $\hat{x}$ for Problem~\eqref{eq:model-problem-abstract-long} now follows by closedness of $C$ and \Cref{corollary:affine-maps-are-weak-star-closed}. In closing, we have $j(x^{\gamma_n}) \rightarrow j(\hat{x})= j(\bar{x})$ since
\begin{equation}
\label{eq:optimality-of-limit-point}
j(\hat{x}) \leq \liminf_{\gamma_n \rightarrow \infty} j(x^{\gamma_n}) \leq  \limsup_{\gamma_n \rightarrow \infty} j(x^{\gamma_n}) \leq j(\bar{x}) = j(\hat{x}).
\end{equation}
\end{proof}

In the following, we analyze the convergence of Lagrange multipliers as $\gamma \rightarrow \infty$. To that end, it will be helpful to first formulate more explicit optimality conditions for Problem~\eqref{eq:model-problem-abstract-long}, and after that the explicit conditions for Problem~\eqref{eq:model-PDE-UQ-state-constraints-reformulation}. In what follows, we use the abusive notation $e_{x_i}^*$ for the adjoint operator from $(L^\infty(\Omega,W))^*$ to $(L^\infty(\Omega,X_i))^*$ and $e^*_{x_i}(\cdot)$ for the adjoint operator from $L^1(\Omega,W^*)$ to $L^1(\Omega,X_i^*)$ as described in \Cref{lemma:regularity-operators}. Similarly, $i_{x_i}^*(\bx)$ is the shorthand for the operator from  $(L^\infty(\Omega,R))^*$ to $(L^\infty(\Omega,X_i))^*$ whereas $i_{x_i}^*(\bx_1, \bx_2(\cdot); \cdot)$ represents the operator from $L^1(\Omega,R^*)$ to $L^1(\Omega,X_i^*)$.
\begin{lemma}
\label{lemma:KKT-general}
Suppose \Cref{assumption:general-problem}, \Cref{assumption:existence-saddle-points}, and \Cref{assumption:example-problem-MY-i}--\ref{assumption:example-problem-MY-ii} are satisfied. Then there exists a saddle point $(\bar{x}, \bar{\lambda}, \bar{\lambda}^\circ)$ of the Lagrangian $\bar{L}$. Furthermore, the existence of the saddle point is equivalent to the following necessary and sufficient optimality conditions:
there exist $\bar{\rho} \in L^1(\Omega,X_1^*)$, $\bar{\vartheta} \in \partial \cR[J_2(\bx)],$
$\bar{\eta} \in \partial J_1(\bx_1)$, $\bar{\xi} \in N_{C}(\bx_1)$, and $\bar{\zeta} = (\bar{\zeta}_1,\bar{\zeta}_2) \in L^p(\Omega,X_1^*) \times L^p(\Omega,X_2^*)$ with $\bar{\zeta}(\omega) \in \partial J_{2}(\bx_1,\bx_2(\omega); \omega)$  such that
%\begin{equation}
%\label{eq:FOC-MY-problem}
\begin{subequations}
\begin{align}
 \bar{\eta} +\E[\bar{\rho}] + e_{x_1}^{*} \bar{\lambda}_e^\circ + i_{x_1}^*(\bx) \bar{\lambda}_i^\circ +\bar{\xi} &= 0,\label{eq:FOC-MY-problem-1}\\
e_{x_2}^* \bar{\lambda}_e^\circ + i_{x_2}^*(\bx) \bar{\lambda}_i^\circ &=0,\label{eq:FOC-MY-problem-2}\\
\bar{\zeta}_1(\omega)\bar{\vartheta}(\omega) +e_{x_1}^*(\omega) \bar{\lambda}_e(\omega)+ i_{x_1}^*(\bx_1,\bx_2(\omega);\omega) \bar{\lambda}_i(\omega) -\bar{\rho}(\omega) &=0,\label{eq:FOC-MY-problem-3}\\
\bar{\zeta}_2(\omega)\bar{\vartheta}(\omega) + e_{x_2}^*(\omega) \bar{\lambda}_e(\omega) +i_{x_2}^*(\bx_1,\bx_2(\omega);\omega) \bar{\lambda}_i(\omega)&= 0,\label{eq:FOC-MY-problem-4}\\
\bx_1 \in C, \quad e(\bx_1, \bx_2(\omega);\omega) &=0,\label{eq:FOC-MY-problem-5}\\
 i(\bx_1,\bx_2(\omega);\omega)\leq_K 0, \quad \bar{\lambda}_i(\omega) \in K^{\oplus}, \quad  \langle \bar{\lambda}_i(\omega), i(\bx_1,\bx_2(\omega);\omega)\rangle_{R^*,R}&=0,\label{eq:FOC-MY-problem-6} \\
 \bar{\lambda}_i^\circ \in \mathcal{K}^{\oplus}, \quad \langle \bar{\lambda}_i^\circ, i(\bx)\rangle_{(L^\infty(\Omega,R))^*,L^\infty(\Omega,R)} &= 0, \label{eq:FOC-MY-problem-7}
\end{align}
\end{subequations}
%\end{equation}
with pointwise conditions holding for almost all $\omega \in \Omega$. 
\end{lemma}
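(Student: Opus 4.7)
The existence of a saddle point $(\bar{x}, \bar{\lambda}, \bar{\lambda}^\circ)$ is immediate from \Cref{lemma:saddle-points-lagrangian}, since \Cref{assumption:general-problem} and \Cref{assumption:existence-saddle-points} are in force. The task then reduces to translating the abstract conditions (i)--(iv) of \Cref{thm:extended-KKT-conditions} into the explicit pointwise system \eqref{eq:FOC-MY-problem-1}--\eqref{eq:FOC-MY-problem-7}, using the Fréchet differentiability supplied by \Cref{assumption:example-problem-MY-ii} together with \Cref{lemma:regularity-operators} on the regularity of adjoint operators. Conditions (iii) and (iv) of \Cref{thm:extended-KKT-conditions} already coincide with \eqref{eq:FOC-MY-problem-5}--\eqref{eq:FOC-MY-problem-7}, so attention focuses on translating conditions (i) and (ii).

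For the absolutely continuous multipliers, condition (ii) of \Cref{thm:extended-KKT-conditions} places $(\bar{x}_1, \bar{x}_2(\omega))$ at a pointwise minimum of a convex Carathéodory integrand. Since $e(\cdot,\cdot;\omega)$ is linear and $i(\cdot,\cdot;\omega)$ is Fréchet differentiable with $\bar{\lambda}_i(\omega) \in K^\oplus$ (so that the $\bar{\lambda}_i$-term is convex), applying the Moreau--Rockafellar sum rule and splitting the stationarity condition along the $x_1$ and $x_2$ coordinates delivers some $\bar{\zeta}(\omega) \in \partial J_2(\bar{x}_1, \bar{x}_2(\omega);\omega)$ satisfying \eqref{eq:FOC-MY-problem-3}--\eqref{eq:FOC-MY-problem-4} almost surely; the required measurability and integrability $\bar{\zeta} \in L^p(\Omega, X_1^*) \times L^p(\Omega, X_2^*)$ is exactly what \Cref{rem:weak-compactness-subdifferential-J2} provides.

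For the singular multipliers, \Cref{rem:interpretation-ell-condition} combined with condition (iv) of \Cref{thm:extended-KKT-conditions} yields $L^\circ(\bar{x}, \bar{\lambda}^\circ) = \ell(\bar{x}_1, \bar{\lambda}^\circ) = 0$, so $\bar{x}_2$ achieves the defining infimum of $\ell(\bar{x}_1, \bar{\lambda}^\circ)$. Exploiting linearity of $e$, $K$-convexity of $i$, and $\bar{\lambda}_i^\circ \in \cK^\oplus$, the map $x_2 \mapsto L^\circ(\bar{x}_1, x_2, \bar{\lambda}^\circ)$ is convex and Fréchet differentiable on $L^\infty(\Omega, X_2)$, with derivative $e_{x_2}^* \bar{\lambda}_e^\circ + i_{x_2}^*(\bar{x})\bar{\lambda}_i^\circ$ well-defined in $(L^\infty(\Omega, X_2))^*$ through \Cref{lemma:regularity-operators}(i); first-order optimality produces \eqref{eq:FOC-MY-problem-2}. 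For \eqref{eq:FOC-MY-problem-1}, the sandwich $\ell(x_1, \bar{\lambda}^\circ) \leq L^\circ(x_1, \bar{x}_2, \bar{\lambda}^\circ)$ with equality at $\bar{x}_1$ upgrades condition (i) of \Cref{thm:extended-KKT-conditions} to the assertion that $\bar{x}_1$ also minimizes the convex and Fréchet differentiable function $x_1 \mapsto J_1(x_1) + \langle \E[\bar{\rho}], x_1\rangle + L^\circ(x_1, \bar{x}_2, \bar{\lambda}^\circ)$ over $C$; the Moreau--Rockafellar sum rule (with $J_1$ continuous on $X_1$) then yields $\bar{\eta} \in \partial J_1(\bar{x}_1)$ and $\bar{\xi} \in N_C(\bar{x}_1)$ making \eqref{eq:FOC-MY-problem-1} hold. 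Sufficiency is obtained by reversing each chain-rule step: convexity plus \eqref{eq:FOC-MY-problem-3}--\eqref{eq:FOC-MY-problem-4} yields condition (ii); \eqref{eq:FOC-MY-problem-2} yields $\ell(\bar{x}_1, \bar{\lambda}^\circ) = 0$ via convex minimization in $x_2$, giving condition (iv); and \eqref{eq:FOC-MY-problem-1} together with the subgradient inequality for $\ell$ at $\bar{x}_1$ delivers condition (i).

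The principal technical obstacle is ensuring that the chain rule is valid in the dual setting, i.e., that $e_{x_j}^* \bar{\lambda}_e^\circ$ and $i_{x_j}^*(\bar{x})\bar{\lambda}_i^\circ$ genuinely represent the Fréchet derivatives of $x_j \mapsto \langle \bar{\lambda}_e^\circ, e(\cdot)\rangle$ and $x_j \mapsto \langle \bar{\lambda}_i^\circ, i(\cdot)\rangle$ at $\bar{x}$ as elements of $(L^\infty(\Omega, X_j))^*$. This is precisely what \Cref{lemma:regularity-operators}(i) supplies: the relevant adjoints are bounded and preserve the singular/absolutely continuous splitting, and the uniform-in-$\omega$ bounds on $i_{x_k}$ from \Cref{assumption:example-problem-MY-ii} lift the pointwise Fréchet differentiability to the Bochner level, so that compositions with singular continuous linear functionals on $L^\infty(\Omega, R)$ and $L^\infty(\Omega, W)$ remain Fréchet differentiable.
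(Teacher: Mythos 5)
Your proposal is correct and follows essentially the same route as the paper: invoke the saddle-point existence lemma, then translate conditions (i)--(iv) of \Cref{thm:extended-KKT-conditions} into \eqref{eq:FOC-MY-problem-1}--\eqref{eq:FOC-MY-problem-7} via the Moreau--Rockafellar sum rule, the differentiability from \Cref{assumption:example-problem-MY-ii}, the adjoint regularity of \Cref{lemma:regularity-operators}, and \Cref{rem:weak-compactness-subdifferential-J2} for the $L^p$ regularity of $\bar{\zeta}$. The only (cosmetic) difference is that the paper obtains \eqref{eq:FOC-MY-problem-1}--\eqref{eq:FOC-MY-problem-2} in one step by minimizing a single functional $F$ over the product space $X$, whereas you split the $x_2$- and $x_1$-optimality and justify the reduction from $\ell$ to $L^\circ(\cdot,\bar{x}_2,\bar{\lambda}^\circ)$ by an explicit sandwich argument — which is the same attainment fact the paper draws from \Cref{rem:interpretation-ell-condition}.
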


\begin{proof}
By \Cref{lemma:saddle-points-lagrangian}, a saddle point $(\bx,\bar{\lambda}, \bar{\lambda}^\circ)$ exists, and so we can apply \Cref{thm:extended-KKT-conditions}. The conditions \eqref{eq:FOC-MY-problem-5}--\eqref{eq:FOC-MY-problem-7} are trivially satisfied.
Minimizing $J_1(x_1) + \langle \E[\bar{\rho}], x_1\rangle_{X_1^*,X_1} + \ell(x_1, \bar{\lambda}^\circ)$
in $x_1$ over $C$ is equivalent to minimizing
\begin{equation*}
\begin{aligned}
F(x) &:=J_1(x_1) + \langle \E[\bar{\rho}], x_1\rangle_{X_1^*,X_1} +\langle \bar{\lambda}_e^\circ,e(x) \rangle_{(L^\infty(\Omega,W))^*,L^\infty(\Omega,W)}\\
&\qquad +\langle \bar{\lambda}_i^\circ,i(x)\rangle_{(L^\infty(\Omega,R))^*,L^\infty(\Omega,R)} + \delta_C(x_1)
\end{aligned}
\end{equation*}
over $X$. In combination with \eqref{eq:FOC-MY-problem-5}--\eqref{eq:FOC-MY-problem-6}, the condition $\ell(\bx_1,\bar{\lambda}^\circ) =0$ implies the minimum in the definition of \eqref{eq:definition-singular-functional} can only be attained at $\bx_2$ by Remark~\ref{rem:interpretation-ell-condition}. Hence condition (i) of \Cref{thm:extended-KKT-conditions} is the same as $0 \in \partial F(\bx)$. All terms in $F$ other than $\delta_C$ are continuous on $X$.  Thus the sum rule for the subdifferential applies (cf.~\cite[Theorem 2.168]{Bonnans2013}) and we have after exploiting differentiability afforded by \Cref{assumption:example-problem-MY-ii} the expression
\begin{equation*}
0 \in \begin{pmatrix}
\partial J_1(\bx_1) \\ \{ 0\}
\end{pmatrix}  + \begin{pmatrix}
\{\E[\bar{\rho}] + e_{x_1}^* \bar{\lambda}_e^\circ +  i_{x_1}^*(\bx) \bar{\lambda}_i^\circ\}\\ \{e_{x_2}^* \bar{\lambda}_e^\circ + i_{x_2}^*(\bx) \bar{\lambda}_i^\circ\}
\end{pmatrix} + \begin{pmatrix}
 \partial \delta_C(\bx_1)\\
 \{ 0\}
\end{pmatrix}.
\end{equation*}
Note that the corresponding adjoint operators are well-defined by \Cref{lemma:regularity-operators}. 
The existence of $\bar{\eta}$ and $\bar{\xi}$ as well as \eqref{eq:FOC-MY-problem-1}--\eqref{eq:FOC-MY-problem-2} follow.
Condition (ii) of \Cref{thm:extended-KKT-conditions} yields 
\begin{equation*}
\begin{aligned}
0 &\in \partial (J_{2}(\bx_1,\bx_2(\omega);\omega) \bar{\vartheta}(\omega))  + \begin{pmatrix}
\{e^*_{x_1}(\omega) \bar{\lambda}_e(\omega) + i^*_{x_1}(\bx_1,\bx_2(\omega);\omega) \bar{\lambda}_i(\omega) - \bar{\rho}(\omega)\}\\
\{e^*_{x_2}(\omega) \bar{\lambda}_e(\omega) + i^*_{x_2}(\bx_1,\bx_2(\omega);\omega) \bar{\lambda}_i(\omega) \}
\end{pmatrix}.
\end{aligned}
\end{equation*}
Since $\bar{\vartheta} \geq 0$ a.s., $\partial (J_{2}(\bx_1,\bx_2(\omega);\omega )\bar{\vartheta}(\omega)) =  \partial J_{2}(\bx_1,\bx_2(\omega);\omega) \bar{\vartheta}(\omega)$.  Hence there exists $\bar{\zeta}(\omega) \in \partial J_{2}(\bx_1,\bx_2(\omega);\omega)$ such that \eqref{eq:FOC-MY-problem-3}--\eqref{eq:FOC-MY-problem-4} are satisfied a.s. The fact that $\bar{\zeta} \in L^p(\Omega,X_1^*) \times L^p(\Omega,X_2^*)$ follows from \Cref{rem:weak-compactness-subdifferential-J2}.
\end{proof}

Now, we formulate the optimality conditions for Problem~\eqref{eq:model-PDE-UQ-state-constraints-reformulation}. We define the corresponding Lagrangian by $L^\gamma(x,\lambda):=j^\gamma(x) + \E[\langle \lambda_e, e(x)\rangle_{W^*,W}]$.
\begin{lemma}
\label{lemma:optimality-regular-problem}
Suppose \Cref{assumption:general-problem}, \Cref{assumption:existence-saddle-points}, and \Cref{assumption:example-problem-MY-i}--\ref{assumption:example-problem-MY-ii} are satisfied. Then there exists a saddle point $(x^\gamma, \lambda^\gamma)$ of the Lagrangian $L^\gamma$. Furthermore, the existence of the saddle point is equivalent to the following necessary and sufficient optimality conditions: there exist
% $x^\gamma \in X$ is an optimum of Problem~\eqref{eq:model-PDE-UQ-state-constraints-reformulation} if and only if $x_1^\gamma \in C$ and there exist 
$\rho^\gamma \in L^1(\Omega,X_1^*)$,  $\vartheta^\gamma \in \partial \cR[J_2(x^\gamma)],$
%$\lambda_e^\gamma \in L^1(\Omega,W^*)$, 
$\eta^\gamma \in  \partial J_1(x_1^\gamma)$, $\xi^\gamma \in N_{C}(x_1^\gamma)$ and $\zeta^\gamma = (\zeta_1^\gamma,\zeta_2^\gamma) \in L^p(\Omega,X_1^*)\times L^p(\Omega,X_2^*)$  with $\zeta^\gamma(\omega) \in\partial J_{2}(x_1^\gamma,x_2^\gamma(\omega);\omega)$ such that
\begin{subequations}
\begin{align}
\eta^\gamma +\E[\rho^\gamma] +  \xi^\gamma &=0, \label{eq:FOC-MY-problem-gamma-1}\\
\zeta_1^\gamma(\omega)\vartheta^\gamma(\omega)+ e_{x_1}^*(\omega)  \lambda_e^\gamma(\omega)+i_{x_1}^*(x_1^\gamma,x_2^\gamma(\omega);\omega) \lambda_i^\gamma(\omega)- \rho^\gamma(\omega) &= 0, \label{eq:FOC-MY-problem-gamma-2}  \\
\zeta_2^\gamma(\omega)\vartheta^\gamma(\omega)+ e_{x_2}^*(\omega) \lambda_e^\gamma(\omega) +i_{x_2}^*(x_1^\gamma,x_2^\gamma(\omega);\omega) \lambda_i^\gamma(\omega)  &= 0, \label{eq:FOC-MY-problem-gamma-3}\\
 x_1^\gamma \in C, \quad e(x_1^\gamma,x_2^\gamma(\omega);\omega)&=0, \label{eq:FOC-MY-problem-gamma-4}\\
\lambda_i^\gamma (\omega) =\gamma \big(i(x_1^\gamma,x_2^\gamma(\omega);\omega)+\pi_{K_H}(-i(x_1^\gamma,x_2^\gamma(\omega);\omega)))\quad &\text{in } H, \label{eq:FOC-MY-problem-gamma-5}
\end{align}
\end{subequations}
with pointwise conditions holding for almost all $\omega \in \Omega$.  
\end{lemma}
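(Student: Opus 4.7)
The plan is to mirror the arguments of \Cref{lemma:saddle-points-lagrangian} and \Cref{thm:KKT-basic-conditions}, specialized to the regularized problem where the cone $K$ no longer appears as an explicit constraint. Under \Cref{assumption:example-problem-MY-i}, for every $x_1 \in C$ the identity $x_2(\omega) = -e_{x_2}^{-1}(\omega)e_{x_1}(\omega)x_1$ defines a feasible $x_2 \in L^\infty(\Omega,X_2)$, and in fact the perturbed constraint $e(x_1,x_2(\omega);\omega) = u_e(\omega)$ is solvable for every $u_e \in L^\infty(\Omega,W)$. This yields a Robinson-type qualification analogous to \eqref{eq:constraint-qualification} and, since no inequality is present, a trivial relatively complete recourse condition. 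Weak* lower semicontinuity of $L^\gamma(\cdot,\lambda_e)$ follows from \Cref{prop:weak-weak*-lsc} and \Cref{lemma:properties-MY-regularization}; existence of the primal part $x^\gamma$ was already proven in \Cref{lemma:MY-basic-convergence}, and the saddle-point arguments underlying \Cref{lemma:saddle-points-lagrangian} then produce $\lambda_e^\gamma \in L^1(\Omega,W^*)$.

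For the equivalence with the KKT system I would follow the template of the proof of \Cref{thm:extended-KKT-conditions}, now applied to $j^\gamma$ and in the absence of singular multipliers. Selecting $\vartheta^\gamma \in \partial \cR[J_2(x^\gamma)]$, the primal part of the saddle-point inequality reduces to $\min_{x\in X}\{H_1(x) + H_2(\iota(x))\}$ with $H_1(x)=J_1(x_1)+\delta_C(x_1)$ and $H_2=I_{h_2}$ for
\begin{equation*}
h_2(x_1,x_2;\omega) = J_2(x_1,x_2;\omega)\vartheta^\gamma(\omega) + \beta^\gamma(-i(x_1,x_2;\omega)) + \langle \lambda_e^\gamma(\omega),e(x_1,x_2;\omega)\rangle_{W^*,W} - \cR^*[\vartheta^\gamma].
\end{equation*}
The bound $\beta^\gamma(-i(x_1,x_2;\omega)) \leq 2\gamma c b_{r,i}^2$ obtained in the proof of \Cref{lemma:properties-MY-regularization}, combined with the growth conditions of \Cref{assumption:general-problem}, makes $h_2$ a normal convex integrand and $H_2$ finite and continuous on $\mathfrak{X}$. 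Invoking the Moreau--Rockafellar theorem produces $\rho^\gamma \in L^1(\Omega,X_1^*)$ with $-\iota^*(\E[\rho^\gamma],0) \in \partial H_1(x^\gamma)$, which yields \eqref{eq:FOC-MY-problem-gamma-1} after the sum rule with $\eta^\gamma \in \partial J_1(x_1^\gamma)$, $\xi^\gamma \in N_C(x_1^\gamma)$, together with a pointwise subdifferential inclusion for $h_2$ as in \Cref{cor:characterization-subdifferentials}.

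The one genuinely new ingredient relative to \Cref{lemma:KKT-general} is the treatment of the Moreau--Yosida term inside $\partial h_2$. Because $\beta^\gamma$ is Fr\'echet differentiable with $\nabla\beta^\gamma = \gamma(\mathrm{id}_H - \pi_{K_H})$ and $i$ is differentiable by \Cref{assumption:example-problem-MY-ii}, the chain rule gives
\begin{equation*}
\partial_{x_k}\beta^\gamma(-i(x_1,x_2;\omega)) = i_{x_k}^*(x_1,x_2;\omega)\,\gamma\bigl(i(x_1,x_2;\omega) + \pi_{K_H}(-i(x_1,x_2;\omega))\bigr).
\end{equation*}
Defining $\lambda_i^\gamma(\omega)$ by \eqref{eq:FOC-MY-problem-gamma-5} and using \Cref{rem:weak-compactness-subdifferential-J2} for a selection $\zeta^\gamma \in L^p(\Omega,X_1^*) \times L^p(\Omega,X_2^*)$ of $\partial J_2(x_1^\gamma,x_2^\gamma(\cdot);\cdot)$, the pointwise inclusion splits into \eqref{eq:FOC-MY-problem-gamma-2}--\eqref{eq:FOC-MY-problem-gamma-3}; primal feasibility \eqref{eq:FOC-MY-problem-gamma-4} is inherited from $x^\gamma \in X_0$ together with the nonanticipativity identification for $e$. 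The converse direction mimics the first half of the proof of \Cref{thm:extended-KKT-conditions}: convexity of $x\mapsto \beta^\gamma(-i(x))$ via \Cref{lemma:concatenation-convex-functions}, together with monotonicity of the expectation in $\lambda_e^\gamma$, promotes the KKT conditions back to the saddle-point inequalities. The main obstacle in this argument is the simultaneous bookkeeping of two roles for $\beta^\gamma(-i(\cdot))$: it must be absorbed as a normal convex integrand so that pointwise subdifferentiation via \Cref{cor:characterization-subdifferentials} applies, while its Fr\'echet differentiability is exploited to extract the \emph{explicit} multiplier $\lambda_i^\gamma$ defined in \eqref{eq:FOC-MY-problem-gamma-5}. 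This is precisely the mechanism that collapses the extended Lagrangian of \Cref{thm:extended-KKT-conditions} to the ordinary Lagrangian $L^\gamma$.
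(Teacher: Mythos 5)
Your proposal is correct and follows essentially the same route as the paper: verify that \Cref{assumption:relatively-complete-recourse} (and the constraint qualification) holds automatically for \eqref{eq:model-PDE-UQ-state-constraints-reformulation} because $e_{x_2}$ is an isomorphism, invoke the saddle-point existence and the KKT characterization of \Cref{thm:KKT-basic-conditions}, and then extract $\lambda_i^\gamma$ from the Fr\'echet derivative $\nabla\beta^\gamma = \gamma(\mathrm{id}_H - \pi_{K_H})$ via the chain rule applied to the pointwise stationarity condition. The only difference is that you re-derive the Moreau--Rockafellar step with the regularization term absorbed into the integrand $h_2$, whereas the paper cites \Cref{thm:KKT-basic-conditions} directly; this is the same mechanism in slightly more detail.
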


\begin{proof}
By \Cref{assumption:example-problem-MY-i}, we have that the set $C$ is contained in the induced feasible set for $x_1$ given by
$\tilde{C} = \{ x_1 \in X_1: \exists x_2 \in L^\infty(\Omega,X_2) \text{ s.t. } e(x_1,x_2(\omega);\omega)=0 \text{ a.s.} \}$. Hence \Cref{assumption:relatively-complete-recourse} is satisfied for Problem~\eqref{eq:model-PDE-UQ-state-constraints-reformulation} and
\Cref{lemma:saddle-points-lagrangian} can be applied to the Lagrangian $L^\gamma$, giving the existence of a saddle point $(x^\gamma,\lambda^\gamma)$. Now, \Cref{thm:KKT-basic-conditions} can be applied.

Condition (i) of \Cref{thm:KKT-basic-conditions} clearly translates to \eqref{eq:FOC-MY-problem-gamma-1}. As discussed in Subsection~\ref{subsection-MY-revisited}, $\beta^\gamma$ is Fr\'echet differentiable on $H$ with $\nabla \beta^\gamma(k) = \gamma(k - \pi_{K_H}(k)) \in H \hookrightarrow R^*$. By the chain rule, 
\begin{equation*}
\partial_{x_i} \beta^\gamma(-i(x_1,x_2;\omega)) = -i^*_{x_i}(x_1,x_2;\omega) \nabla\beta^\gamma(-i(x_1,x_2;\omega)).
\end{equation*}
Defining $\lambda_i^\gamma(\omega) = -\nabla \beta^\gamma(-i(x_1,x_2;\omega))$, we get the expression \eqref{eq:FOC-MY-problem-gamma-5}. Condition (ii) of \Cref{thm:KKT-basic-conditions} amounts to
\begin{equation*}
\begin{aligned}
f(x_1,x_2;\omega) &:= J_2(x_1,x_2;\omega)\vartheta^\gamma(\omega) + \beta^\gamma(-i(x_1,x_2(\omega);\omega)) \\
&\quad\quad +\langle \lambda_e^\gamma(\omega), e(x_1,x_2;\omega)\rangle_{W^*,W}-\langle \rho^\gamma(\omega),x_1\rangle_{X_1^*,X_1}
\end{aligned}
\end{equation*}
attaining its minimum at $(x_1^\gamma,x_2^\gamma(\omega))$, i.e., $0 \in \partial f(x_1^\gamma, x_2^\gamma(\omega);\omega)$, which yields \eqref{eq:FOC-MY-problem-gamma-2}--\eqref{eq:FOC-MY-problem-gamma-3}. The remaining conditions follow directly from \Cref{thm:KKT-basic-conditions}.
\end{proof}

The following characterization of the multiplier $\lambda_i^\gamma$ will be useful later.
\begin{lemma}
\label{lemma:orthogonality-lambda-i-gamma}
For all $k \in K_H$, we have
\begin{equation}
\label{eq:MY-Lagrange-multiplier-inequality}
(\lambda_i^\gamma(\omega),k)_H \geq 0
\end{equation}
and
\begin{equation*}
\label{eq:bound-feasible-point-MY-point-complementarity}
(\lambda_i^\gamma(\omega), i(x_1^\gamma,x_2^\gamma(\omega);\omega))_H \geq - (\lambda_i^\gamma(\omega),k)_H
\end{equation*}
\end{lemma}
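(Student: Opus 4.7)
The plan is to combine the explicit formula \eqref{eq:FOC-MY-problem-gamma-5} with the geometric projection characterization \eqref{eq:projection-inequality}. I will abbreviate $a := -i(x_1^\gamma, x_2^\gamma(\omega);\omega) \in H$ (using the embedding $R \hookrightarrow H$) and $p := \pi_{K_H}(a)$, so that \eqref{eq:FOC-MY-problem-gamma-5} reads $\lambda_i^\gamma(\omega) = \gamma(p - a)$. Then \eqref{eq:projection-inequality} supplies three facts that will drive the whole argument: $p \in K_H$, the orthogonality $(p,a-p)_H = 0$, and $p - a \in K_H^\oplus$.

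The first inequality is essentially just the third of these facts: since $p-a \in K_H^\oplus$ and $k \in K_H$, we have $(p-a, k)_H \geq 0$, whence $(\lambda_i^\gamma(\omega), k)_H = \gamma(p-a,k)_H \geq 0$.

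For the second inequality I would rewrite the claim equivalently as $(\lambda_i^\gamma(\omega), i(x_1^\gamma, x_2^\gamma(\omega);\omega) + k)_H \geq 0$ and expand
\begin{equation*}
(\lambda_i^\gamma(\omega), -a + k)_H = \gamma(p-a, k)_H + \gamma(p-a, -a)_H.
\end{equation*}
The first summand is nonnegative by what was shown above. For the second, I exploit the orthogonality $(p, a-p)_H = 0$, which gives $(p,a)_H = \|p\|_H^2$ and hence
\begin{equation*}
(p-a,-a)_H = -(p,a)_H + \|a\|_H^2 = \|a\|_H^2 - \|p\|_H^2 = \|a-p\|_H^2 \geq 0,
\end{equation*}
where the last equality follows from the identity $\|a-p\|_H^2 = \|a\|_H^2 - 2(a,p)_H + \|p\|_H^2 = \|a\|_H^2 - \|p\|_H^2$.

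There is no real obstacle in this proof; once the substitution $\lambda_i^\gamma(\omega) = \gamma(p-a)$ is made, the entire argument reduces to two elementary consequences of the metric projection identity \eqref{eq:projection-inequality}, namely dual-cone membership of $p-a$ and the Pythagorean-style identity $(p-a,-a)_H = \|a-p\|_H^2$.
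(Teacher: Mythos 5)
Your proof is correct and follows essentially the same route as the paper's: both arguments rest on the formula \eqref{eq:FOC-MY-problem-gamma-5} together with the three facts from the projection characterization \eqref{eq:projection-inequality} (membership of $p-a$ in $K_H^\oplus$ for the first inequality, and the orthogonality $(p,a-p)_H=0$ plus nonnegativity of $\lVert p-a\rVert_H^2$ for the second). The paper phrases the last step as $(\lambda_i^\gamma(\omega), i+\pi_{K_H}(-i))_H = \tfrac{1}{\gamma}\lVert\lambda_i^\gamma(\omega)\rVert_H^2 \geq 0$ rather than your identity $(p-a,-a)_H=\lVert a-p\rVert_H^2$, but this is only a difference in bookkeeping.
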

\begin{proof}
Thanks to \eqref{eq:projection-inequality} and \eqref{eq:FOC-MY-problem-gamma-5}, we have $\lambda_i^\gamma(\omega) \in K_H^\oplus$ a.s.~and thus the expression \eqref{eq:MY-Lagrange-multiplier-inequality} holds. Additionally, \eqref{eq:projection-inequality} implies $(\lambda_i^\gamma(\omega),\pi_{K_H}(-i(x_1^\gamma,x_2^\gamma(\omega);\omega))_H=0$, so
\begin{equation}
\label{eq:partial-proof-bound-feasible-point-MY-point-complementarity}
(\lambda_i^\gamma(\omega),k)_H \geq (\lambda_i^\gamma(\omega),\pi_{K_H}(-i(x_1^\gamma,x_2^\gamma(\omega);\omega))_H.
\end{equation}
Adding $(\lambda_i^\gamma(\omega), i(x_1^\gamma,x_2^\gamma(\omega);\omega))_H$ to both sides of \eqref{eq:partial-proof-bound-feasible-point-MY-point-complementarity} gives
\begin{equation*}
(\lambda_i^\gamma(\omega),i(x_1^\gamma,x_2^\gamma(\omega);\omega)+k)_H \geq 0.
\end{equation*}
\end{proof}

Let $\mathcal{X}:=X\times \Lambda \times L^1(\Omega,X_1^*) \times X_1^* \times X_1^* \times (L^1(\Omega, X_1^*) \times L^1(\Omega, X_2^*)).$  In the following, we use the shorthand ${\phi}^\gamma:=(\phi_1^\gamma, \phi_2^\gamma)=(\zeta_1^\gamma \vartheta^\gamma, \zeta_2^\gamma \vartheta^\gamma)$.
We define the primal-dual path 
\begin{equation*}
\mathcal{C}_r := \{ (x^\gamma, \lambda^{\gamma}, \rho^{\gamma}, \eta^\gamma, \xi^\gamma, \phi^\gamma) \in \mathcal{X}\colon \gamma \in [r,\infty)\}.
\end{equation*}

\begin{proposition}
\label{prop:primal-dual-path-bounded}
Let \Cref{assumption:general-problem}, \Cref{assumption:existence-saddle-points}, and \Cref{assumption:example-problem-MY-i}--\ref{assumption:example-problem-MY-ii} hold. Then, for every $r\geq 0$, the path $\mathcal{C}_r$ is bounded in $\mathcal{X}$. 
\end{proposition}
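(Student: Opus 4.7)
I would bound each of the six components of $\mathcal{X}$ in turn, starting with the primal iterate $x^\gamma$ and the ``cheap'' dual pieces, isolating the single nontrivial step (the $L^1$-bound on the inequality multiplier $\lambda_i^\gamma$), and then recovering the remaining multipliers by back-substitution through the KKT system \eqref{eq:FOC-MY-problem-gamma-1}--\eqref{eq:FOC-MY-problem-gamma-5}.

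Boundedness of $\{x^\gamma\}$ in $X$ is essentially already contained in \Cref{lemma:MY-basic-convergence}: the inequality $j^\gamma(x^\gamma) \leq j(\tilde x)$ for any feasible $\tilde x$ shows that $\{x^\gamma\}$ is a minimizing sequence for $j$, and either boundedness of $C$ or radial unboundedness of $j$ then forces a uniform bound. Continuity of $J_2\colon X\to L^p(\Omega)$ (\Cref{rem:growth-conditions}) renders $\{J_2(x^\gamma)\}$ bounded in $L^p(\Omega)$, and finiteness and continuity of $\cR$ imply local boundedness of $\partial \cR$, so $\vartheta^\gamma \in \partial \cR[J_2(x^\gamma)]$ is bounded in $L^{p'}(\Omega)$. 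Analogously, $\partial J_1$ is locally bounded on the reflexive space $X_1$, which controls $\eta^\gamma$. The pointwise majoration $\lVert \zeta^\gamma(\omega)\rVert \le 2 a_r(\omega)$ from \Cref{rem:weak-compactness-subdifferential-J2}, together with H\"older's inequality, gives $\phi^\gamma = \zeta^\gamma \vartheta^\gamma$ bounded in $L^1(\Omega,X_1^*)\times L^1(\Omega,X_2^*)$.

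The main step is the $L^1(\Omega,R^*)$-bound on $\lambda_i^\gamma$. Here I would use the constraint qualification \eqref{eq:constraint-qualification}: $0\in\mathrm{int}\,\mathrm{dom}\,v$ together with a Robinson--Ursescu-type metric regularity of the feasibility multifunction $u\mapsto F_{\mathrm{ad},u}$ provides $\sigma>0$ such that for every $r\in L^\infty(\Omega,R)$ with $\lVert r\rVert_{L^\infty(\Omega,R)}\le 1$ there exists $\hat x_r = (\hat x_{r,1},\hat x_{r,2}) \in X$ with $\hat x_{r,1}\in C$, $e(\hat x_r)=0$ a.s., $-i(\hat x_r)-\sigma r \in K$ a.s., and $\lVert \hat x_r\rVert_X$ bounded uniformly in $r$. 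Testing \eqref{eq:FOC-MY-problem-gamma-2} against $\hat x_{r,1}-x_1^\gamma$ and \eqref{eq:FOC-MY-problem-gamma-3} against $\hat x_{r,2}(\omega)-x_2^\gamma(\omega)$ pointwise, summing, taking expectation, and using linearity of $e$ together with $e(\hat x_r)=0=e(x^\gamma)$ to cancel the $\lambda_e^\gamma$-contribution yields
\begin{equation*}
\E[\langle \phi^\gamma,\hat x_r-x^\gamma\rangle]+\E[\langle \lambda_i^\gamma,\, D_i(x^\gamma;\hat x_r-x^\gamma)\rangle] = -\langle \eta^\gamma+\xi^\gamma,\hat x_{r,1}-x_1^\gamma\rangle,
\end{equation*}
where $D_i(x^\gamma;\cdot)$ denotes the linearization of $i$ at $x^\gamma$. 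Next, $K$-convexity of $i$ combined with $\lambda_i^\gamma(\omega)\in K_H^\oplus\subset K^\oplus$ a.s.~(from \eqref{eq:projection-inequality} and $K\subset K_H$) gives $\langle \lambda_i^\gamma, D_i(x^\gamma;\hat x_r-x^\gamma)\rangle \le \langle \lambda_i^\gamma,i(\hat x_r)-i(x^\gamma)\rangle$; the pointwise identity $(\lambda_i^\gamma(\omega),i(x^\gamma)(\omega))_H = \lVert \lambda_i^\gamma(\omega)\rVert_H^2/\gamma\ge 0$ (a short computation from \eqref{eq:FOC-MY-problem-gamma-5} and the two equalities in \eqref{eq:projection-inequality}) and the strict-feasibility consequence $\langle \lambda_i^\gamma,i(\hat x_r)\rangle \le -\sigma\langle \lambda_i^\gamma,r\rangle$ reduce the left side to a bound by $-\sigma\E[\langle \lambda_i^\gamma,r\rangle]$ plus a bounded term; on the right side the subgradient/normal-cone estimates $\langle \eta^\gamma,\hat x_{r,1}-x_1^\gamma\rangle \le J_1(\hat x_{r,1})-J_1(x_1^\gamma)$ and $\langle \xi^\gamma,\hat x_{r,1}-x_1^\gamma\rangle\le 0$ give an upper bound independent of $\gamma$. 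The resulting inequality $\sigma\E[\langle \lambda_i^\gamma,r\rangle]\le C$ holds with $C$ uniform in $r$ and $\gamma$; taking the supremum over the unit ball of $L^\infty(\Omega,R)$ and invoking \Cref{thm:ioffe-levin} (the dual-pairing norm of the absolutely continuous functional $\lambda_i^\gamma$ agrees with its $L^1(\Omega,R^*)$-norm) then yields $\lVert \lambda_i^\gamma\rVert_{L^1(\Omega,R^*)}\le C/\sigma$.

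The remaining bounds are then immediate by back-substitution. \eqref{eq:FOC-MY-problem-gamma-3}, combined with $(e_{x_2}^*)^{-1}\in L^\infty(\Omega,\mathcal{L}(X_2^*,W^*))$ (\Cref{assumption:example-problem-MY-i}) and essential boundedness of $i_{x_2}^*(x^\gamma;\cdot)$ (\Cref{assumption:example-problem-MY-ii} and \Cref{lemma:regularity-operators}), gives $\lambda_e^\gamma$ bounded in $L^1(\Omega,W^*)$; \eqref{eq:FOC-MY-problem-gamma-2} then gives $\rho^\gamma$ bounded in $L^1(\Omega,X_1^*)$; and $\xi^\gamma=-\eta^\gamma-\E[\rho^\gamma]$ from \eqref{eq:FOC-MY-problem-gamma-1} is bounded in $X_1^*$. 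The principal obstacle is the construction of the uniformly bounded selection $\{\hat x_r\}$ from the abstract CQ $0\in\mathrm{int}\,\mathrm{dom}\,v$: the cone $K$ may have empty interior in $R$, so no classical Slater point is available; the argument nevertheless goes through because the strict slack is taken in the ambient space $U=L^\infty(\Omega,W)\times L^\infty(\Omega,R)$ rather than in $K$, and a standard open-mapping/Robinson--Ursescu argument applied to the perturbed feasibility system $G(x)+u\in \hat K$ of \eqref{eq:metric-regularity} then produces the required bounded selection.
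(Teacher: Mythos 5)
Your proposal is correct and follows essentially the same route as the paper's proof: bound $x^\gamma$, $\vartheta^\gamma$, $\eta^\gamma$, $\phi^\gamma$ first, then obtain the $L^1(\Omega,R^*)$-bound on $\lambda_i^\gamma$ by combining the constraint qualification with the complementarity properties of the Moreau--Yosida multiplier, $K$-convexity of $i$, and the stationarity system (using $\xi^\gamma\in N_C(x_1^\gamma)$ to discard the normal-cone term), and finally recover $\lambda_e^\gamma$, $\rho^\gamma$, $\xi^\gamma$ by back-substitution. The only deviations are cosmetic (you integrate the stationarity equations before estimating, and you justify the uniformly bounded feasible selection $\hat x_r$ via Robinson--Ursescu, where \Cref{subasu:existence-saddle-points-i} combined with local boundedness of the value function $v$ already suffices).
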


\begin{proof}
\textit{Bounding of $x^\gamma$, $\eta^\gamma$, $\phi^\gamma$.}
Making the same arguments as in \Cref{lemma:MY-basic-convergence}, $x^\gamma$ is bounded independently of $\gamma$, from which boundedness of $\eta^\gamma$ immediately follows.  Notice that $\vartheta^\gamma \in \partial \cR[J_2(x^\gamma)]$, implying boundedness of $\vartheta^\gamma$. (This follows from the fact that $\cR$ is finite, convex, and continuous on $L^p(\Omega)$, making it locally Lipschitz on $L^p(\Omega)$; see \cite[Proposition 2.107]{Bonnans2013}. It follows that the subdifferential maps bounded sets to bounded sets by \Cref{lem:subdifferential-bounded-to-bounded}.)
Using the relations \eqref{eq:biconjugate-risk-functional} and \eqref{eq:subdifferentiability-expression}, since $\E[J_2(x^\gamma)\vartheta^\gamma] = \cR[J_2(x^\gamma)]+\cR^*[\vartheta^\gamma],$ and by \Cref{rem:weak-compactness-subdifferential-J2}, $\phi^\gamma$ (after identification) belongs to $\partial \E[ J_2(x^\gamma) \vartheta^\gamma]$, boundedness of $x^\gamma$ and $\vartheta^\gamma$ implies boundedness of $\phi^\gamma$.

\textit{Bounding of $\lambda_i^{\gamma}$.}
In the following, statements written as depending on $\omega$ are to be understood as holding almost surely. From \eqref{eq:constraint-qualification}, there exists $r>0$ such that for all $u=(u_e,u_i)\in B_r(0)=B_{r,e}(0)\times B_{r,i}(0) \subset L^\infty(\Omega,W) \times L^\infty(\Omega,R)$, there exist $k \in K$ and $x^0 = (x_1^0,x_2^0(\cdot))$ satisfying $x_1^0 \in C$ and
\begin{align}
e_{x_2}(\omega) x_2^0(\omega) + e_{x_1}(\omega) x_1^0 - u_e(\omega) &=0\ \nonumber\\ %label{eq:expression-equality-constraint-derivative}\\
-i(x_1^0,x_2^0(\omega);\omega) - u_i(\omega) &=k. \label{eq:expression-conical-constraint-derivative}
\end{align}

Now, \eqref{eq:expression-conical-constraint-derivative} combined with
\Cref{lemma:orthogonality-lambda-i-gamma} gives
\begin{equation*}
\label{eq:bound-Lagrange-multiplier-point}
\begin{aligned}
( \lambda_i^\gamma(\omega),u_i(\omega))_{H}&=  (\lambda_i^\gamma(\omega),i(x_1^0,x_2^0(\omega);\omega) +u_i(\omega) )_H - (\lambda_i^\gamma(\omega),i(x_1^0,x_2^0(\omega);\omega) )_H  \\
&=  -(\lambda_i^\gamma(\omega),k)_{H} - (\lambda_i^\gamma(\omega),i(x_1^0,x_2^0(\omega);\omega) )_H  \\
&\leq (\lambda_i^\gamma(\omega),i(x_1^\gamma,x_2^\gamma(\omega);\omega) )_H - (\lambda_i^\gamma(\omega),i(x_1^0,x_2^0(\omega);\omega) )_H  \\
&\leq (\lambda_i^\gamma(\omega),i_{x_1}(x_1^\gamma,x_2^\gamma(\omega);\omega)(x_1^\gamma-x_1^0))_H \\
&\quad + (\lambda_i^\gamma(\omega),i_{x_2}(x_1^\gamma,x_2^\gamma(\omega);\omega)(x_2^\gamma(\omega)-x_2^0(\omega)) )_H,
 \end{aligned}
\end{equation*}
where we used convexity of $(x_1,x_2) \mapsto (\lambda_i^\gamma, i(x_1,x_2,\omega))_H$ in the second inequality.
Recalling \eqref{eq:rigged-space} and using  \eqref{eq:FOC-MY-problem-gamma-2}--\eqref{eq:FOC-MY-problem-gamma-4}, we obtain
\begin{equation*}
\label{eq:bound-Lagrange-multiplier-point-2}
\begin{aligned}
\langle  \lambda_i^\gamma(\omega),u_i(\omega)\rangle_{R^*,R} &\leq \langle \rho^\gamma(\omega)-\phi_1^\gamma(\omega), x_1^\gamma-x_1^0\rangle_{X_1^*,X_1} - \langle\phi_2^\gamma(\omega), x_2^\gamma(\omega)-x_2^0(\omega)\rangle_{X_2^*,X_2}. 
\end{aligned}
\end{equation*}
Thus 
\begin{equation}
\label{eq:bound-Lagrange-multiplier-point-3}
\begin{aligned}
\langle  \lambda_i^\gamma,u_i \rangle_{L^1(\Omega,R^*),L^\infty(\Omega,R)}& \leq \langle \E[\rho^\gamma -\phi_1^\gamma], x_1^\gamma-x_1^0\rangle_{X_1^*,X_1} - \E[\langle \phi_2^\gamma, x_2^\gamma-x_2^0\rangle_{X_2^*,X_2}]\\
& \leq \langle -\xi^\gamma-\eta^\gamma-\E[\phi_1^\gamma], x_1^\gamma-x_1^0\rangle_{X_1^*,X_1} - \E[\langle \phi_2^\gamma, x_2^\gamma-x_2^0\rangle_{X_2^*,X_2}]\\
& \leq \langle -\eta^\gamma-\E[\phi_1^\gamma], x_1^\gamma-x_1^0\rangle_{X_1^*,X_1} - \E[\langle \phi_2^\gamma, x_2^\gamma-x_2^0 \rangle_{X_2^*,X_2}]\\
& \leq (\lVert \eta^\gamma \rVert_{X_1^*} + \lVert \phi_1^\gamma \rVert_{L^1(\Omega,X_1^*)}) \lVert x_1^\gamma - x_1^0\rVert_{X_1}  \\
&\quad+ \lVert \phi_2^\gamma \rVert_{L^1(\Omega, X_2^*)} \lVert x_2^\gamma - x_2^0 \rVert_{L^\infty(\Omega,X_2)},
\end{aligned}
\end{equation}
where in the third inequality we used $\xi^\gamma \in N_C(x_1^\gamma)$, and in the fourth inequality we used Jensen's and H\"older's inequalities.

In the following, we use a generic constant $c>0$, which is independent of $\omega$ and takes potentially different values at each use.
Since $ x^\gamma$, $\eta^\gamma$, and $\phi^\gamma$ are bounded independently of $\gamma$,  \eqref{eq:bound-Lagrange-multiplier-point-3} yields
\begin{equation*}
\langle  \lambda_i^{\gamma},u_i\rangle_{L^1(\Omega,R^*),L^\infty(\Omega,R)}  \leq c < \infty
\end{equation*}
and hence 
\begin{equation*}
\lVert \lambda_i^{\gamma}\rVert_{L^1(\Omega,R^*)} \leq \frac{1}{r} \sup_{y \in B_{r,i}(0) } \{  \langle \lambda_i^{\gamma}, y \rangle_{L^1(\Omega,R^*),L^\infty(\Omega,R)} \} \leq \frac{1}{r} c < \infty.
\end{equation*}

\textit{Bounding of $\lambda_e^{\gamma}$, $\rho^{\gamma}$, $\xi^\gamma$.} 
Now, \eqref{eq:FOC-MY-problem-gamma-3} and \Cref{assumption:example-problem-MY-i}--\ref{assumption:example-problem-MY-ii} yield
\begin{equation*}
\label{eq:bound-lambda_e}
\begin{aligned}
 \lVert  \lambda_e^{\gamma}\rVert_{L^1(\Omega,W^*)} &=  \lVert -e_{x_2}^{-*} (i^*_{x_2}(x^\gamma)\lambda_i^\gamma + \phi_2^\gamma)\rVert_{L^1(\Omega,W^*)}\\
& \leq c \lVert i^*_{x_2}(x^\gamma)\lambda_i^\gamma + \phi_2^\gamma \rVert_{L^1(\Omega,X_2^*)}\\
& \leq c ( \lVert \lambda_i^\gamma\rVert _{L^1(\Omega,R^*)} + \lVert \phi_2^\gamma \rVert_{L^1(\Omega,X_2^*)}) \leq c < \infty.
\end{aligned}
\end{equation*}
Additionally,  \eqref{eq:FOC-MY-problem-gamma-2} gives
\begin{equation*}
\label{eq:bound-lambda_e}
\begin{aligned}
\lVert \rho^{\gamma}\rVert_{L^1(\Omega,X_1^*)}& =  \lVert \phi_1^\gamma + e_{x_1}^{*} \lambda_e^\gamma +i_{x_1}^*(x^\gamma) \lambda_i^\gamma \rVert_{L^1(\Omega,X_1^*)}\\
&  \leq c(\lVert \phi_1^\gamma \rVert_{L^1(\Omega, X_1^*)} + \lVert \lambda_e^{\gamma}\rVert_{L^1(\Omega,W^*)} +\lVert \lambda_i^{\gamma}\rVert_{L^1(\Omega,R^*)}) \leq c < \infty.
\end{aligned}
\end{equation*}
Finally, \eqref{eq:FOC-MY-problem-gamma-1} combined with Jensen's inequality yields
\begin{equation*}
\lVert \xi^\gamma \rVert_{X_1^*} \leq \lVert \eta^\gamma \rVert_{X_1^*} + \E[ \lVert \rho^\gamma \rVert_{X_1^*}] \leq c < \infty.
\end{equation*}
\end{proof}

We now show optimality of accumulation points of the primal-dual path $\mathcal{C}_0$. Here, a central difficulty is that the Banach--Alaoglu theorem cannot be applied to obtain weak* compactness of closed bounded subsets of $L^1(\Omega,X^*)$  ($X$ being a generic reflexive Banach space), as it would require understanding $L^1(\Omega,X^*)$ \textit{as a dual} of another Banach space. %It is not clear what form the predual takes. 
Another difficulty is that weakly* compact subsets of $L^1(\Omega,X^*)$ are not weakly* \textit{sequentially} compact. 

We will use two tools to circumvent these difficulties: first, we will identify $L^1(\Omega,X^*)$ with a closed subspace of the bidual space $(L^1(\Omega,X^*))^{**}=(L^\infty(\Omega,X))^*$. This is always possible, and in our setting, this corresponds to the relationship between absolutely continuous functionals and their integrable functionals via the expression \eqref{eq:absolutely-continuous-functionals}. The absolutely continuous functional corresponding to $\lambda^\gamma$ is denoted by $\lambda^{\gamma,a}$; the other functionals are defined similarly. 

To circumvent the second difficulty, we will work with a \textit{net} (generalized sequence) in $\mathcal{C}_0$ instead of a sequence. Given a directed set $I$ on which a relation $\preceq$ is defined, which is reflexive, transitive, and directed, a net $(x^\alpha)_{\alpha \in I}$ on $X$ is a map $x:I \rightarrow X$. A net $(y^{\beta})_{\beta \in J}$ is a subnet of $(x^\alpha)_{\alpha \in I}$ via $\varphi:J\rightarrow I$ if for all $\beta \in J$, $y^{\beta} = x^{\varphi(\beta)}$ and for every $\alpha_0 \in I$, there exists $\beta_0 \in J$ for which $\beta_0 \preceq \beta$ implies $\alpha_0 \preceq \varphi(\beta)$. We remind that $x^\alpha$ converges to an accumulation point $x$ if $(x^\alpha)_{\alpha \in I}$ lies eventually in every neighborhood of $x$, i.e., for any neighborhood $V$ of $x$, there exists $\alpha_0 \in I$ such that for all $\alpha \succeq \alpha_0$, we have $x^\alpha \in V$. For our purposes, the choice $(I,\preceq) = ([0,\infty),\leq)$ will be sufficient. Notably, topological notions of closure and continuity carry over for nets. We will write $\rightarrow$, $\rightharpoonup$, $\rightharpoonup^*$ to emphasize topologies when taking limits.
Nets on compact sets in Hausdorff spaces have the following convenient property: there exists a subnet that converges to a point in the set; see, e.g., \cite[Section 1.4, 1.7, 1.9]{Bauschke2011} for definitions and properties.

\begin{theorem}
\label{thm:optimality-of-limit-primal-dual}
Let \Cref{assumption:general-problem}, \Cref{assumption:existence-saddle-points}, and \Cref{assumption:example-problem-MY} hold, and observe the net generated by the primal-dual path $(x^{\gamma'}, \lambda^{\gamma'}, \rho^{\gamma'}, \eta^{\gamma'}, \xi^{\gamma'}, \phi^{\gamma'})_{\gamma' \in [0,\infty)} \subset \mathcal{C}_0$, with $\phi^{\gamma'} = \zeta^{\gamma'}\vartheta^{\gamma'}$. Then there exists for $J \subset [0,\infty)$ a subnet $(x^\gamma, \lambda^{\gamma,a}, \rho^{\gamma,a}, \eta^{\gamma}, \xi^\gamma, \phi^{\gamma,a})_{\gamma \in J}$, a sequence ${\gamma_n} \subset J$ with $\gamma_n \rightarrow \infty$, and an accumulation point $(\hat{x}, \lambda', \rho', \hat{\eta}, \hat{\xi}, \phi')$, $\hat{\zeta} \in L^p(\Omega,X_1^*) \times L^p(\Omega,X_2^*)$, and $\hat{\vartheta} \in L^{p'}(\Omega)$ such that
\begin{enumerate}[itemsep=0mm]
    \item $x^{\gamma_n} \rightarrow \hat{x}$ in $X$,
    \item $ \lambda_e^{\gamma,a} \rightharpoonup^* {\lambda}'_e$ in $(L^\infty(\Omega,W))^*$,
    \item $\lambda_i^{\gamma,a}  \rightharpoonup^* {\lambda}'_i$ in $(L^\infty(\Omega,R))^*$,
    \item $\rho^{\gamma, a}  \rightharpoonup^* {\rho}'$ in $(L^\infty(\Omega,X_1))^*$,
    \item $\eta^{\gamma_n} \rightharpoonup \hat{\eta}$ in $X_1^*$,
    \item $\xi^{\gamma_n} \rightharpoonup \hat{\xi}$ in $X_1^*$,
       \item $\phi^{\gamma,a} \rightharpoonup^* \phi'$ in $(L^\infty(\Omega,X_1))^*\times (L^\infty(\Omega,X_2))^*$ with  $\vartheta^{\gamma_n} \rightharpoonup^* \hat{\vartheta}$ in $L^{p'}(\Omega)$ and $\zeta^{\gamma_n}(\omega) \rightharpoonup \hat{\zeta}(\omega)$ in $X_1^* \times X_2^*$ a.s.
\end{enumerate}
We have ${\lambda}'_e=\hat{\lambda}_e^a + \hat{\lambda}_e^\circ$,  ${\lambda}'_i=\hat{\lambda}_i^a +\hat{\lambda}_i^\circ$,  ${\rho}'=\hat{\rho}^a+\hat{\rho}^\circ$, and $\phi' = \hat{\phi}^a + \hat{\phi}^\circ$ for the absolutely continuous functionals $\hat{\lambda}_e^a \in (L^\infty(\Omega,W))^*$, $\hat{\lambda}_i^a \in (L^\infty(\Omega,R))^*$ $\hat{\rho}^a \in (L^\infty(\Omega,X_1))^*$, and $\hat{\phi}^a\in (L^\infty(\Omega,X_1))^*\times (L^\infty(\Omega,X_2))^*$,  corresponding to $\hat{\lambda}_e \in L^1(\Omega,W^*)$,  $\hat{\lambda}_i \in L^1(\Omega,R^*)$, $\hat{\rho} \in L^1(\Omega,X_1^*)$, and $\hat{\phi} \in (L^1(\Omega,X_1^*)\times L^1(\Omega,X_2^*)),$ respectively.  Finally, the accumulation point $(\hat{x},\hat{\lambda},\hat{\lambda}^\circ,\hat{\rho},\hat{\rho}^\circ,\hat{\eta}, \hat{\xi}, \hat{\phi}, \hat{\phi}^\circ)$ satisfies the following conditions: $\hat{\eta} \in \partial J_1(\hat{x}_1)$, $\hat{\xi} \in N_C(\hat{x}_1)$, $\hat{\vartheta} \in \partial \cR[J_2(\hat{x})]$, $\hat{\zeta}(\omega) \in \partial J_2(\hat{x}_1,\hat{x}_2(\omega);\omega)$ a.s., and: %, which are necessary and sufficient for optimality: %.
\begin{subequations}
\label{eq:FOC-MY-problem-limit}
\begin{align}
 \hat{\eta} +\E[\hat{\rho}] + \hat{\phi}_1^\circ+ e_{x_1}^* \hat{\lambda}^\circ_e+i_{x_1}^*(\hat{x})\hat{\lambda}^\circ_i +\hat{\xi} &= 0,\label{eq:FOC-MY-problem'-1}\\
 %\hat{\phi}_1^\circ+ e_{x_1}^* \hat{\lambda}^\circ_e+i_{x_1}^*(\hat{x})\hat{\lambda}^\circ_i- \hat{\rho}^\circ &=0 \label{eq:FOC-MY-problem'-1a}\\
\hat{\phi}_2^\circ+e_{x_2}^* \hat{\lambda}_e^\circ + i_{x_2}^*(\hat{x}) \hat{\lambda}_i^\circ &=0,\label{eq:FOC-MY-problem'-2}\\
\hat{\phi}_1(\omega) +e_{x_1}^*(\omega) \hat{\lambda}_e(\omega)+ i_{x_1}^*(\hat{x}_1,\hat{x}_2(\omega);\omega) \hat{\lambda}_i(\omega) -\hat{\rho}(\omega) &=0,\label{eq:FOC-MY-problem'-3}\\
\hat{\phi}_2(\omega) + e_{x_2}^*(\omega) \hat{\lambda}_e(\omega) +i_{x_2}^*(\hat{x}_1,\hat{x}_2(\omega);\omega) \hat{\lambda}_i(\omega)&= 0,\label{eq:FOC-MY-problem'-4}\\
\hat{x}_1 \in C, \quad e(\hat{x}_1, \hat{x}_2(\omega);\omega) &=0,\label{eq:FOC-MY-problem'-5}\\
 i(\hat{x}_1,\hat{x}_2(\omega);\omega)\leq_K 0, \quad \hat{\lambda}_i(\omega) \in K^{\oplus}, \quad  \langle \hat{\lambda}_i(\omega), i(\hat{x}_1,\hat{x}_2(\omega);\omega)\rangle_{R^*,R}&=0,\label{eq:FOC-MY-problem'-6} \\
 \, \hat{\lambda}_i^\circ \in \mathcal{K}^{\oplus}, \quad \langle \hat{\lambda}_i^\circ, i(\hat{x})\rangle_{(L^\infty(\Omega,R))^*,L^\infty(\Omega,R)} &= 0,\label{eq:FOC-MY-problem'-7}
\end{align}
\end{subequations}
with pointwise conditions holding for almost all $\omega \in \Omega$.
\end{theorem}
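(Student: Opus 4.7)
The plan is to combine the boundedness from \Cref{prop:primal-dual-path-bounded} with compactness arguments in the biduals, pass to the limit in the regularized KKT system of \Cref{lemma:optimality-regular-problem}, and identify the limit with the KKT system of \Cref{lemma:KKT-general}. I would first identify each integrable multiplier $\lambda_e^{\gamma}, \lambda_i^{\gamma}, \rho^{\gamma}, \phi^{\gamma}$ via \eqref{eq:absolutely-continuous-functionals} with its absolutely continuous functional in the bidual $(L^\infty(\Omega,\cdot))^*$, which is a dual of a separable Banach space. \Cref{prop:primal-dual-path-bounded} and Banach--Alaoglu then furnish weak-$*$ compactness; since these weak-$*$ topologies need not be sequential, I extract \emph{subnets} (as the theorem permits) indexed by some $J \subset [0,\infty)$ to obtain limits $\lambda_e', \lambda_i', \rho', \phi'$, and apply \Cref{thm:ioffe-levin} to decompose each one into its absolutely continuous part (identified via \eqref{eq:absolutely-continuous-functionals} with the claimed $\hat{\lambda}_e \in L^1(\Omega,W^*)$, etc.) and its singular counterpart.

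For the primal, since $X_1$ is reflexive and bounded balls in $L^\infty(\Omega,X_2) = (L^1(\Omega,X_2^*))^*$ are weak-$*$ metrizable, I can extract from within $J$ a genuine sequence $\gamma_n\to\infty$ with $x^{\gamma_n}\rightharpoonup^*\hat{x}$. By \Cref{lemma:MY-basic-convergence}, $\hat{x}$ solves \eqref{eq:model-problem-abstract-long} and $j(x^{\gamma_n})\to j(\hat{x})$; \Cref{assumption:example-problem-MY-iii} combined with weak convergence in the reflexive $X_1$ upgrades this to strong $x_1^{\gamma_n}\to\hat{x}_1$, which \eqref{eq:solution-PDE-abstract} propagates to $x_2^{\gamma_n}\to\hat{x}_2$ in $L^\infty(\Omega,X_2)$. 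Along this sequence, reflexivity of $X_1^*$ and \Cref{prop:primal-dual-path-bounded} give $\eta^{\gamma_n}\rightharpoonup\hat{\eta}$ and $\xi^{\gamma_n}\rightharpoonup\hat{\xi}$ in $X_1^*$, boundedness of $\{\vartheta^\gamma\}$ in $L^{p'}(\Omega)$ yields $\vartheta^{\gamma_n}\rightharpoonup^*\hat{\vartheta}$, and \Cref{rem:weak-compactness-subdifferential-J2} supplies $\zeta^{\gamma_n}(\omega)\rightharpoonup\hat{\zeta}(\omega)$ a.s. Standard weak/strong closedness of convex subdifferentials then delivers $\hat{\eta}\in\partial J_1(\hat{x}_1)$, $\hat{\xi}\in N_C(\hat{x}_1)$, $\hat{\vartheta}\in\partial\cR[J_2(\hat{x})]$, and $\hat{\zeta}(\omega)\in\partial J_2(\hat{x}_1,\hat{x}_2(\omega);\omega)$ a.s.

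Passing to the limit in \eqref{eq:FOC-MY-problem-gamma-2}--\eqref{eq:FOC-MY-problem-gamma-3} lifted to the bidual yields equations in $(L^\infty(\Omega,X_k))^*$. By \Cref{lemma:regularity-operators}(i), the adjoints $e^*_{x_k}$ and $i^*_{x_k}(\hat{x})$ map singular functionals to singular (and, by the analogous formula, absolutely continuous to absolutely continuous), so the Ioffe--Levin decomposition is respected by each term; uniqueness in \Cref{thm:ioffe-levin} then splits each limit into an absolutely continuous part giving \eqref{eq:FOC-MY-problem'-3}--\eqref{eq:FOC-MY-problem'-4} and a singular part. Combining the singular parts with the weak limit of \eqref{eq:FOC-MY-problem-gamma-1}, paired with constants in $X_1\hookrightarrow L^\infty(\Omega,X_1)$ as in the proof of \Cref{thm:extended-KKT-conditions}, produces \eqref{eq:FOC-MY-problem'-1}--\eqref{eq:FOC-MY-problem'-2}. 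Feasibility \eqref{eq:FOC-MY-problem'-5} is already contained in \Cref{lemma:MY-basic-convergence} (and \Cref{corollary:affine-maps-are-weak-star-closed}). The sign conditions $\hat{\lambda}_i\in K^\oplus$ a.s.~and $\hat{\lambda}_i^\circ\in\mathcal{K}^\oplus$ follow by passing $(\lambda_i^\gamma(\omega),k)_H\geq 0$ for $k\in K_H$ from \Cref{lemma:orthogonality-lambda-i-gamma} to the limit; the complementarity in \eqref{eq:FOC-MY-problem'-6}--\eqref{eq:FOC-MY-problem'-7} comes from pairing the Moreau--Yosida identity $(\lambda_i^\gamma,i(x^\gamma))_H=\gamma^{-1}\|\lambda_i^\gamma\|_H^2$ (obtained from \eqref{eq:FOC-MY-problem-gamma-5} and \eqref{eq:projection-inequality}) with strong convergence of $i(x^{\gamma_n})$ and the sign conditions, while \Cref{rem:interpretation-ell-condition} pins down the singular part.

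The main obstacle is that $L^1(\Omega,X^*)$ is not a dual space and its bounded sets fail to be weakly-$*$ compact on their own; the only available compactness is in the bidual, and it is not sequential. Consequently, $\lambda^\gamma$ yields no pointwise a.s.~limit directly, and recovering the almost sure equations \eqref{eq:FOC-MY-problem'-3}--\eqref{eq:FOC-MY-problem'-4} from a weak-$*$ bidual limit is the delicate step. It rests on the preservation of both absolutely continuous and singular parts by adjoints of operator-valued random variables (\Cref{lemma:regularity-operators}(i)) and on the uniqueness of the Ioffe--Levin decomposition, which together permit separating the regular and singular equations in the limit.
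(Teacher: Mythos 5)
Your proposal is correct and follows essentially the same route as the paper's proof: boundedness of the path from \Cref{prop:primal-dual-path-bounded}, identification of the integrable multipliers with absolutely continuous functionals in the biduals, subnet extraction for the non-metrizable weak* limits together with a genuine sequence for $x^\gamma,\eta^\gamma,\xi^\gamma$, the Ioffe--Levin decomposition combined with \Cref{lemma:regularity-operators} and the complementarity $\mathcal{A}\cap\mathcal{S}=\{0\}$ to separate the regular equations \eqref{eq:FOC-MY-problem'-3}--\eqref{eq:FOC-MY-problem'-4} from the singular ones \eqref{eq:FOC-MY-problem'-1}--\eqref{eq:FOC-MY-problem'-2}, and the sign/feasibility argument for \eqref{eq:FOC-MY-problem'-6}--\eqref{eq:FOC-MY-problem'-7}. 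The only noteworthy deviation is the complementarity step, where you use the exact identity $(\lambda_i^\gamma,i(x^\gamma))_H=\gamma^{-1}\lVert\lambda_i^\gamma\rVert_H^2$ in place of the paper's inequality chain via nonexpansiveness of $\pi_{K_H}$; both give the same two-sided bound forcing $\langle\lambda_i',i(\hat{x})\rangle=0$ and hence the vanishing of the regular and singular parts separately.
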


\begin{proof}
\textit{Characterization of accumulation points and topologies.}
By Proposition~\ref{prop:primal-dual-path-bounded}, for every $r\geq 0$, the path $\mathcal{C}_r$ is bounded in $\mathcal{X}$, in particular for $r=0$. Thus $\mathcal{C}_0$ is weakly* compact and hence there exists a subnet $(x^{\gamma}, \lambda^{\gamma}, \rho^\gamma, \eta^\gamma, \xi^\gamma, \phi^\gamma)_{\gamma \in J} \subset \mathcal{C}_0$ such that (after identifying absolutely continuous terms), we have $(x^{\gamma}, \lambda^{\gamma, a}, \rho^{\gamma,a}, \eta^\gamma, \xi^\gamma, \phi^{\gamma,a}) \rightharpoonup^* (\hat{x}, \lambda', \rho', \hat{\eta}, \hat{\xi}, \phi').$  Note that the decomposition of accumulation points, i.e., ${\lambda}'_e=\hat{\lambda}_e^a + \hat{\lambda}_e^\circ$,  ${\lambda}'_i=\hat{\lambda}_i^a +\hat{\lambda}_i^\circ$,  ${\rho}'=\hat{\rho}^a+\hat{\rho}^\circ$, and $\phi' = \hat{\phi}^a + \hat{\phi}^\circ$ follows from \Cref{thm:ioffe-levin}.

One can prove $j(x^{\gamma}) \rightarrow j(\hat{x})$ for the subnet $(x^\gamma)_{\gamma \in J}$ exactly as in \Cref{lemma:MY-basic-convergence}; see in particular \eqref{eq:optimality-of-limit-point}. Together with \Cref{assumption:example-problem-MY-iii}, we obtain $\lVert x_1^{\gamma}\rVert_{X_1} \rightarrow \lVert \hat{x}_1 \rVert_{X_1}$ as $\gamma \rightarrow \infty$ and thus $x_1^{\gamma} {\rightarrow} \hat{x}_1$ in $X_1$. From the formula \eqref{eq:solution-PDE-abstract} we have
\begin{equation*}
\esssup_{\omega \in \Omega} \lVert x_2^\gamma(\omega)-\hat{x}_2(\omega)\rVert_{X_2} = \esssup_{\omega \in \Omega}  \lVert e_{x_2}^{-1}(\omega)e_{x_1}(\omega) (x_1^\gamma-\hat{x}_1) \rVert_{X_2},
\end{equation*}
and hence $x_2^{\gamma} {\rightarrow} \hat{x}_2$ in $L^\infty(\Omega,X_2)$. We obtain $x^\gamma \rightarrow \hat{x}$ in $X$.

\textit{Verification of \eqref{eq:FOC-MY-problem'-3}.}
From \eqref{eq:FOC-MY-problem-gamma-2}, we have for all $\mu \in L^\infty(\Omega,X_1)$ and all $\gamma$
\begin{equation*}
\begin{aligned}
0 &= \langle\phi_1^\gamma + e_{x_1}^*(\cdot)  \lambda_e^{\gamma}(\cdot)+i_{x_1}^*(x_1^\gamma, x_2^\gamma(\cdot);\cdot) \lambda_i^{\gamma}(\cdot)- \rho^{\gamma}, \mu \rangle_{L^1(\Omega, X_1^*),L^\infty(\Omega,X_1)} . \\
%& = \E[\langle \phi_1^{\gamma}, \mu\rangle_{X_1^*,X_1}] +  \langle e_{x_1}^*  \lambda_e^{\gamma,a}+i_{x_1}^*(x^\gamma) \lambda_i^{\gamma,a}- \rho^{\gamma,a}, \mu \rangle_{(L^\infty(\Omega, X_1))^*,L^\infty(\Omega,X_1)} 
\end{aligned}
\end{equation*} 
Notice that
\begin{itemize}
\item  
$\langle\phi_1^\gamma, \mu \rangle_{L^1(\Omega, X_1^*),L^\infty(\Omega,X_1)} = \langle\phi_1^{\gamma,a}, \mu \rangle_{(L^\infty(\Omega, X_1))^*,L^\infty(\Omega,X_1)} \rightarrow \langle \phi'_1, \mu \rangle_{(L^\infty(\Omega, X_1))^*,L^\infty(\Omega,X_1)}$ by weak* convergence of $\phi_1^\gamma$ in $(L^\infty(\Omega,X_1))^*$.
\item Using the identity for absolutely continuous functionals~\eqref{eq:absolutely-continuous-functionals} combined with \Cref{lemma:regularity-operators} we have
\begin{align*}
&\langle e_{x_1}^*(\cdot)  \lambda_e^{\gamma}(\cdot), \mu \rangle_{L^1(\Omega, X_1^*),L^\infty(\Omega,X_1)} = \langle   \lambda_e^{\gamma},  e_{x_1}(\cdot) \mu(\cdot) \rangle_{L^1(\Omega, W)),L^\infty(\Omega,W)} \\
&\quad =  \langle   \lambda_e^{\gamma,a},  e_{x_1}(\cdot) \mu(\cdot) \rangle_{(L^\infty(\Omega, W))^*,L^\infty(\Omega,W)} \\
& \rightarrow \langle  \hat{\lambda}_e,  e_{x_1}(\cdot) \mu(\cdot) \rangle_{(L^\infty(\Omega, W))^*,L^\infty(\Omega,W)} 
 = \langle e_{x_1}^*  \hat{\lambda}_e,  \mu \rangle_{(L^\infty(\Omega, X_1))^*,L^\infty(\Omega,X_1)},
\end{align*}
by weak* convergence of $\lambda_e^{\gamma,a}$ in $(L^\infty(\Omega, W))^*$.
\item From strong convergence of $(x^\gamma)_{\gamma \in J}$ and weak* convergence of $\lambda_i^{\gamma,a}$ in $(L^\infty(\Omega, R))^*$ we have  
\begin{align*}
&\langle i_{x_1}^*(x_1^\gamma, x_2^\gamma(\cdot);\cdot) \lambda_i^{\gamma}(\cdot), \mu \rangle_{L^1(\Omega, X_1^*),L^\infty(\Omega,X_1)} = \langle \lambda_i^{\gamma}(\cdot),  i_{x_1}(x_1^\gamma, x_2^\gamma(\cdot);\cdot)\mu(\cdot) \rangle_{L^1(\Omega, R^*),L^\infty(\Omega,R)}\\
&\quad= \langle \lambda_i^{\gamma,a},  i_{x_1}(x_1^\gamma, x_2^\gamma(\cdot);\cdot)\mu(\cdot) \rangle_{(L^\infty(\Omega, R))^*,L^\infty(\Omega,R)}\\
&\rightarrow  \langle \hat{\lambda}_i,  i_{x_1}(\hat{x}_1, \hat{x}_2(\cdot);\cdot)\mu(\cdot) \rangle_{(L^\infty(\Omega, R))^*,L^\infty(\Omega,R)} =  \langle  i^*_{x_1}(\hat{x}_1, \hat{x}_2(\cdot);\cdot) \hat{\lambda}_i, \mu \rangle_{(L^\infty(\Omega, X_1))^*,L^\infty(\Omega,X_1)},
\end{align*}
where we used the continuity of the partial derivative afforded by \Cref{assumption:example-problem-MY-ii}.
\item $\langle \rho^{\gamma}, \mu \rangle_{L^1(\Omega, X_1^*),L^\infty(\Omega,X_1)} = \langle\rho^{\gamma,a}, \mu \rangle_{(L^\infty(\Omega, X_1))^*,L^\infty(\Omega,X_1)} \rightarrow \langle \rho^{\gamma,a}, \mu \rangle_{(L^\infty(\Omega, X_1))^*,L^\infty(\Omega,X_1)} $ by weak* convergence of $\rho^{\gamma,a}$ in $(L^\infty(\Omega,X_1))^*$.
\end{itemize}

Thus 
\begin{equation*}
0= \langle  \phi'_1 + e_{x_1}^* {\lambda}'_e+i_{x_1}^*(\hat{x}){\lambda}'_i- {\rho}', \mu  \rangle_{(L^\infty(\Omega,X_1))^*,L^\infty(\Omega,X_1)}.
\end{equation*}

The sets $\mathcal{A}_1$ and $\mathcal{S}_1$ denote the sets of absolutely continuous functionals and singular functionals, respectively, defined on $L^\infty(\Omega,X_1)$.  By \Cref{lemma:regularity-operators}, it follows that
\begin{equation*}
\underbrace{(\hat{\phi}_1^\circ +e_{x_1}^* \hat{\lambda}^\circ_e+i_{x_1}^*(\hat{x})\hat{\lambda}^\circ_i- \hat{\rho}^\circ)}_{\in \mathcal{S}_1} + \underbrace{(\hat{\phi}_1^a + e_{x_1}^* \hat{\lambda}^a_e+i_{x_1}^*(\hat{x})\hat{\lambda}^a_i- \hat{\rho}^a)}_{\in \mathcal{A}_1\cong L^1(\Omega,X_1^*)} = 0.
\end{equation*}
The spaces $\mathcal{A}_1$ and $\mathcal{S}_1$ are complementary spaces, so $\mathcal{A}_1 \cap \mathcal{S}_1 = \{0\}$. This implies
\begin{align}
\label{eq:singular-equation-rho-circ}
\hat{\phi}_1^\circ+ e_{x_1}^* \hat{\lambda}^\circ_e+i_{x_1}^*(\hat{x})\hat{\lambda}^\circ_i- \hat{\rho}^\circ &=0,\\
\label{eq:absolutely-continuous-equation-rho-circ}
\hat{\phi}_1^a + e_{x_1}^* \hat{\lambda}^a_e+i_{x_1}^*(\hat{x})\hat{\lambda}^a_i- \hat{\rho}^a&=0. 
\end{align}
The expression \eqref{eq:absolutely-continuous-equation-rho-circ} immediately implies that \eqref{eq:FOC-MY-problem'-3} is satisfied.

\textit{Verification of  \eqref{eq:FOC-MY-problem'-2} and \eqref{eq:FOC-MY-problem'-4}.} Analogously to the prior step,  we have from \eqref{eq:FOC-MY-problem-gamma-3} for all $\nu \in L^\infty(\Omega,X_2)$ and all $\gamma$
\begin{equation*}
\begin{aligned}
0 &=  \langle \phi_2^\gamma+ e_{x_2}^*(\cdot) \lambda_e^\gamma(\cdot) +i_{x_2}^*(x_1^\gamma, x_2^\gamma(\cdot);\cdot) \lambda_i^\gamma(\cdot),\nu\rangle_{L^1(\Omega,X_2^*),L^\infty(\Omega,X_2)}.
%&= \E[\langle \phi_2^\gamma, \nu \rangle_{X_2^*,X_2}] + \langle e_{x_2}^* \lambda_e^{\gamma,a} +i_{x_2}^*(x^\gamma) \lambda_i^{\gamma,a},\nu\rangle_{(L^\infty(\Omega,X_2))^*,L^\infty(\Omega,X_2)}.
\end{aligned}
\end{equation*}
Hence 
\begin{equation*}
0=  \langle \phi_2'+ e_{x_2}^* \lambda'_e +i_{x_2}^*(\hat{x}) {\lambda}'_i,\nu\rangle_{(L^\infty(\Omega,X_2))^*,L^\infty(\Omega,X_2)}.
\end{equation*}
Grouping by singular and absolutely continuous terms, we obtain
 \eqref{eq:FOC-MY-problem'-2} and \eqref{eq:FOC-MY-problem'-4}.

\textit{Verification of \eqref{eq:FOC-MY-problem'-1} and  \eqref{eq:FOC-MY-problem'-5}.}
The expression \eqref{eq:FOC-MY-problem'-5} follows from \Cref{corollary:affine-maps-are-weak-star-closed} and the fact that $C$ is closed.
From \eqref{eq:FOC-MY-problem-gamma-1}, we have for all $\mu \in X_1$
\begin{equation*}
\begin{aligned}
0 &= \langle \E[\rho^\gamma] +\eta^\gamma +\xi^\gamma, \mu \rangle_{X_1^*,X_1}\\
& = \langle \rho^{\gamma,a}, \mu\rangle_{(L^\infty(\Omega,X_1))^*,L^\infty(\Omega,X_1)} +\langle \eta^\gamma +\xi^\gamma, \mu \rangle_{X_1^*,X_1}.
\end{aligned}
\end{equation*}
After plugging in \eqref{eq:singular-equation-rho-circ}, the accumulation point therefore satisfies
\begin{equation*}
\begin{aligned}
0&= \langle  \hat{\rho}^a +\hat{\rho}^\circ, \mu \rangle_{(L^\infty(\Omega,X_1))^*,L^\infty(\Omega,X_1)} +\langle\hat{\eta} +\hat{\xi}, \mu  \rangle_{X_1^*,X_1}\\
&= \langle \E[\hat{\rho}] +\hat{\phi}_1^\circ+e_{x_1}^*\hat{\lambda}_e^\circ + i^*_{x_1}(\hat{x})\hat{\lambda}_i^\circ +\hat{\eta} +\hat{\xi}, \mu  \rangle_{X_1^*,X_1}
\end{aligned}
\end{equation*}
 for all $\mu \in X_1$, which is equivalent to \eqref{eq:FOC-MY-problem'-1}.

\textit{Verification of \eqref{eq:FOC-MY-problem'-6} and \eqref{eq:FOC-MY-problem'-7}.}
We proved $i(\hat{x}_1, \hat{x}_2(\omega);\omega) \leq_K 0$ in \Cref{lemma:MY-basic-convergence}.
For all $\gamma$, $\lambda_i^\gamma(\omega) \in K_H^\oplus$.
It follows for all $r \in L^\infty(\Omega,R)$ satisfying $r(\omega) \in K_H \cap R = K$ that 
\begin{equation*}
0 \leq \E[(\lambda_i^\gamma, r)_H] =\E[\langle \lambda_i^{\gamma}, r\rangle_{R^*,R}] = \langle  \lambda_i^{\gamma,a}, r \rangle_{(L^\infty(\Omega,R))^*,L^\infty(\Omega,R)}.
\end{equation*}
Therefore
\begin{equation*}
0 \leq \langle \hat{\lambda}_i^a + \hat{\lambda}_i^\circ, r  \rangle_{(L^\infty(\Omega,R))^*,L^\infty(\Omega,R)}.
\end{equation*}
Since $\hat{\lambda}_i^a$ and $\hat{\lambda}_i^\circ$ belong to complementary subspaces, we conclude 
\begin{equation}
\label{eq:lagrange-multipliers-in-dual-cone}
\hat{\lambda}_i(\omega) \in K^\oplus \text{ a.s.} \quad \text{and} \quad  \hat{\lambda}_i^\circ \in \mathcal{K}^\oplus.
\end{equation}

By weak* convergence of $\lambda_i^\gamma$, strong convergence of $x^\gamma$, and continuity of $x \mapsto i(x_1, x_2(\cdot);\cdot)$, we have 
\begin{equation}
\label{eq:inequality-lagrange-multipliers}
\begin{aligned}
\E[(\lambda_i^\gamma, -i(x_1^\gamma, x_2^\gamma(\cdot); \cdot))_H] 
%= \E[\langle \lambda_i^\gamma, -i(x_1^\gamma, x_2^\gamma(\cdot); \cdot)\rangle_{R^*,R}]
 = \langle \lambda_i^{\gamma,a}, -i(x_1^\gamma, x_2^\gamma(\cdot); \cdot)\rangle_{(L^\infty(\Omega,R))^*,L^\infty(\Omega,R)}\\ \rightarrow \langle \lambda'_i, -i(\hat{x}_1, \hat{x}_2(\cdot); \cdot)\rangle_{(L^\infty(\Omega,R))^*,L^\infty(\Omega,R)} \geq 0.
\end{aligned}
\end{equation}
Notice that for all $\gamma$,
\begin{align*}
\E[(\lambda_i^\gamma, -i(x^\gamma))_H] &= \gamma \E[ \big(i(x^\gamma)+\pi_{K_H}(-i(x^\gamma)), -i(x^\gamma)\big)_H]\\
&=-\gamma \E[\lVert i(x^\gamma) \rVert_H^2] + \gamma \E[ \big(\pi_{K_H}(-i(x^\gamma)) - \pi_{K_H}(0), -i(x^\gamma)\big)_H] \\
&\leq -\gamma \E[\lVert i(x^\gamma) \rVert_H^2]  + \gamma \E[\lVert \pi_{K_H}(-i(x^\gamma)) - \pi_{K_H}(0)\rVert_H \lVert i(x^\gamma)\rVert_H]\\
&\leq -\gamma \E[\lVert i(x^\gamma) \rVert_H^2]+\gamma \E[\lVert i(x^\gamma) \rVert_H^2] = 0, 
\end{align*}
where we used monotonicity of the expectation operator, nonexpansivity of the projection operator, as well as $0 \in K_{H}$. This combined with \eqref{eq:inequality-lagrange-multipliers} yields
\begin{align*}
\langle \hat{\lambda}_i(\omega), i(\hat{x}_1,\hat{x}_2(\omega);\omega)\rangle_{R^*,R} &= 0, \quad \text{and} \quad \langle \hat{\lambda}_i^\circ, i(\hat{x}) \rangle_{(L^\infty(\Omega,R))^*, L^\infty(\Omega,R)} = 0.
\end{align*}

\textit{Sequential statements.}
We note that $X_1$ and $L^\infty(\Omega,X_2)$ are metrizable spaces. Note that $X_1$ is reflexive. From the subnet $(x^\gamma, \eta^\gamma, \xi^\gamma)_{\gamma \in J}$, it is therefore possible to extract a subsequence $\gamma_n \rightarrow \infty$ such that $x^{\gamma_n} \rightarrow \hat{x}$, $\eta^{\gamma_n} \rightharpoonup \hat{\eta}$, and
$ \xi^{\gamma_n} \rightharpoonup \hat{\xi}$. 

\textit{Convergence of elements from subdifferentials.}
Strong convergence  of $x^{\gamma_n}$ and weak convergence of $\xi^{\gamma_n} \in N_C(x_1^{\gamma_n})$ imply $\hat{\xi} \in N_C(\hat{x}_1).$ By the same reasoning, $\hat{\eta} \in \partial J_1(\hat{x}_1)$.
We now reveal the structure of $\phi^{\gamma_n} = \vartheta^{\gamma_n} \zeta^{\gamma_n}$, where $\vartheta^{\gamma_n} \in \partial \cR[J_2(x^{\gamma_n})]$ and $\zeta^{\gamma_n}(\omega) \in \partial J_2(x_1^{\gamma_n}, x_2^{\gamma_n}(\omega);\omega)$ a.s.
Clearly, $\vartheta^{\gamma_n}$ is bounded in $L^{p'}(\Omega)$ due to the boundedness of $x^{\gamma_n}$, hence on a subsequence (we use the same labeling) there exists a $\hat{\vartheta} \in L^{p'}(\Omega)$ such that $\vartheta^{\gamma_{n}}  \rightharpoonup^* \hat{\vartheta}.$ Therefore, strong convergence of $x^{\gamma_n}$ to $\hat{x}$ implies $\hat{\vartheta} \in \partial \cR[J_2(\hat{x})]$. Moreover, for almost every $\omega$, the sequence $\zeta^{\gamma_n}(\omega)$ is bounded in $X_1^* \times X_2^*$ due to the boundedness of $x^{\gamma_n}$, hence there exists a subsequence $\{ n_k\}$ and a limit point $\hat{\zeta}(\omega)$ such that $\zeta^{\gamma_{n_k}}(\omega) \rightharpoonup \hat{\zeta}(\omega).$
From the convergence $x^{\gamma_n} \rightarrow \hat{x}$, we have on a further subsequence (with the same labeling) the pointwise convergence $x^{\gamma_{n_k}}(\omega) \rightarrow \hat{x}(\omega)$ a.s.~and hence $\hat{\zeta}(\omega) \in \partial J_2(\hat{x}_1,\hat{x}_2(\omega);\omega)$ a.s. The fact that $\hat{\zeta} \in L^p(\Omega, X_1^*) \times L^p(\Omega,X_2^*)$ follows by \Cref{rem:weak-compactness-subdifferential-J2}.
\end{proof}

The system \eqref{eq:FOC-MY-problem-limit} in \Cref{thm:optimality-of-limit-primal-dual} is not necessary and sufficient for optimality since it is not generally guaranteed that the sequence given by $\phi^{\gamma_n} = \vartheta^{\gamma_n} \zeta^{\gamma_n}$ has a limit of the form $\hat{\vartheta} \hat{\zeta}$ as required by \eqref{eq:FOC-MY-problem-3}--\eqref{eq:FOC-MY-problem-4}.  Moreover, the system \eqref{eq:FOC-MY-problem-limit} contains the singular terms $\hat{\phi}^\circ$. However, there are two relevant cases in which it is possible to show the stronger result.

\begin{corollary}
\label{cor:expectation-necessary-sufficient}
With the same assumptions as in \Cref{thm:optimality-of-limit-primal-dual}, assume additionally that $\cR = \E$. Then $\hat{\phi} = \hat{\zeta}$, $\hat{\phi}^\circ \equiv 0$, and the system \eqref{eq:FOC-MY-problem-limit} is necessary and sufficient for optimality.
\end{corollary}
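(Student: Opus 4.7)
The plan is to exploit $\cR = \E$ first to pin down the subgradients of $\cR$, then to combine uniform pointwise domination of $\zeta^\gamma$ with dominated convergence in order to eliminate the singular part of the limit. A direct computation shows $\cR^*[\vartheta] = 0$ when $\vartheta \equiv 1$ a.s.~and $+\infty$ otherwise, so $\partial \cR[\xi] = \{1\}$ for every $\xi \in L^p(\Omega)$. Hence $\vartheta^\gamma \equiv 1$ in the regularized KKT system of \Cref{lemma:optimality-regular-problem}, $\hat{\vartheta} \equiv 1$ in \Cref{thm:optimality-of-limit-primal-dual}, and $\phi^\gamma = \zeta^\gamma$ for every $\gamma$. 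It remains to identify the weak* limit $\phi'$ as the absolutely continuous functional corresponding to $\hat{\zeta}$.

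To set up the identification, I would invoke the pointwise subdifferential bound from \Cref{rem:weak-compactness-subdifferential-J2}: since $\{x^\gamma\}$ is bounded by some $r>0$ in $X$ by \Cref{prop:primal-dual-path-bounded}, one obtains $\lVert \zeta^\gamma(\omega)\rVert_{X_1^* \times X_2^*} \leq 2 a_r(\omega)$ a.s.~for a fixed $a_r \in L^p(\Omega) \subset L^1(\Omega)$, uniformly in $\gamma$. This uniform $L^1$-domination is the technical workhorse of the argument; securing it is the main obstacle, since without it one cannot rule out that part of the weak* mass of $\phi^{\gamma,a}$ escapes into a singular component of the Ioffe--Levin decomposition. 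Note that for a general $\cR$ the product $\zeta^\gamma \vartheta^\gamma$ need not inherit such a clean pointwise $L^1$-bound, which is precisely why the stronger conclusion depends on risk neutrality.

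With the domination in hand, combine it with the a.s.~pointwise convergence $\zeta^{\gamma_{n_k}}(\omega) \rightharpoonup \hat{\zeta}(\omega)$ in $X_1^* \times X_2^*$ furnished by \Cref{thm:optimality-of-limit-primal-dual}. For any $\mu \in L^\infty(\Omega, X_1 \times X_2)$, the scalar integrand $\langle \zeta^{\gamma_{n_k}}(\omega), \mu(\omega)\rangle$ converges pointwise a.s.~to $\langle \hat{\zeta}(\omega), \mu(\omega)\rangle$ and is dominated by $2 a_r(\omega)\lVert \mu\rVert_{L^\infty}$. Dominated convergence then yields $\int_\Omega \langle \zeta^{\gamma_{n_k}}, \mu\rangle \D\pP \to \int_\Omega \langle \hat{\zeta}, \mu\rangle \D\pP$. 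The left-hand side equals $\langle \phi^{\gamma_{n_k}, a}, \mu\rangle$, and because the sequence $\{\gamma_{n_k}\}$ is cofinal in the subnet $J$, it inherits the weak* limit $\langle \phi', \mu\rangle$. Equating the two limits for all test functions $\mu$ shows that $\phi'$ is absolutely continuous and agrees with $\hat{\zeta}$; uniqueness in \Cref{thm:ioffe-levin} then gives $\hat{\phi} = \hat{\zeta}$ and $\hat{\phi}^\circ \equiv 0$.

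To conclude, substituting $\hat{\vartheta} \equiv 1$, $\hat{\phi} = \hat{\zeta}$, and $\hat{\phi}^\circ \equiv 0$ into \eqref{eq:FOC-MY-problem'-1}--\eqref{eq:FOC-MY-problem'-7} reduces \eqref{eq:FOC-MY-problem-limit} verbatim to the system \eqref{eq:FOC-MY-problem-1}--\eqref{eq:FOC-MY-problem-7} of \Cref{lemma:KKT-general}, which is necessary and sufficient for $(\hat{x}, \hat{\lambda}, \hat{\lambda}^\circ)$ to be a saddle point of $\bar{L}$, equivalently for $\hat{x}$ to solve \eqref{eq:model-problem-abstract-long}. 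Thus \eqref{eq:FOC-MY-problem-limit} itself characterizes optimality in both directions, completing the corollary.
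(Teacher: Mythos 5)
Your proposal is correct, and its skeleton (force $\vartheta^\gamma\equiv\hat\vartheta\equiv 1$, show $\phi'$ is absolutely continuous, then invoke \Cref{lemma:KKT-general}) matches the paper's. The middle step, however, is carried out by a genuinely different mechanism. The paper argues that $\phi^{\gamma_n}=\zeta^{\gamma_n}$ belongs (after identification) to $\partial\E[J_2(x^{\gamma_n})]$, passes to the limit using strong convergence of $x^{\gamma_n}$ and weak* closedness of the subdifferential graph to get $\phi'\in\partial\E[J_2(\hat{x})]$, and then kills the singular part by the structural result \Cref{cor:characterization-subdifferentials}, which says subgradients of such integral functionals have no singular component. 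You instead identify the weak* limit by hand: the uniform pointwise bound $\lVert\zeta^\gamma(\omega)\rVert\le 2a_r(\omega)$ from \Cref{rem:weak-compactness-subdifferential-J2} (valid uniformly in $\gamma$ since $\mathcal{C}_0$ is bounded by \Cref{prop:primal-dual-path-bounded}), the a.s.\ pointwise weak convergence $\zeta^{\gamma_n}(\omega)\rightharpoonup\hat\zeta(\omega)$ supplied by \Cref{thm:optimality-of-limit-primal-dual}, and dominated convergence give $\langle\phi^{\gamma_{n_k},a},\mu\rangle\to\int_\Omega\langle\hat\zeta,\mu\rangle\D\pP$ for every $\mu\in L^\infty(\Omega,X_1\times X_2)$; since the sequence $\{\gamma_{n_k}\}\subset J$ tends to infinity it is a subnet, so this limit must coincide with $\langle\phi',\mu\rangle$, and uniqueness in \Cref{thm:ioffe-levin} yields $\hat\phi=\hat\zeta$, $\hat\phi^\circ\equiv 0$. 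Your route is slightly longer but more explicit: it pins down $\hat\phi$ as literally equal to the $\hat\zeta$ from the theorem rather than merely as some element of $\partial\E[J_2(\hat x)]$, and it makes transparent why the argument breaks for general $\cR$ (the product $\zeta^\gamma\vartheta^\gamma$ loses the clean $L^1$ domination). The paper's route is shorter because \Cref{cor:characterization-subdifferentials} does the work of excluding singular mass in one stroke. Both are sound.
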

\begin{proof}
In the case where $\cR = \E$, we have $\vartheta^\gamma =  \hat{\vartheta} \equiv 1$ for all $\gamma \geq 0$. Hence from $\hat{\zeta}(\omega) \in \partial J_2(\hat{x}_1,\hat{x}_2(\omega);\omega)$ a.s.~and $\phi^{\gamma_n} = \zeta^{\gamma_n}$ (after identification) belongs to $\partial \E[ J_2(x^{\gamma_n})],$ which using strong convergence of $x^{\gamma_n}$ implies that $\hat{\phi} \in \partial \E[J_2(\hat{x})]$. From \Cref{cor:characterization-subdifferentials}, we obtain that $\hat{\phi}^\circ \equiv 0$. By \Cref{lemma:KKT-general}, the conditions posed by \eqref{eq:FOC-MY-problem-limit} are now necessary and sufficient.
\end{proof}

\begin{corollary}
\label{cor:smooth-J2-necessary-sufficient}
With the same assumptions as in \Cref{thm:optimality-of-limit-primal-dual}, assume additionally that $J_2(\cdot,\cdot;\omega)$ is continuously Fr\'echet differentiable on $X_1 \times X_2$ with derivative $D J_2(\cdot,\cdot;\omega)$ for almost every $\omega$; for every $r>0$ there exists $a'_{r} \in L^p(\Omega)$ such that $D J_2(x_1,x_2;\omega) \leq a'_{r}(\omega)$ a.s. for $\lVert x_1\rVert_{X_1} + \lVert x_2\rVert_{X_2} \leq r.$ Then $\hat{\phi} = \hat{\vartheta}\hat{\zeta}$, $\hat{\phi}^\circ \equiv 0$, and the system \eqref{eq:FOC-MY-problem-limit} is necessary and sufficient for optimality.
\end{corollary}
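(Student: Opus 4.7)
The plan is to exploit the continuous Fréchet differentiability of $J_2$ to upgrade the pointwise weak convergence of $\zeta^{\gamma_n}(\omega)$ obtained in \Cref{thm:optimality-of-limit-primal-dual} to strong convergence in $L^p(\Omega, X_1^*\times X_2^*)$, and then to pair this with the weak* convergence of $\vartheta^{\gamma_n}$ in $L^{p'}(\Omega)$. This will identify the weak* limit $\phi'$ of $\phi^{\gamma_n,a}$ with the absolutely continuous functional corresponding to $\hat{\vartheta}\hat{\zeta}\in L^1(\Omega,X_1^*\times X_2^*)$, which forces $\hat{\phi}^\circ\equiv 0$ by uniqueness of the Yosida--Hewitt decomposition in \Cref{thm:ioffe-levin}. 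Since $J_2(\cdot,\cdot;\omega)$ is smooth, $\partial J_2$ is single-valued and $\zeta^{\gamma_n}(\omega) = DJ_2(x_1^{\gamma_n},x_2^{\gamma_n}(\omega);\omega)$; passing to a further subsequence of $\{x^{\gamma_n}\}$ we obtain $x^{\gamma_n}(\omega) \to \hat{x}(\omega)$ a.s., and continuity of $DJ_2$ then yields $\zeta^{\gamma_n}(\omega) \to \hat{\zeta}(\omega) = DJ_2(\hat{x}_1,\hat{x}_2(\omega);\omega)$ a.s.

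The uniform growth bound on $DJ_2$ combined with the $L^\infty$-bound on $\{x_2^{\gamma_n}\}$ gives $\lVert\zeta^{\gamma_n}(\omega)\rVert_{X_1^*\times X_2^*} \le a'_r(\omega)$ a.s. with $a'_r \in L^p(\Omega)$ independent of $n$, so the Bochner version of the dominated convergence theorem yields $\zeta^{\gamma_n} \to \hat{\zeta}$ strongly in $L^p(\Omega,X_1^*\times X_2^*)$. For any test function $\mu \in L^\infty(\Omega,X_1\times X_2)$, this implies $\langle \zeta^{\gamma_n}(\cdot),\mu(\cdot)\rangle \to \langle \hat{\zeta}(\cdot),\mu(\cdot)\rangle$ strongly in $L^p(\Omega)$, and coupling with $\vartheta^{\gamma_n} \rightharpoonup^* \hat{\vartheta}$ in $L^{p'}(\Omega)$ via the standard weak--strong duality argument gives
\begin{equation*}
\int_\Omega \vartheta^{\gamma_n}(\omega)\langle \zeta^{\gamma_n}(\omega),\mu(\omega)\rangle \D \pP(\omega) \to \int_\Omega \hat{\vartheta}(\omega)\langle \hat{\zeta}(\omega),\mu(\omega)\rangle \D \pP(\omega).
\end{equation*}
This is precisely $\phi^{\gamma_n,a} \rightharpoonup^* \hat{\vartheta}\hat{\zeta}$ in $(L^\infty(\Omega,X_1\times X_2))^*$ (identifying $\hat{\vartheta}\hat{\zeta}\in L^1$ with its absolutely continuous functional). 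Since the same subnet also satisfies $\phi^{\gamma_n,a}\rightharpoonup^* \phi' = \hat{\phi}^a + \hat{\phi}^\circ$, uniqueness of the weak* limit together with the uniqueness of the decomposition in \Cref{thm:ioffe-levin} forces $\hat{\phi}^\circ \equiv 0$ and $\hat{\phi} = \hat{\vartheta}\hat{\zeta}$.

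With $\hat{\phi}^\circ = 0$ and $\hat{\phi} = \hat{\vartheta}\hat{\zeta}$, the limit system \eqref{eq:FOC-MY-problem-limit} collapses exactly to the conditions of \Cref{lemma:KKT-general}, which characterize saddle points of the extended Lagrangian $\bar{L}$ and are therefore both necessary and sufficient for optimality of $\hat{x}$ in \eqref{eq:model-problem-abstract-long}. The main technical hurdle is the strong $L^p$-convergence of $\zeta^{\gamma_n}$: without smoothness of $J_2$ one would only be guaranteed weak pointwise convergence of $\zeta^{\gamma_n}(\omega)$ within the multivalued subdifferential $\partial J_2(\hat{x}_1,\hat{x}_2(\omega);\omega)$, and the product $\vartheta^{\gamma_n}\zeta^{\gamma_n}$ of two merely weakly convergent sequences could not be passed to the limit in the way needed to exclude singular parts.
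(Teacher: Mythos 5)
Your proposal is correct and follows essentially the same route as the paper: single-valuedness of $\partial J_2$ under Fr\'echet differentiability, pointwise a.s.\ convergence of $\zeta^{\gamma_n}(\omega)$ by continuity of $DJ_2$, dominated convergence to get strong $L^p$-convergence of $\zeta^{\gamma_n}$, and then the standard strong--weak* splitting of the product $\vartheta^{\gamma_n}\zeta^{\gamma_n}$ to identify the limit $\hat{\vartheta}\hat{\zeta}\in L^1$ and kill the singular part. The only cosmetic difference is that you phrase the final identification via uniqueness of the Yosida--Hewitt decomposition of the weak* limit in $(L^\infty)^*$, while the paper tests directly against $v\in\mathcal{Y}^*$ and concludes $\hat{\phi}=\hat{\vartheta}\hat{\zeta}\in\mathcal{Y}$; these are equivalent.
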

\begin{proof}
Fr\'echet differentiability of $J_2$ implies the subdifferential is a singleton, i.e., $\partial J_2(x_1, x_2;\omega) = \{ D J_2(x_1,x_2;\omega)\}$. From strong convergence $x^{\gamma_n} \rightarrow \hat{x}$ in $X$ one has (potentially on a subsequence, where we use the same labeling) $(x_1^{\gamma_n},x_2^{\gamma_n}(\omega)) \rightarrow (\hat{x}_1,\hat{x}_2(\omega))$ a.s. In particular it follows by continuity of the derivative that
\begin{align*}
\zeta^{\gamma_n}(\omega) = DJ_2(x_1^{\gamma_n}, x_2^{\gamma_n}(\omega);\omega) \rightarrow DJ_2(\hat{x}_1, \hat{x}_2(\omega); \omega) = \hat{\zeta}(\omega) \quad \text{in $X_1^* \times X_2^*$ a.s.}
\end{align*}
Together with the growth condition and Lebesgue's Dominated Convergence theorem, we have strong convergence $\zeta^{\gamma_n} \rightarrow \hat{\zeta}$ in $L^p(\Omega, X_1^*) \times L^p(\Omega, X_2^*)$. We now prove that $\phi^{\gamma_n} \rightharpoonup \hat{\phi} = \hat{\vartheta} \hat{\zeta} \in\mathcal{Y}:=L^1(\Omega,X_1^*) \times L^1(\Omega, X_2^*)$. Naturally, we have
\begin{equation}
\label{eq:split-differential}
\langle v, \phi^{\gamma_n}\rangle_{\mathcal{Y}^*,\mathcal{Y}} = \langle v, \vartheta^{\gamma_n}(\zeta^{\gamma_n}- \hat{\zeta})\rangle_{\mathcal{Y}^*,\mathcal{Y}}  + \langle v, \vartheta^{\gamma_n} \hat{\zeta}\rangle_{\mathcal{Y}^*,\mathcal{Y}} \quad \forall v \in \mathcal{Y}^*.
\end{equation}
Notice that 
\begin{align*}
\langle v, \vartheta^{\gamma_n}(\zeta^{\gamma_n}- \hat{\zeta})\rangle_{\mathcal{Y}^*,\mathcal{Y}} \leq \lVert v\rVert_{\mathcal{Y}^*} (\lVert \vartheta^{\gamma_n} (\zeta_1^{\gamma_n}- \hat{\zeta}_1)\rVert_{L^1(\Omega,X_1^*)} + \lVert \vartheta^{\gamma_n} (\zeta_2^{\gamma_n}- \hat{\zeta}_2)\rVert_{L^1(\Omega,X_2^*)} )
\end{align*}
and for $i=1,2$,
\begin{align*}
\lVert \vartheta^{\gamma_n} (\zeta_i^{\gamma_n}- \hat{\zeta}_i)\rVert_{L^1(\Omega,X_i^*)} &= \int_{\Omega} \lVert \vartheta^{\gamma_n}(\omega) (\zeta_i^{\gamma_n}(\omega)- \hat{\zeta}_i(\omega))\rVert_{X_i^*} \D \pP(\omega) \\
& =  \int_{\Omega} |\vartheta^{\gamma_n}(\omega)| \lVert  (\zeta_i^{\gamma_n}(\omega)- \hat{\zeta}_i(\omega))\rVert_{X_i^*} \D \pP(\omega) \\
&\leq \lVert \vartheta^{\gamma_n}\rVert_{L^{p'}(\Omega)} \lVert \zeta_i^{\gamma_n}- \hat{\zeta}_i\rVert_{L^p(\Omega,X_i^*)},
\end{align*}
where we used H\"older's inequality in the last step.
Combining this with strong convergence $\zeta^{\gamma_n} \rightarrow \hat{\zeta}$ and boundedness of $\vartheta^{\gamma_n}$, the first term on the right hand side of \eqref{eq:split-differential} disappears in the limit. For the second term on the right hand side of  \eqref{eq:split-differential}, we have
\begin{align*}
\langle v, \vartheta^{\gamma_n} \hat{\zeta} \rangle_{\mathcal{Y}^*,\mathcal{Y}} &= \sum_{i=1}^2 \int_{\Omega} \langle v_i(\omega), \vartheta^{\gamma_n}(\omega) \hat{\zeta}(\omega)\rangle_{X_i,X_i^*} \D \pP(\omega) \\
&= \sum_{i=1}^2 \int_{\Omega} \vartheta^{\gamma_n}(\omega) \langle v_i(\omega), \hat{\zeta}(\omega)\rangle_{X_i,X_i^*} \D \pP(\omega).
\end{align*}
Notice that $\langle v_i(\cdot), \hat{\zeta}(\cdot)\rangle_{X_i,X_i^*} \in L^p(\Omega)$ by the regularity of $\hat{\zeta}$. Hence $\vartheta^{\gamma_n} \rightharpoonup^* \hat{\vartheta}$ gives  $\langle v, \vartheta^{\gamma_n} \hat{\zeta}\rangle_{\mathcal{Y}^*,\mathcal{Y}} \rightarrow  \langle v, \hat{\vartheta} \hat{\zeta}\rangle_{\mathcal{Y}^*,\mathcal{Y}}$ for all $v \in \mathcal{Y}^*$. Returning to \eqref{eq:split-differential}, we obtain  $\langle v, \phi^{\gamma_n}\rangle_{\mathcal{Y}^*,\mathcal{Y}} \rightarrow  \langle v, \hat{\vartheta} \hat{\zeta}\rangle_{\mathcal{Y}^*,\mathcal{Y}}$ for all $v \in \mathcal{Y}^*.$ Since $\hat{\phi} := \hat{\vartheta}\hat{\zeta} \in \mathcal{Y},$ the singular term $\hat{\phi}^\circ$ vanishes, and the system \eqref{eq:FOC-MY-problem-limit} is necessary and sufficient for optimality.
\end{proof}

To close this section, we discuss the situation where the adjoint variable $\lambda_e^\gamma$ is more regular than our strict framework would suggest. The choice of $W$ and $R$ clearly need to be compatible with the interior point condition \eqref{eq:constraint-qualification} for the original problem. However, for Problem \eqref{eq:model-PDE-UQ-state-constraints-reformulation}, the condition  \eqref{eq:constraint-qualification} is trivially fulfilled as long as $e_{x_2}$ is surjective. Without the restrictions on $W$, it is possible to gain additional regularity in the adjoint variable $\lambda_e^\gamma$ if we view the state $x_2^\gamma$ in a less regular space, a fact that can be exploited in computations. In the following lemma, we make this idea more precise.
\begin{lemma}
\label{lemma:adjoint-variable-higher-regularity}
Under the same conditions as \Cref{lemma:optimality-regular-problem}, suppose further that $X_2$ and $W$ are continuously embedded in $\tilde{X}_2$ and $\tilde{W}$, respectively. Suppose $e(\cdot,\cdot;\omega) \in \mathcal{L}(X_1\times \tilde{X}_2, \tilde{W})$ and $e_{x_2}(\omega) \in \mathcal{L}(\tilde{X}_2, \tilde{W})$ is a linear isomorphism for almost every $\omega \in \Omega$ with $e_{x_2}^{-1} \in L^\infty(\Omega,\mathcal{L}(\tilde{W},\tilde{X}_2)).$  Additionally, suppose $i(\cdot,\cdot;\omega)$ is continuously Fr\'echet differentiable on $X_1 \times \tilde{X}_2$ for almost every $\omega$ with partial derivative satisfying $i_{x_2}(x_1,x_2;\cdot) \in L^\infty(\Omega,\mathcal{L}(\tilde{X}_2,R))$. Then $\lambda_e^\gamma$ belongs to $L^1(\Omega,\tilde{W}^*)$. 
%\textcolor{red}{Missing condition on $\zeta_2$?}
\end{lemma}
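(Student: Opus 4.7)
The plan is to start from the pointwise optimality equation \eqref{eq:FOC-MY-problem-gamma-3}, which rearranges as
\[
e_{x_2}^*(\omega)\lambda_e^\gamma(\omega) \;=\; -\zeta_2^\gamma(\omega)\vartheta^\gamma(\omega) - i_{x_2}^*(x_1^\gamma,x_2^\gamma(\omega);\omega)\lambda_i^\gamma(\omega),
\]
holding $\pP$-a.s.\ in $X_2^*$. Under the new hypotheses, $e_{x_2}(\omega)\colon\tilde{X}_2\to\tilde{W}$ is a topological isomorphism whose inverse lies in $L^\infty(\Omega,\mathcal{L}(\tilde{W},\tilde{X}_2))$, so by duality $(e_{x_2}^{-1})^*(\omega)=(e_{x_2}^*(\omega))^{-1}\colon\tilde{X}_2^*\to\tilde{W}^*$ is an isomorphism whose operator norm is essentially bounded in $\omega$ (cf.\ \Cref{lemma:regularity-operators}). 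My strategy is to show that the right-hand side above belongs to $L^1(\Omega,\tilde{X}_2^*)$---which sits inside $L^1(\Omega,X_2^*)$ via the injection dual to $X_2\hookrightarrow\tilde{X}_2$---and then apply $(e_{x_2}^*)^{-1}$ to obtain $\lambda_e^\gamma\in L^1(\Omega,\tilde{W}^*)$ with the associated quantitative bound.

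For the constraint term this step is immediate: the hypothesis $i_{x_2}(x_1,x_2;\cdot)\in L^\infty(\Omega,\mathcal{L}(\tilde{X}_2,R))$ gives by taking adjoints $i_{x_2}^*(x_1,x_2;\cdot)\in L^\infty(\Omega,\mathcal{L}(R^*,\tilde{X}_2^*))$, and boundedness of $\lambda_i^\gamma$ in $L^1(\Omega,R^*)$ has already been established in \Cref{prop:primal-dual-path-bounded}, so $i_{x_2}^*(x^\gamma)\lambda_i^\gamma\in L^1(\Omega,\tilde{X}_2^*)$. For the cost term, once $\zeta_2^\gamma(\omega)\in\partial_{x_2}J_2(x_1^\gamma,x_2^\gamma(\omega);\omega)$ is interpreted as an element of $\tilde{X}_2^*$, the growth argument from \Cref{rem:weak-compactness-subdifferential-J2}, carried out over $\tilde{X}_2$ in place of $X_2$, yields $\zeta_2^\gamma\in L^p(\Omega,\tilde{X}_2^*)$; combined with $\vartheta^\gamma\in L^{p'}(\Omega)$, bounded independently of $\gamma$ as in \Cref{prop:primal-dual-path-bounded}, H\"older's inequality gives $\zeta_2^\gamma\vartheta^\gamma\in L^1(\Omega,\tilde{X}_2^*)$. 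Putting the two contributions together produces the estimate
\[
\|\lambda_e^\gamma\|_{L^1(\Omega,\tilde{W}^*)}\;\le\;\|e_{x_2}^{-1}\|_{L^\infty(\Omega,\mathcal{L}(\tilde{W},\tilde{X}_2))}\bigl(\|\zeta_2^\gamma\vartheta^\gamma\|_{L^1(\Omega,\tilde{X}_2^*)}+\|i_{x_2}(x^\gamma)\|_{L^\infty}\|\lambda_i^\gamma\|_{L^1(\Omega,R^*)}\bigr)<\infty,
\]
which is the desired conclusion.

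The main obstacle is establishing the interpretation $\zeta_2^\gamma(\omega)\in\tilde{X}_2^*$, since the explicit hypotheses of the lemma address only $e$ and $i$. In the applications catalogued in \Cref{subsection-MY-revisited} the tracking-type $J_2$ extends by continuity to the larger space $\tilde{X}_2$ (e.g.\ $L^2(D)$-tracking with $\tilde{X}_2=H_0^1(D)$ playing the role of the weaker state space), so the requirement is automatic and this is evidently the setting the authors have in mind. Absent such an extension, an alternative route is to test the identity against $w\in W$ by writing $w=e_{x_2}(\omega)z$ with $z=e_{x_2}^{-1}(\omega)w\in X_2$---the old inverse from \Cref{assumption:example-problem-MY-i} being the restriction of the new one to $W\subset\tilde{W}$---and then to bound $|\langle\lambda_e^\gamma(\omega),w\rangle_{W^*,W}|$ by $\|w\|_{\tilde{W}}$ through the dual-norm estimates for the two right-hand terms, an argument which again pivots on $\tilde{X}_2^*$-regularity of $\zeta_2^\gamma$ as its essential ingredient.
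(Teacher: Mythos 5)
Your proposal is essentially identical to the paper's proof, which consists of exactly your first display: solving \eqref{eq:FOC-MY-problem-gamma-3} for $\lambda_e^\gamma(\omega) = -e_{x_2}^{-*}(\omega)\bigl(\zeta_2^\gamma(\omega)\vartheta^\gamma(\omega) + i_{x_2}^*(x_1^\gamma,x_2^\gamma(\omega);\omega)\lambda_i^\gamma(\omega)\bigr)$ and invoking the hypotheses for integrability. The caveat you raise is legitimate: the paper's two-line proof silently assumes that $\zeta_2^\gamma(\omega)\vartheta^\gamma(\omega)$ lands in $\tilde{X}_2^*$ rather than merely in $X_2^* \supset \tilde{X}_2^*$, an extendability condition on $J_2$ that the lemma's stated hypotheses (which address only $e$ and $i$) do not supply, so your explicit discussion of this point is a genuine improvement in rigor over the paper's own argument.
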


\begin{proof}
From \eqref{eq:FOC-MY-problem-gamma-3} we immediately obtain
\begin{equation*}
 \lambda_e^\gamma(\omega) =- e_{x_2}^{-*}(\omega) (\zeta_2^\gamma(\omega)\vartheta^\gamma(\omega) +i_{x_2}^*(x_1^\gamma,x_2^\gamma(\omega);\omega) \lambda_i^\gamma(\omega) ) \in \tilde{W}^*.
\end{equation*}
The fact that $\lambda_e^\gamma \in L^1(\Omega,\tilde{W}^*)$ follows by the assumptions.
\end{proof}
 
\section{Conclusion and Discussion}
\label{sec:Conclusion}
In this paper, we presented optimality conditions for a large class of potentially risk-averse convex stochastic optimization problems. The applications in mind involve optimal control problems, where the control belongs to a Banach space and is coupled to a state, which is a vector-valued random variable. \cite{Geiersbach2020+} already presented optimality conditions under the assumption of relatively complete recourse, which -- while not satisfiable for nontrivial problems with additional state constraints -- is satisfied for a sequence of regularized problems. In this paper, we strengthened the \cite{Geiersbach2020+} results by presenting optimality conditions without this assumption, resulting in optimality conditions having singular Lagrange multipliers. Additionally, we formulated optimality conditions general enough to handle problems having risk measures in the objective.

As the second main contribution, we showed that regularized problems using a Moreau--Yosida regularization for the conical constraints are consistent with the original problem as the penalization parameter is taken to infinity. While \cite{Geiersbach2020+} already showed consistency with respect to the primal variables for a specific example, we generalize these results further and show consistency with respect to dual variables. 

The results presented here provide the framework for constructing algorithms via a penalization approach. For algorithms, it is preferred to work with sequential statements as opposed to nets. This is possible on sets of vanishingly small measure using the following result.
\begin{lemma}[\cite{Ball1989}]
\label{lemma:Ball}
Suppose $X$ is a reflexive Banach space and $\{f_n\} \subset L^1(\Omega,X)$ is a bounded sequence. Then there exists $\bar{f} \in L^1(\Omega,X)$ and a subsequence $\{f_{n_j} \}$ and nonincreasing sequence of sets $E_k \in \mathcal{F}$ with $\lim_{k \rightarrow \infty} \pP(E_k) =0$ such that
\begin{equation*}
f_{n_j} \rightharpoonup f \quad \text{in } L^1(\Omega\backslash E_k, X)
\end{equation*}
for every fixed $k$.
\end{lemma}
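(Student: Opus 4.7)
The plan is to reduce to the scalar Chacon biting lemma applied to $\omega \mapsto \lVert f_n(\omega)\rVert_X$, then invoke the Dunford--Pettis theorem in the reflexive Bochner-space setting on each set $\Omega\setminus E_k$, and finally glue the piecewise weak limits into a single $\bar f \in L^1(\Omega,X)$ by a diagonal argument. Specifically, set $g_n(\omega):=\lVert f_n(\omega)\rVert_X$, which by assumption is a bounded sequence in $L^1(\Omega)$. The classical scalar Chacon biting lemma then yields a subsequence (still denoted $f_n$) and a nonincreasing sequence $\{E_k\}\subset\mathcal F$ with $\pP(E_k)\to 0$ such that $\{g_n\}$ is uniformly integrable on $\Omega\setminus E_k$ for every fixed $k$. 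Equivalently, $\{f_n\}$ is bounded and uniformly integrable in $L^1(\Omega\setminus E_k, X)$ for each $k$.

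Next I appeal to the Dunford--Pettis theorem for Bochner spaces: for fixed $k$, the family $\{f_n\}\subset L^1(\Omega\setminus E_k, X)$ is bounded and uniformly integrable, and for every $A\in\mathcal F$ with $A\subset\Omega\setminus E_k$ the set $\{\int_A f_n\D\pP\}\subset X$ is bounded, hence weakly relatively compact by reflexivity of $X$. The three hypotheses of the vector-valued Dunford--Pettis theorem are therefore met, so $\{f_n\}$ is weakly relatively compact in $L^1(\Omega\setminus E_k, X)$. Extracting weakly convergent subsequences successively on $\Omega\setminus E_1$, then $\Omega\setminus E_2$, etc., and diagonalizing, I obtain one subsequence $\{f_{n_j}\}$ and functions $\bar f^{(k)}\in L^1(\Omega\setminus E_k, X)$ such that $f_{n_j}\rightharpoonup \bar f^{(k)}$ in $L^1(\Omega\setminus E_k, X)$ for every $k$. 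Uniqueness of weak limits forces the compatibility $\bar f^{(k+1)}|_{\Omega\setminus E_k}=\bar f^{(k)}$, since the restriction map $L^1(\Omega\setminus E_{k+1},X)\to L^1(\Omega\setminus E_k,X)$ is weakly continuous.

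Finally, I define $\bar f(\omega):=\bar f^{(k)}(\omega)$ on $\Omega\setminus E_k$ (consistent on overlaps) and extend arbitrarily on the $\pP$-null set $\bigcap_k E_k$. Measurability of $\bar f$ follows from the piecewise definition, and weak-lower semicontinuity of $f\mapsto \int_{\Omega\setminus E_k}\lVert f\rVert_X\D\pP$ together with the uniform bound $\sup_n\lVert f_n\rVert_{L^1(\Omega,X)}<\infty$ gives
\begin{equation*}
\int_{\Omega\setminus E_k}\lVert \bar f\rVert_X\D\pP \;\le\; \liminf_{j\to\infty}\int_\Omega \lVert f_{n_j}\rVert_X\D\pP \;\le\; C
\end{equation*}
uniformly in $k$; monotone convergence as $k\to\infty$ yields $\bar f\in L^1(\Omega,X)$, which is the claimed integrable global limit.

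The main obstacle is the second step, namely the correct use of the vector-valued Dunford--Pettis theorem: verifying that reflexivity of $X$ indeed upgrades uniform integrability plus boundedness to weak relative compactness in $L^1(\Omega\setminus E_k,X)$, in a form compatible with the diagonal-and-gluing procedure. All remaining ingredients — the scalar biting lemma, the diagonal extraction, the consistency of piecewise limits and the final Fatou/monotone-convergence estimate — are largely routine provided this core weak-compactness statement is available.
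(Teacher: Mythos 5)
The paper offers no proof of this lemma at all --- it is imported verbatim from the cited reference --- so your attempt must stand on its own. Your skeleton is in fact the standard route for the vector-valued biting lemma: scalar biting lemma applied to $\omega\mapsto\lVert f_n(\omega)\rVert_X$, weak sequential compactness of the bitten subsequence in each $L^1(\Omega\setminus E_k,X)$, then a diagonal extraction and gluing. Steps one and three are fine as written: the uniform-integrability form of the scalar biting lemma is correct, the restriction maps are bounded and hence weakly continuous so the piecewise limits are consistent, and the weak lower semicontinuity of the $L^1$-norm plus monotone convergence does give $\bar f\in L^1(\Omega,X)$.

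The gap is exactly where you flag it, and it is a genuine one. The ``vector-valued Dunford--Pettis theorem'' you invoke --- boundedness, uniform integrability, and weak relative compactness of $\{\int_A f_n\D\pP\}$ for every $A$ --- is not a valid sufficiency criterion in general: those three conditions are necessary for relative weak compactness in $L^1(\mu,X)$, but the correct characterization (\"Ulger; Diestel--Ruess--Schachermayer) replaces the indefinite-integral condition by almost-everywhere relative weak compactness of the values of suitable convex combinations, and for reflexive $X$ the indefinite-integral condition is vacuous anyway, so it buys you nothing. Note also that you cannot argue pointwise: boundedness and uniform integrability of $\lVert f_n\rVert_X$ in $L^1$ do not make $\{f_n(\omega)\}_n$ bounded in $X$ for a.e.\ $\omega$ (moving bumps of slowly growing height), so the older ``values in a fixed weakly compact set'' version of the theorem is also unavailable. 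The statement you actually need --- for reflexive $X$, bounded uniformly integrable subsets of $L^1(\Omega\setminus E_k,X)$ are relatively weakly sequentially compact --- is true but requires an argument. In the present separable reflexive setting the cleanest fix is truncation: set $f_n^M(\omega):=f_n(\omega)\min\{1,M/\lVert f_n(\omega)\rVert_X\}$, so that $\{f_n^M\}_n$ is bounded in the reflexive space $L^2(\Omega\setminus E_k,X)$ and hence has a weakly convergent subsequence there (and a fortiori in $L^1$, whose dual is $L^\infty(\Omega\setminus E_k,X^*)$ since $X^*$ has the Radon--Nikodym property); uniform integrability gives $\sup_n\lVert f_n-f_n^M\rVert_{L^1(\Omega\setminus E_k,X)}\to 0$ as $M\to\infty$, and a diagonal argument in $M$ followed by a three-epsilon estimate produces a weak $L^1$-limit of a subsequence of $\{f_n\}$ itself. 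With that lemma supplied, the rest of your proof goes through.
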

A drawback of \Cref{lemma:Ball} is that one does not know the sets $E_k$. Closing this gap will be the subject of future study. Additionally, it is planned to carry over our theory to the setting of generalized Nash equilibrium problems, where players' strategy sets are coupled, e.g., over a PDE and additionally physical requirements lead to a constraint on the state. 

\newpage
\appendix

\section{Essentially Bounded and Strongly Measurable Linear Operators}
\label{sec:linear-transformations}
\begin{proof}[Proof of \Cref{lemma:regularity-operators}]
(i) First, notice that for any $y \in L^\infty(\Omega,Y)$, we have
\begin{equation*}
\lVert \cA(\cdot) y(\cdot) \rVert_{L^\infty(\Omega,Z)} \leq \lVert \cA \rVert_{L^\infty(\Omega,\mathcal{L}(Y,Z))} \lVert y \rVert_{L^\infty(\Omega,Y)} < \infty.
\end{equation*}
Since the product of strongly measurable functions is again strongly measurable, $\cA(\cdot)y(\cdot)\colon \Omega \rightarrow Z$ is strongly measurable, meaning $\cA_1 \colon L^\infty(\Omega,Y) \rightarrow L^\infty(\Omega,Z), y(\cdot) \mapsto \cA(\cdot)y(\cdot)$ is well-defined and bounded. Hence the adjoint operator $\cA_1^* \colon (L^\infty(\Omega,Z))^* \rightarrow (L^\infty(\Omega,Y))^*$ is bounded. 
Now, let $w^\circ \in (L^\infty(\Omega,Z))^*$ be a singular functional, i.e., by \Cref{def:singular-functionals} there exists a sequence $\{ F_{n}\} \subset \mathcal{F}$ with $F_{n+1} \subset F_n$ such that $\pP(F_{n}) \rightarrow 0$ and $\langle w^\circ,z\rangle_{(L^\infty(\Omega,Z))^*,L^\infty(\Omega,Z)} = 0$ for all $z \in L^\infty(\Omega,Z)$ satisfying $z(\omega) = 0$ for all $\omega \in F_n$ (for some $n$). For this same sequence $\{ F_n\}$, let $y$ be an arbitrary element of $L^\infty(\Omega,Y)$ satisfying $y=0$ on $F_n$. Then linearity of $\cA(\omega)$ implies $\cA(\omega) y(\omega) = 0$ for all $\omega \in F_n$ and so with $\cA_1 y = \cA(\cdot)y(\cdot)$ we have
\begin{equation*}
 \langle \cA_1^* w^\circ, y\rangle_{(L^\infty(\Omega,Y))^*,L^\infty(\Omega,Y)} =  \langle w^\circ,\cA(\cdot) y(\cdot)\rangle_{(L^\infty(\Omega,Z))^*,L^\infty(\Omega,Z)} = 0.
\end{equation*}
Since $y$ is arbitrary, $\cA_1^*w^\circ$ is a singular element of $(L^\infty(\Omega,Y))^*$.
Finally, if $z^* \in L^1(\Omega,Z^*)$, we have 
\begin{equation*}
\lVert {\cA}^*(\cdot) z^*(\cdot) \rVert_{L^1(\Omega,Y^*)} \leq \lVert \cA^* \rVert_{L^\infty(\Omega,\mathcal{L}(Z^*,Y^*))} \lVert z^*\rVert_{L^1(\Omega,Z^*)},
\end{equation*}
meaning $\hat{\cA}_1^*\colon L^1(\Omega,Z^*) \rightarrow L^1(\Omega,Y^*), z^*(\cdot) \mapsto \cA^*(\cdot) z^*(\cdot)$ is bounded. \\
(ii) The second part of the proof follows using analogous arguments to the above.
\end{proof}

\begin{proof}[Proof of \Cref{corollary:affine-maps-are-weak-star-closed}]
(i) Let $\{y_n\}$ be a sequence in $L^\infty(\Omega,Y)$ with $y_n \rightharpoonup^* \hat{y}$. We need to show that $\cA_1^* y_n \rightharpoonup^* \cA_1^* \hat{y}$. We have for all $z^* \in L^1(\Omega, Z^*)$ and all $n$
\begin{align*}
\label{eq:basic-equality-adjoints}
 \langle z^*, \cA_1 y_n \rangle_{L^1(\Omega,Z^*), L^\infty(\Omega,Z)}   &= \E[\langle z^*(\cdot),\cA(\cdot) y_n(\cdot) \rangle_{Z^*,Z}]\\
 & = \E[\langle \cA^*(\cdot) z^*(\cdot),y_n(\cdot)\rangle_{Y^*,Y}] = \langle \hat{\cA}_1^* z^*, y_n \rangle_{L^1(\Omega,Y^*), L^\infty(\Omega,Y)}
\end{align*}
Hence as $n \rightarrow \infty$,  
\begin{equation}
\label{eq:weak-continuity-step}
\langle z^*, \cA_1 y_n \rangle_{L^1(\Omega,Z^*), L^\infty(\Omega,Z)}  \rightarrow \langle \hat{\cA}_1^* z^*, \hat{y} \rangle_{L^1(\Omega,Y^*), L^\infty(\Omega,Y)} =\langle z^*,\cA_1 \hat{y} \rangle_{L^1(\Omega,Z^*),L^\infty(\Omega,Z)}. 
\end{equation}
Therefore $\mathcal{A}_1: L^\infty(\Omega, Y) \rightarrow L^\infty(\Omega, Z)$ is weakly-to-weakly* continuous. If $y_n$ belongs to the set $E:=\{y \in L^\infty(\Omega,Y): \cA(\omega) y(\omega) = b(\omega)  \text{ a.s.}\}$ for all $n$, then \eqref{eq:weak-continuity-step} gives $\E[\langle z^*(\cdot),\cA(\cdot) \hat{y}(\cdot)\rangle_{Z^*,Z}]=\E[\langle z^*(\cdot),b(\cdot)\rangle_{Z^*,Z}]$ for all $z^* \in L^1(\Omega,Z^*)$. Thus $\cA(\omega)\hat{y} (\omega) = b(\omega)$ a.s, meaning $\hat{y}$ belongs to the set $E$, making $E$ weakly* closed.\\
(ii) The second claim follows using similar arguments after noting that $Y$ is reflexive and taking $y_n \rightharpoonup \hat{y}$.
\end{proof}

\section{Existence of Saddle Points}
\label{section:saddle-points-relatively-complete-recourse}
In this section, we adapt proofs from \cite{Geiersbach2020+} to include an objective containing the term $\cR[J_2(x)]$ instead of $\E[J_2(x)].$ Additionally, we omit the abstract constraint $x_2(\omega) \in C_2$ a.s.

We define the dual problem
\begin{equation}\tag{$\textup{D}$}
\label{eq:dual-problem}
\underset{\z \in \Lambda}{\text{maximize}} \left\lbrace g(\z) := \inf_{x \in X} L(x,\z) \right\rbrace.
\end{equation}
By basic duality, the question of the existence of saddle points is the
same as identifying those $(\bx,\bz)$ for which the minimum of
Problem~\eqref{eq:model-problem-abstract-long} and maximum of
Problem~\eqref{eq:dual-problem} is attained, i.e.,
\begin{equation*}
  \inf \textup{P} = \inf_{x \in X} \sup_{\z \in \Lambda} L(x,\z) = \sup_{\z \in \Lambda} \inf_{x \in X} L(x,\z) = \sup \textup{D}.
\end{equation*}
Let $\varphi(x,u) = j(x)$ if $x \in F_{\text{ad}, u}$, and $\varphi(x,u) = \infty$ otherwise. This function is related to the Lagrangian by the relationship $\varphi(x,u) = \sup_{\lambda \in \Lambda} \{ L(x,\lambda)-\langle u,\lambda\rangle_{U,\Lambda}\}.$ Additionally, it is related to the value function through $v(u) = \inf_{x \in X} \varphi(x,u)$. Obviously, $v(0) = \inf \textup{P}$.

We proceed by first proving weak* lower semicontinuity of $\varphi.$  %Together, \Cref{thm:minP-supD} and \Cref{thm:infP-maxD} prove that the minimum of the primal problem and the maximum of the dual problem are attained and equal to one another.

\begin{lemma}
\label{lemma:lsc-conjugate-function}
Let \Cref{assumption:general-problem} be satisfied. Then the
function $\varphi\colon X \times U \rightarrow \R \cup \{ \infty\}$ is
weakly$^*$ lower semicontinuous.
\end{lemma}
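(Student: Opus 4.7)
The plan is to decompose $\varphi(x,u) = j(x) + \delta_F(x,u)$, where $F := \{(x,u) \in X \times U : x \in F_{\text{ad}, u}\}$ is the graph of the feasibility multifunction, and verify that each summand is weakly$^*$ lower semicontinuous on $X \times U$. The functional $j(x) = J_1(x_1) + \cR[J_2(x)]$ is weakly$^*$ lsc on $X$: by \Cref{subasu:general-ii} and reflexivity of $X_1$, $J_1$ is weakly lsc, while \Cref{prop:weak-weak*-lsc} yields weak$^*$ lsc of $\cR \circ J_2$ under \Cref{subasu:general-iii} and \Cref{subasu:general-vi}. Lifting $j$ to a function on $X \times U$ (depending only on $x$) preserves weak$^*$ lsc, so the remaining task is to show that $F$ is weakly$^*$ closed.

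I will write $F = F_C \cap F_e \cap F_i$ according to the three defining conditions of $F_{\text{ad}, u}$. The set $F_C := \{(x,u) : x_1 \in C\}$ is weakly$^*$ closed because $C$ is convex and closed in the reflexive space $X_1$ (hence weakly closed), and the projection $(x,u) \mapsto x_1$ is weak$^*$-to-weak continuous. For the equality set $F_e := \{(x, u_e) : e(x_1, x_2(\omega); \omega) = u_e(\omega) \text{ a.s.}\}$, linearity of $e$ from \Cref{subasu:general-iv} together with the bound \eqref{eq:growth-condition-constraints} provides $e_{x_k} \in L^\infty(\Omega, \mathcal{L}(X_k, W))$ for $k=1,2$, so \Cref{corollary:affine-maps-are-weak-star-closed} (applied componentwise) gives weak$^*$-to-weak$^*$ continuity of the affine map $(x, u_e) \mapsto e(x_1, x_2(\cdot); \cdot) - u_e$, whose zero set is $F_e$.

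The main obstacle is the weak$^*$ closedness of $F_i := \{(x, u_i) : i(x_1, x_2(\omega); \omega) \leq_K u_i(\omega) \text{ a.s.}\}$, since $i$ is only $K$-convex. I will express $F_i$ as an intersection of weakly$^*$ closed halfspaces indexed by $\lambda_i \in L^1(\Omega, R^*)$ with $\lambda_i(\omega) \in K^\oplus$ a.s. For each such $\lambda_i$, the integrand $h(x_1, x_2; \omega) := \langle \lambda_i(\omega), i(x_1, x_2; \omega)\rangle_{R^*, R}$ is continuous in $(x_1, x_2)$ by \Cref{subasu:general-iv}, measurable in $\omega$ by \Cref{subasu:general-v}, and convex in $(x_1, x_2)$ since $\lambda_i(\omega) \in K^\oplus$ combined with $K$-convexity of $i$ makes the pairing convex; hence $h$ is a normal convex integrand. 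The growth condition \eqref{eq:growth-condition-constraints} and $\lambda_i \in L^1(\Omega, R^*)$ render $\omega \mapsto h(x_1, x_2(\omega); \omega)$ integrable for every $x \in X$, so \Cref{remark:weak-star-lsc-2} yields weak$^*$ lower semicontinuity of $I_h$ on $X$. The halfspace $G_\lambda := \{(x, u_i) : I_h(x) - \E[\langle \lambda_i, u_i\rangle_{R^*, R}] \leq 0\}$ is therefore weakly$^*$ closed.

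By the bipolar theorem for the closed convex cone $K$, a point $r \in R$ lies in $-K$ iff $\langle \lambda, r\rangle_{R^*, R} \leq 0$ for all $\lambda \in K^\oplus$. A standard measurable-selection argument promotes this pointwise characterization to $F_i = \bigcap_{\lambda_i} G_\lambda$: if $i(x_1, x_2(\omega); \omega) - u_i(\omega) \notin -K$ on a set of positive probability, one constructs via Aumann--Castaing a unit-norm measurable $K^\oplus$-valued selector that strictly separates on this set, contradicting membership in every $G_\lambda$. Since each $G_\lambda$ is weakly$^*$ closed, so is $F_i$, and the weak$^*$ closedness of $F$ together with weak$^*$ lsc of $j$ gives the claim. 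The delicate point is precisely this selection step, which is where the abstract structure of normal integrands and the dual cone $K^\oplus$ must interact cleanly.
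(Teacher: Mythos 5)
Your argument is sound in substance but follows a genuinely different route from the paper. The paper also splits off $j$ (more precisely $\varphi_1 = j + \delta_C$), but it handles the entire a.s.\ feasibility constraint at once by defining a single indicator-type integrand $\varphi_2(x_1,x_2,u;\omega)$ on $X_1\times X_2\times(W\times R)$, checking that it is a normal convex integrand minorized by $h\equiv 0$, and invoking \Cref{lemma:duality-conjugate-functions} together with the Rockafellar criterion of \Cref{remark:weak-star-lsc} (neither $I_{\varphi_2}$ nor $I_{\varphi_2^*}$ identically $+\infty$, hence they are conjugate and $I_{\varphi_2}$ is weakly* lsc). This sidesteps both of the delicate points in your proof: no separation of the cone constraint into halfspaces, no measurable selection, and no operator-valued measurability. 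Your approach is more explicit and exposes the dual-cone structure that reappears later in the KKT analysis, at the cost of two extra verifications.

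Two caveats on those verifications. First, your treatment of $F_e$ via \Cref{corollary:affine-maps-are-weak-star-closed} presupposes $e_{x_k}\in L^\infty(\Omega,\mathcal{L}(X_k,W))$, i.e.\ \emph{norm}-measurability of the operator-valued map; \Cref{subasu:general-iv}--\ref{subasu:general-v} only give measurability of $\omega\mapsto e(x_1,x_2;\omega)$ for fixed arguments (strong-operator-topology measurability), which is weaker since $\mathcal{L}(X_k,W)$ need not be separable. The paper only assumes the stronger structure later, in \Cref{assumption:example-problem-MY}. The fix is to treat $F_e$ exactly as you treat $F_i$: for each $\lambda_e\in L^1(\Omega,W^*)$ the integrand $\langle\lambda_e(\omega),e(x_1,x_2;\omega)-u_e(\omega)\rangle_{W^*,W}$ is normal and integrable by \eqref{eq:growth-condition-constraints}, and $F_e$ is the intersection of the resulting weakly* closed hyperplanes (both signs of $\lambda_e$). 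Second, your Aumann--Castaing step can be replaced by something elementary: since $R^*$ is separable, pick a countable dense subset $\{\lambda^j\}$ of $K^\oplus\cap B_1(0)$; the bad set $\{\omega: i(x_1,x_2(\omega);\omega)-u_i(\omega)\notin -K\}$ equals $\bigcup_j\{\omega:\langle\lambda^j, i-u_i\rangle>0\}$, so if it has positive measure some $A_j:=\{\omega:\langle\lambda^j, i-u_i\rangle>0\}$ does, and $\lambda_i:=\lambda^j\mathbbm{1}_{A_j}$ already violates the corresponding $G_\lambda$. With these repairs your proof is complete.
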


\begin{proof}
Let $X' = X_1^* \times L^1(\Omega,X_2^*)$, which can be paired with $X$.
Let $Y:=X \times U$ and denote the pairing on $Z:=X' \times \Lambda$ by
\begin{equation}
\label{eq:product-dual-pairing}
\langle y,z\rangle_{Y,Z}:= \langle x,x'\rangle_{X,X'} + \langle u,\lambda \rangle_{U,\Lambda}.
\end{equation}  
Since $Y=Z^*$, the topology induced by the pairing \eqref{eq:product-dual-pairing} coincides with the weak$^*$ topology on $Y$. We define
  $\varphi_1(x) = j(x) + \delta_C(x_1)$ and
\begin{align*}
&\varphi_2(\u,\y,u;\omega) = \begin{cases}
0, & \text{if }  e(\u,\y;\omega) =u_e,  i(\u, \y; \omega) \leq_K u_i,\\
\infty, & \text{otherwise}.
\end{cases}
\end{align*}
Obviously, $\varphi(x,u) = \varphi_1(x) +\E[ \varphi_2(\u,\y(\cdot),u(\cdot))]$.
Let $\langle \cdot,\cdot \rangle_{Y',Z'}$ denote the pairing of
$Y':=X_1 \times X_2 \times (W\times R)$ with
$Z':=X_1^* \times X_2^* \times (W^* \times R^*)$; then the conjugate
integrand to $\varphi_2$ is given by
\begin{equation*}
  \varphi_2^*(z';\omega) = \sup_{y' \in Y'} \{ \langle y',z' \rangle_{Y',Z'} - \varphi_2(y';\omega)\}.
\end{equation*}

Defining $h(y';\omega)= 0$ for $y'= (\u,\y,u)$ we have
$h(y';\omega) \leq \varphi_2(y';\omega)$ a.s. The function $h$ is trivially a normal convex integrand that is integrable on
$X \times U$. Thus with the conjugate
integrand $h^*$, $I_h$ and $I_{h^*}$ are conjugate to each other by
\Cref{lemma:duality-conjugate-functions}, meaning that
$I_{h^*} \not\equiv \infty.$ 

Since $h \leq \varphi_2$ we have $h^* \geq \varphi_2^*$,
and hence there exists a point
$z \in Z$ such that $I_{\varphi_2^*}(z) < \infty$.
Since there clearly exists a point such that $I_{\varphi_2}$ is finite, it
follows that $I_{\varphi_2}$ and $I_{\varphi_2^*}$ are conjugate to one another and are weakly$^*$ lower semicontinuous; see \Cref{remark:weak-star-lsc}.

Notice that $J_1+\delta_C$ is weakly* lower semicontinuous, since it is weakly lower semicontinuous with respect to the natural pairing on the reflexive space $X_1$. Hence, together with \Cref{prop:weak-weak*-lsc}, we have weak* lower semicontinuity of $\varphi_1$. 
\end{proof}

\begin{theorem}
\label{thm:minP-supD}
Let \Cref{assumption:general-problem} and \Cref{subasu:existence-saddle-points-i} be satisfied. Then
\begin{equation*}
  -\infty < \min \textup{P} = \sup \textup{D},
\end{equation*}
meaning that the primal problem~\eqref{eq:model-problem-abstract-long} attains its minimum, and the minimal
value coincides with the supremum of the dual problem \eqref{eq:dual-problem}, which need not be attained.
\end{theorem}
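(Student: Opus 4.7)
My plan is to prove the theorem in two stages: first establish attainment of the primal infimum via the direct method of calculus of variations, then deduce strong duality $\min \textup{P} = \sup \textup{D}$ via perturbation analysis of the value function $v$.

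For attainment, the approach is to take a minimizing sequence $\{x^n\} \subset F_{\text{ad}, 0}$ with $j(x^n) \to \inf \textup{P}$. Under \Cref{subasu:existence-saddle-points-i}, this sequence must be bounded in $X$: either directly from boundedness of $F_{\text{ad}, 0}$, or via radial unboundedness of $j$ combined with bounded values $\{j(x^n)\}$. Reflexivity of $X_1$ and the identification $L^\infty(\Omega, X_2) = (L^1(\Omega, X_2^*))^*$ afforded by separability and reflexivity of $X_2$ then let me apply Banach--Alaoglu to extract a subsequence $x^{n_k} \rightharpoonup^* \bar{x}$. The weak* lower semicontinuity of $\varphi$ proven in \Cref{lemma:lsc-conjugate-function} would give $\varphi(\bar{x}, 0) \leq \liminf_{k} \varphi(x^{n_k}, 0) = \inf \textup{P}$, so $\bar{x} \in F_{\text{ad}, 0}$ and $j(\bar{x}) = \min \textup{P}$. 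The finiteness $\min \textup{P} > -\infty$ would follow from the growth condition \eqref{eq:growth-condition-J2} together with continuity of $\cR$ on $L^p(\Omega)$ and of $J_1$ on $X_1$, which imply $j$ is bounded on bounded subsets of $X$.

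For strong duality, I would invoke the standard convex-perturbation framework: the Lagrangian $L$ is constructed so that $\varphi(x, u) = \sup_{\lambda \in \Lambda}\{L(x, \lambda) - \langle \lambda, u\rangle_{\Lambda, U}\}$, and after interchanging $\inf$ and $\sup$ one obtains the identity $\sup \textup{D} = v^{**}(0)$, where $v^{**}$ is the biconjugate of $v$ with respect to the pairing $(U, \Lambda)$. Since weak duality gives $\sup \textup{D} \leq v(0) = \min \textup{P}$, it would suffice to prove the Fenchel--Moreau equality $v(0) = v^{**}(0)$, which holds provided $v$ is convex (immediate from joint convexity of $\varphi$) and weakly* lower semicontinuous at $0$. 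To establish the latter I would re-run the direct method on a sequence $u_n \rightharpoonup^* 0$ with $v(u_n) \to \alpha < \infty$: pick $\varepsilon_n$-optimizers $x^n \in F_{\text{ad}, u_n}$ with $j(x^n) \leq v(u_n) + \varepsilon_n$, derive boundedness of $\{x^n\}$ from \Cref{subasu:existence-saddle-points-i}, extract a weak* convergent subsequence $x^{n_k} \rightharpoonup^* \bar{x}$, and apply \Cref{lemma:lsc-conjugate-function} to conclude $v(0) \leq \varphi(\bar{x}, 0) \leq \liminf_k \varphi(x^{n_k}, u_{n_k}) \leq \alpha$.

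The main obstacle will be reconciling the norm neighborhood used in \Cref{subasu:existence-saddle-points-i} with the weak* topology on $U$ required for the perturbation duality. The radial-unboundedness branch handles this cleanly since $j(x^n) \leq v(u_n) + \varepsilon_n$ forces norm-boundedness of $\{x^n\}$ independently of the topology on $\{u_n\}$. The bounded-feasible-set branch relies additionally on the weak* metrizability of norm-bounded subsets of $U = \Lambda^*$, which holds because $\Lambda$ is separable (a consequence of the reflexivity and separability assumptions on $W$ and $R$), together with the observation that the sublevel sets $\{u : v(u) \leq \alpha\}$ remain uniformly norm-bounded, so the sequential argument goes through.
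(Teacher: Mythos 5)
Your first half (attainment of $\min \textup{P}$ via the direct method, resting on the weak* lower semicontinuity of $\varphi$ from \Cref{lemma:lsc-conjugate-function}, Banach--Alaoglu, and boundedness of $j$ below on bounded sets) is sound and is in the same spirit as the paper, which delegates both halves to \cite[Theorem 3.4 and Corollary 3.5]{Geiersbach2020+} after supplying exactly that lower semicontinuity. The gap lies in the duality half. The Fenchel--Moreau step $v(0)=v^{**}(0)$ requires lower semicontinuity of $v$ at $0$ with respect to the topology $\sigma(U,\Lambda)$ of the pairing, i.e. $(\operatorname{cl}v)(0)=\sup_{V\ni 0}\inf_{u\in V}v(u)$ over weak* neighborhoods $V$; this is a statement about nets, and sequential weak* lower semicontinuity is strictly weaker on the non-metrizable space $U=L^\infty(\Omega,W)\times L^\infty(\Omega,R)$. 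Your proposed repair --- that the sublevel sets $\{u:v(u)\le\alpha\}$ are norm-bounded, so that one may restrict to a weak*-metrizable ball --- is unjustified and false in general: perturbations $u_i$ that slacken the conical constraint enlarge $F_{\textup{ad},u}$ and typically leave $v(u)$ bounded above along norm-unbounded directions.

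Moreover, even after replacing sequences by nets, the branch of \Cref{subasu:existence-saddle-points-i} in which $F_{\textup{ad},u}$ is bounded only for $u$ in a neighborhood of zero does not close as you describe: that neighborhood is a norm neighborhood, whereas a net $u_\nu \rightharpoonup^* 0$ entering $(\operatorname{cl}v)(0)$ need never become norm-small, so your $\varepsilon_\nu$-optimizers $x^\nu\in F_{\textup{ad},u_\nu}$ need not be bounded and no weak* compact set captures them. (In the radially unbounded branch the bound $j(x^\nu)\le v(u_\nu)+\varepsilon_\nu$ gives inf-compactness of $\varphi(\cdot,u)$ uniformly in $u$, so there the net argument does go through.) This is precisely where the paper leans on \cite{Geiersbach2020+}: one first observes that all points of $F_{\textup{ad},u}$ for norm-small $u$ lie in fixed closed, convex, bounded sets $C_1\subset X_1$ and $C_2\subset X_2$, and then invokes a duality theorem formulated for a problem carrying $x_1\in C_1$ and $x_2(\omega)\in C_2$ a.s. as explicit constraints, so that the compactness needed for the value-function argument is available uniformly in all perturbations. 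Some such device --- or a direct proof that $0$ does not lie in the $\sigma(U,\Lambda)$-closure of $\{u:v(u)\le\alpha\}$ for $\alpha<v(0)$ --- is needed to complete your bounded-feasible-set branch.
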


\begin{proof}
If $F_{\textup{ad},u}$ is bounded for all $u$ in a neighborhood of zero, then there exist closed, convex, and bounded sets $C_1\subset X_1$ and $C_2 \subset X_2$ such that if $x \in F_{\textup{ad},u}$, then $x_1 \in C_1$ and $x_2(\omega) \in C_2$ a.s. Then the statement follows by \cite[Theorem 3.4]{Geiersbach2020+}. Otherwise if $j$ is radially unbounded, then the statement follows by \cite[Corollary 3.5]{Geiersbach2020+}. In both cases, we take advantage of the weak* lower semicontinuity of $\varphi$ as proven in \Cref{lemma:lsc-conjugate-function}.
\end{proof}

We now define the extended dual problem by
\begin{equation}\tag{$\bar{\textup{D}}$}
\label{eq:dual-tilde-problem}
\underset{(\lambda,\lambda^\circ) \in \Lambda \times \Lambda^\circ}{\text{maximize}}  \left\lbrace \bar{g}(\lambda,\lambda^\circ) := \inf_{x \in X} \bar{L}(x,\lambda,\lambda^\circ) \right\rbrace.
\end{equation}

Clearly, $\bar{g}(\lambda,0) = g(\lambda)$ and thus
$\sup \textup{D} \leq \sup \bar{\textup{D}}.$
Additionally, $ \sup \bar{\textup{D}} \leq \inf \textup{P}$, since
by~\eqref{eq:feasibility-negative-Lagrangian-term-1}, we have
\begin{align*}
\sup_{(\lambda,\lambda^\circ) \in \Lambda \times \Lambda^\circ} \bar{g}(\lambda,\lambda^\circ) &= \sup_{(\lambda,\lambda^\circ) \in \Lambda \times \Lambda^\circ} \inf_{x \in X} \{ L(x,\lambda) + L^\circ(x,\lambda^\circ) \} \\
& \leq \inf_{x \in X} \sup_{(\lambda,\lambda^\circ) \in  \Lambda \times \Lambda^\circ}  \{ L(x,\lambda)  + L^\circ(x,\lambda^\circ) \}. 
\end{align*}

\begin{theorem}
\label{thm:infP-maxDbar}
Let \Cref{assumption:general-problem} and \Cref{subasu:existence-saddle-points-ii} be satisfied. Then 
\begin{equation}
\label{eq:infP-equal-maxD-tilde}
\inf \textup{P} = \max \bar{\textup{D}} < \infty,
\end{equation}
meaning that the dual problem \eqref{eq:dual-tilde-problem} attains its maximum and the maximum value coincides with the infimum of the primal problem~\eqref{eq:model-problem-abstract-long}, which needs not be attained.
\end{theorem}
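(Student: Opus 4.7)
Weak duality $\sup\bar{\mathrm{D}}\le\inf\mathrm{P}$ was already observed above, so the task is to construct $(\bar\lambda,\bar\lambda^{\circ})\in\Lambda_0\times\Lambda_0^{\circ}$ attaining $v(0)=\inf\mathrm{P}$. The strategy is a standard perturbation/duality argument applied to the value function $v$ at the origin, with the Yosida--Hewitt decomposition (\Cref{thm:ioffe-levin}) used to split the resulting subgradient into its integrable and singular components. Joint convexity of $\varphi$ on $X\times U$---convexity of $j$ follows from \Cref{subasu:general-ii}, \Cref{subasu:general-iii}, and \Cref{subasu:general-vi} (convexity of $J_1$, convexity of $J_2$, and monotonicity and convexity of $\mathcal R$), while linearity of $e$ and $K$-convexity of $i$ come from \Cref{subasu:general-iv}---yields convexity of $v$. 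Combined with \Cref{subasu:existence-saddle-points-ii}, $v$ is locally bounded above at $0$; assuming $v(0)>-\infty$ (otherwise weak duality makes \eqref{eq:infP-equal-maxD-tilde} trivially true), standard convex analysis (e.g.\ \cite[Proposition 2.126]{Bonnans2013}) yields continuity of $v$ at $0$, so that $\partial v(0)\neq\emptyset$. Pick any $\bar z=(\bar z_e,\bar z_i)\in\partial v(0)\subset U^{*}$.

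From $\bar z\in\partial v(0)$ one has $v(0)=\inf_{x,u}\{\varphi(x,u)-\langle\bar z,u\rangle\}$. For fixed $x\in X_0$, feasibility requires $u_e=e(x)$ a.s.\ and $u_i=i(x)+s$ with $s\in L^{\infty}_K(\Omega,R):=\{s\in L^\infty(\Omega,R):s(\omega)\in K\text{ a.s.}\}$, and minimizing out the free $u$ yields
\begin{equation*}
v(0)=\inf_{x\in X_0}\Big\{j(x)-\langle\bar z_e,e(x)\rangle-\langle\bar z_i,i(x)\rangle-\sup_{s\in L^{\infty}_K}\langle\bar z_i,s\rangle\Big\}.
\end{equation*}
Since $L^{\infty}_K$ is a cone the trailing supremum is $0$ or $+\infty$; finiteness forces it to be $0$ and $\bar z_i$ to lie in the negative polar of $L^{\infty}_K$. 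Applying \Cref{thm:ioffe-levin} to $\bar z_e$ and $\bar z_i$, I would write $\bar z_e=-(\bar\lambda_e^a+\bar\lambda_e^{\circ})$ and $\bar z_i=-(\bar\lambda_i^a+\bar\lambda_i^{\circ})$, with the absolutely continuous parts identified via \eqref{eq:absolutely-continuous-functionals} with $\bar\lambda_e\in L^1(\Omega,W^*)$, $\bar\lambda_i\in L^1(\Omega,R^*)$, and $\bar\lambda_e^{\circ},\bar\lambda_i^{\circ}$ singular. Substitution then turns the displayed expression into $\inf_{x\in X_0}\bar L(x,\bar\lambda,\bar\lambda^{\circ})=\bar g(\bar\lambda,\bar\lambda^{\circ})$, so upon verifying $(\bar\lambda,\bar\lambda^{\circ})\in\Lambda_0\times\Lambda_0^{\circ}$ we conclude $\inf\mathrm{P}=v(0)=\bar g(\bar\lambda,\bar\lambda^{\circ})\le\sup\bar{\mathrm{D}}\le\inf\mathrm{P}$ and the maximum is attained at $(\bar\lambda,\bar\lambda^{\circ})$.

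The admissibility verification decomposes the polar condition $\langle\bar z_i,s\rangle\le0$ for every $s\in L^{\infty}_K$ into a pointwise condition on the integrable part and a cone condition on the singular part: testing with $s(\omega)=\mathbf 1_A(\omega)k$ for $A\in\mathcal F$ and $k\in K$ together with a measurable selection argument produces $\bar\lambda_i(\omega)\in K^{\oplus}$ a.s., while testing with $s_0\mathbf 1_{F_n}$, where $\{F_n\}$ is the concentration sequence from \Cref{def:singular-functionals}, and passing $n\to\infty$ via dominated convergence on the absolutely continuous contribution yields $\langle\bar\lambda_i^{\circ},s_0\rangle\le0$ for every $s_0\in L^{\infty}_K$, i.e.\ $\bar\lambda_i^{\circ}\in\mathcal K^{\oplus}$. \textbf{The main obstacle} is precisely this clean decoupling: the integrable half is routine, but the singular half hinges on the concentration property of $\Lambda^{\circ}$ together with a delicate limiting argument to annihilate the absolutely continuous coupling term as $n\to\infty$. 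The remaining points---joint convexity of $\varphi$, continuity of $v$ at $0$, and the algebraic substitution step identifying the dual with $\bar L$---are standard once the decomposition is in hand.
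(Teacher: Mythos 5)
Your proposal is correct and follows essentially the same route as the paper: the paper's proof consists of the single observation that \Cref{subasu:existence-saddle-points-ii} makes the convex value function $v$ bounded above near the origin and then cites \cite[Theorem 17]{Rockafellar1974}, which is exactly the perturbational argument (subdifferentiability of $v$ at $0$, a maximizer of $\bar{\textup{D}}$ being an element of $-\partial v(0)\subset U^*$, split via \Cref{thm:ioffe-levin}) that you carry out by hand. The only step worth flagging is the passage from $0\in\textup{int}\,\textup{dom}\,v$ to local boundedness above of $v$ at $0$, which you assert exactly as the paper does, so you are on equal footing there.
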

\begin{proof}
 \Cref{subasu:existence-saddle-points-ii} guarantees that $v$ is bounded above on a neighborhood of zero, so \eqref{eq:infP-equal-maxD-tilde}  follows by, e.g.,~\cite[Theorem 17]{Rockafellar1974}.
\end{proof}

\begin{theorem}
\label{thm:infP-maxD}
Let \Cref{assumption:general-problem}, \Cref{subasu:existence-saddle-points-ii}, and \Cref{assumption:relatively-complete-recourse} be satisfied. Then 
\begin{equation*}
  \inf{\textup{P}}=\max{\textup{D}} < \infty,
\end{equation*}
meaning that the dual problem~\eqref{eq:dual-problem} attains its maximum and the maximum value coincides with the infimum of the primal problem~\eqref{eq:model-problem-abstract-long}, which needs not be attained.
\end{theorem}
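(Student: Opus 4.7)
The plan is to bootstrap from \Cref{thm:infP-maxDbar} by showing that under relatively complete recourse the singular component $\bar{\lambda}^\circ$ of an optimal extended-dual pair can be discarded, so that its regular part $\bar{\lambda}$ already attains $\sup\textup{D}$. The starting observation is the standard chain
\begin{equation*}
\sup\textup{D}\;\leq\;\sup\bar{\textup{D}}\;=\;\inf\textup{P}\;<\;\infty,
\end{equation*}
where the equality is \Cref{thm:infP-maxDbar}. Thus it suffices to exhibit some $\bar{\lambda}\in\Lambda_0$ with $g(\bar{\lambda})\geq\inf\textup{P}$.

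Pick a maximizer $(\bar{\lambda},\bar{\lambda}^\circ)\in\Lambda_0\times\Lambda_0^\circ$ of $\bar{g}$ provided by \Cref{thm:infP-maxDbar}. Applying \Cref{lemma:technical-for-singular-terms} and the convention that $\bar{L}=\infty$ outside $X_0$, I would rewrite
\begin{equation*}
\inf\textup{P}\;=\;\bar{g}(\bar{\lambda},\bar{\lambda}^\circ)\;=\;\inf_{x\in X_0}\bar{L}(x,\bar{\lambda},\bar{\lambda}^\circ)\;=\;\inf_{x\in X_0}\bigl\{L(x,\bar{\lambda})+\ell(x_1,\bar{\lambda}^\circ)\bigr\}.
\end{equation*}

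The crucial step is to invoke \Cref{assumption:relatively-complete-recourse}: for every $x_1\in C$ there exists $x_2'\in L^\infty(\Omega,X_2)$ with $e(x_1,x_2'(\omega);\omega)=0$ and $i(x_1,x_2'(\omega);\omega)\leq_K 0$ almost surely. The equality annihilates the $\bar{\lambda}_e^\circ$-pairing, while $\bar{\lambda}_i^\circ\in\mathcal{K}^\oplus$ paired against $-i(x_1,x_2'(\cdot);\cdot)\in L^\infty(\Omega,R)$ with a.s.\ values in $K$ yields $\langle\bar{\lambda}_i^\circ,i(x_1,x_2'(\cdot);\cdot)\rangle_{(L^\infty(\Omega,R))^*,L^\infty(\Omega,R)}\leq 0$. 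Hence $L^\circ((x_1,x_2'),\bar{\lambda}^\circ)\leq 0$ and, by definition of the infimum \eqref{eq:definition-singular-functional},
\begin{equation*}
\ell(x_1,\bar{\lambda}^\circ)\;\leq\;0\qquad\text{for every }x_1\in C.
\end{equation*}

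Substituting this back gives
\begin{equation*}
\inf\textup{P}\;=\;\inf_{x\in X_0}\bigl\{L(x,\bar{\lambda})+\ell(x_1,\bar{\lambda}^\circ)\bigr\}\;\leq\;\inf_{x\in X_0}L(x,\bar{\lambda})\;=\;g(\bar{\lambda})\;\leq\;\sup\textup{D}\;\leq\;\inf\textup{P},
\end{equation*}
so all inequalities are equalities, $\bar{\lambda}\in\Lambda_0$ attains the supremum, and $\max\textup{D}=\inf\textup{P}<\infty$. The main obstacle I anticipate is verifying the sign relations on the singular pairings, in particular confirming that feasibility of $(x_1,x_2')$ is enough to force $\langle\bar{\lambda}_e^\circ,e(x_1,x_2'(\cdot);\cdot)\rangle=0$ (which does not immediately follow from singularity alone, but from the fact that $e(x_1,x_2'(\cdot);\cdot)$ is the \emph{zero} element of $L^\infty(\Omega,W)$, on which every functional vanishes); everything else is a direct chain of inequalities.
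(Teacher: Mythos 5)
Your proposal is correct, and its skeleton coincides with the paper's: both proofs reduce the claim to showing $\bar{g}(\lambda,\lambda^\circ)\leq g(\lambda)$ on $\Lambda_0\times\Lambda_0^\circ$, both invoke \Cref{lemma:technical-for-singular-terms} to replace $L^\circ$ by $\ell(x_1,\cdot)$, and both extract the sign information $\ell(x_1,\bar{\lambda}^\circ)\leq 0$ for $x_1\in\tilde{C}\supset C$ from relatively complete recourse exactly as you do (the equality constraint makes $e(x_1,x_2'(\cdot);\cdot)$ the zero element of $L^\infty(\Omega,W)$, and $\bar{\lambda}_i^\circ\in\mathcal{K}^\oplus$ handles the conical term). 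Where you genuinely diverge is the final step: the paper passes through Fenchel's duality theorem applied to $h(x_1)=\inf_{x_2}L(x,\lambda)+\delta_C(x_1)$ and $k(x_1)=-\ell(x_1,\lambda^\circ)$, producing a maximizer $x_1^*\in X_1^*$ and then chaining the conjugate inequalities to conclude $h(x_1)\geq\bar{g}(\lambda,\lambda^\circ)$ pointwise on $C$; you instead observe that $\ell(x_1,\bar{\lambda}^\circ)\leq 0$ on $C$ already gives $L(x,\bar{\lambda})+\ell(x_1,\bar{\lambda}^\circ)\leq L(x,\bar{\lambda})$ for every $x\in X_0$, so the inequality of infima is immediate by monotonicity. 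Your route is shorter and avoids the regularity/qualification hypotheses one would need to verify for Fenchel duality with attainment; the paper's route delivers the same conclusion (the pointwise bound $h(x_1)\geq\bar{g}$ is equivalent, after taking the infimum over $x_1\in C$, to what you prove) and does not appear to buy additional information needed for this theorem. One small bookkeeping point: your chain implicitly uses $g(\bar{\lambda})=\inf_{x\in X_0}L(x,\bar{\lambda})$ rather than the infimum over all of $X$; this is consistent with the convention $\bar{L}=\infty$ off $X_0$ and is exactly how the paper itself reads the dual function, so it is not a gap, but it is worth stating.
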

\begin{proof}
This proof follows exactly with the same arguments used in \cite[Theorem 3.8]{Geiersbach2020+}. We repeat them here to keep the paper self-contained.  \Cref{assumption:relatively-complete-recourse} implies 
\begin{equation}
\label{eq:relation-dual-extended-dual}
\bar{g}(\lambda,\lambda^\circ) \leq  g(\lambda) \quad \forall (\lambda,\lambda^\circ) \in \Lambda_0 \times \Lambda_0^\circ.
\end{equation}
Then in combination with \Cref{thm:infP-maxDbar}, this implies
\begin{equation*}
 \max \bar{\textup{D}} \le \sup \textup{D} \le \max \bar{\textup{D}},
\end{equation*}
and the proof is done since a solution $(\lambda, \lambda^\circ)$ of $(\bar{\textup{D}})$ gives a solution $\lambda$ of $(\textup{D})$.

To show~\eqref{eq:relation-dual-extended-dual}, let $(\lambda,\lambda^\circ) \in \Lambda_0 \times \Lambda_0^\circ$ be arbitrary. We skip the trivial case $\bar{g}(\lambda,\lambda^\circ) = -\infty$ and obtain by \Cref{lemma:technical-for-singular-terms} that
\begin{equation}
\label{eq:proof-infP-maxD-q}
\bar{g}(\lambda,\lambda^\circ) =\inf_{x \in X_0} \{ L(x,\lambda)  + \ell(\u,\lambda^\circ)\}.
\end{equation}

We now define
\begin{equation*}
  h(\u) = \begin{cases}
\underset{\y \in L^\infty(\Omega,X_2)}{\inf} L(x,\lambda) , & \text{if } \u \in C, \\
 \infty, & \text{else}
  \end{cases}
\end{equation*}
and
\begin{equation*}
  k(\u) = - \ell(\u,\lambda^\circ).
\end{equation*}
Notice that $\bar{g}(\lambda,\lambda^\circ) = \inf_{\u \in X_1} \{ h(\u) - k(\u)\}$.
Additionally, $h \not\equiv \infty$ is convex and $k>-\infty$ is concave.
In fact, since $\bar{g}$ is finite, $k$ cannot be identical to $\infty$
and $h$ must be proper. Therefore by Fenchel's duality theorem
(cf.~\cite[Theorem 6.5.6]{Aubin1990}), with
$h^*(v) = \sup_{\u \in X_1} \{ \langle v, \u \rangle_{X_1^*,X_1} - h(\u)\}$ and
$k^*(v) = \inf_{\u \in X_1} \{ \langle v,\u \rangle_{X_1^*,X_1} - k(\u)\}$, we have
\begin{equation}
\label{eq:dual-max-equality}
\bar{g}(\lambda,\lambda^\circ) = \max_{\u^* \in X_1^*} \{ k^*(\u^*) - h^*(\u^*)\}.
\end{equation}
Let $\u^*$ denote the maximizer of~\eqref{eq:dual-max-equality}, meaning
$\bar{g}(\lambda,\lambda^\circ) = k^*(\u^*) - h^*(\u^*).$
Then by definition of $h^*$, we have for all $\u \in X_1$ that
\begin{equation}
\label{eq:bound-h-star}
h(\u) - \langle \u^*,\u\rangle_{X_1^*,X_1} \geq \bar{g}(\lambda,\lambda^\circ) - k^*(\u^*).
\end{equation}
Likewise by definition of $k$ and $k^*$, we get
\begin{equation*}
  \ell(\u,\lambda^\circ) + \langle \u^*,\u\rangle_{X_1^*,X_1} \geq k^*(\u^*).
\end{equation*}
It is straightforward to see that $\ell(\u,\lambda^\circ) \leq 0$ for all
$\u \in \tilde{C}$. Indeed, $\u \in \tilde{C}$ implies that there
exists a $x_2 \in L^\infty(\Omega,X_2)$ satisfying $e(\u,\y(\omega),\omega)=0$ and
$i(\u,\y(\omega),\omega)\leq_K 0$ a.s. In particular $L^\circ(x,\lambda^\circ) \leq 0$ and we get
\begin{equation*}
  \langle \u^*,\u \rangle_{X_1^*,X_1} \geq k^*(\u^*)
\end{equation*} 
for all $\u \in \tilde{C} \supset C$.
From~\eqref{eq:bound-h-star} we thus have for all $\u \in C$ that 
$h(\u) \geq \bar{g}(\lambda,\lambda^\circ)$ holds, and hence 
\begin{equation*}
  L(x,\lambda)  \geq h(\u) \geq \bar{g}(\lambda,\lambda^\circ)
\end{equation*} 
for all $x \in X_0$ and all
$(\lambda,\lambda^\circ) \in \Lambda\times \Lambda^\circ$.
It follows that $g(\lambda) \geq \inf_{x \in X_0} L(x,\lambda)  \geq \bar{g}(\lambda,\lambda^\circ)$
and we have shown~\eqref{eq:relation-dual-extended-dual} finishing the proof.
\end{proof}

\section{Additional Proofs}
\label{sec:appendix-additional-proofs}
\begin{proof}[Proof of \Cref{lemma:concatenation-convex-functions}.]
By $K$-convexity of $f$, there exists $k \in K$ such that for all $\lambda \in [0,1]$ and all $y, \hat{y} \in Y$ 
\begin{equation*}
-f(\lambda y + (1-\lambda) \hat{y}) = k - \lambda f(y) -(1-\lambda) f(\hat{y}).
\end{equation*}
Therefore
\begin{align*}
g(-f(\lambda y + (1-\lambda) \hat{y})) & = g(k - \lambda f(y) -(1-\lambda) f(\hat{y}))\\
& \leq g(k) + g(- \lambda f(y) -(1-\lambda) f(\hat{y}))\\
& \leq \lambda g(-f(y)) + (1-\lambda)g(-f(\hat{y})),
\end{align*}
where we used convexity of $g$ for both inequalities. 
\end{proof}

\begin{lemma}
\label{lem:subdifferential-bounded-to-bounded}
Let $Y$ be a Banach space and suppose $f\colon Y \rightarrow \R$ is lower semicontinuous, convex, and Lipschitz continuous relative to every bounded subset of $Y$. Then $\partial f$ maps every bounded subset of $Y$ to a bounded set in $Y^*.$ 
\end{lemma}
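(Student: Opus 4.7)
The plan is to use the subdifferential inequality together with the local Lipschitz hypothesis in the most direct way. Given a bounded set $B \subset Y$, I would enlarge it to a bounded set $B' := B + \overline{B_1(0)}$ (the closed unit-ball neighborhood of $B$), which is still bounded. By hypothesis, $f$ is Lipschitz continuous on $B'$ with some constant $L = L(B') \geq 0$.

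Next, fix any $y \in B$ and any $q \in \partial f(y)$. For every $h \in Y$ with $\|h\|_Y \leq 1$, the point $y + h$ lies in $B'$, so the definition of the subdifferential gives
\begin{equation*}
\langle q, h \rangle_{Y^*,Y} = \langle q, (y+h) - y \rangle_{Y^*,Y} \leq f(y+h) - f(y) \leq L \|h\|_Y \leq L.
\end{equation*}
Taking the supremum over $\|h\|_Y \leq 1$ (using $h$ and $-h$) yields $\|q\|_{Y^*} \leq L$. Since $L$ depends only on $B$ (through $B'$) and not on the choice of $y \in B$ or $q \in \partial f(y)$, the set $\bigcup_{y \in B} \partial f(y)$ is bounded in $Y^*$ by $L$.

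There is no substantive obstacle here: the lower semicontinuity and convexity assumptions ensure the subdifferential is well-defined in the convex-analytic sense, and the local Lipschitz hypothesis is applied exactly where needed. The only minor point to be careful about is choosing the enlargement radius (here $1$) so that $y + h \in B'$ for all admissible $h$; any fixed positive radius works and only rescales the bound.
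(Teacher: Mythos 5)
Your proof is correct and follows essentially the same argument as the paper: the subdifferential inequality combined with the local Lipschitz estimate yields a uniform bound on $\lVert q \rVert_{Y^*}$. The only (cosmetic) difference is that you enlarge the bounded set by a unit ball and use the Lipschitz constant of the enlargement, whereas the paper works on a ball $B_\rho(0)$ and shrinks the test radius $\alpha$ so that $B_\alpha(y)$ stays inside; both give the same uniform bound.
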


\begin{proof}
We follow the arguments from \cite[Proposition 16.17]{Bauschke2011} to generalize to the Banach space $Y.$ Let $B_r(x)$ denote the closed ball of radius $r$ centered at $x$. We show that the subgradients of $f$ are uniformly bounded on every open ball centered at zero. Let $\rho >0$ and $\lambda_\rho$ be the Lipschitz constant of $f$ relative to $\text{int}B_{\rho}(0).$ Take any $y \in \text{int}B_{\rho}(0)$ and let $\alpha >0$ be such that $B_{\alpha}(y) \subset \text{int}B_{\rho}(0)$. Now, take $y^* \in \partial f(y).$ Then for all $z \in B_1(0)$, we have
\begin{equation*}
    \langle y^*, \alpha z\rangle_{Y^*,Y} \leq f(y+\alpha z) - f(y) \leq \lambda_\rho \lVert\alpha z\rVert_{Y} \leq \lambda_\rho \alpha.
\end{equation*}
Now, applying the supremum over $z \in B_1(0)$, we get $\alpha\lVert y^*\rVert_{Y^*} \leq \lambda_\rho \alpha$, from which we deduce $\lVert y^*\rVert_{Y^*} \leq \lambda_\rho$. Therefore, $\sup_{y \in \text{int}B_\rho(0)} \partial f(y) \leq \lambda_\rho.$
\end{proof}

\newpage

\section{Acknowledgments}
The second author acknowledges support by the German Research Foundation DFG through the collaborative research center SFB-TRR 154 under project B02.

\bibliographystyle{abbrvnat}
\bibliography{references}
\end{document}